\newtheorem{thm}{Theorem}[section]
\newtheorem{pro}[thm]{Proposition}
\newtheorem{lem}[thm]{Lemma}
\newtheorem{cor}[thm]{Corollary}
\theoremstyle{definition}
\newtheorem{defi}[thm]{Definition}
\begin{document}
\date{}
\title{\bf New code upper bounds  for the folded $n$-cube}
\author{Lihang Hou${}^1$\, \ Bo Hou${}^1$\, \ Suogang Gao${}^1$\thanks{Corresponding author. E-mail address: sggaomail@163.com.}\, \ Wei-Hsuan Yu${}^2$\\
{\footnotesize${}^1$ College of Mathematics and Information Science,  Hebei Normal University,
Shijiazhuang,  050024, P. R. China}\\
{\footnotesize${}^2$ Institute for Computational and Experimental Research in Mathematics, Brown University,  Providence, RI, 02903, USA}}
\maketitle
\begin{abstract}
Let $\Gamma$ denote a distance-regular graph. The maximum size of codewords with minimum distance at least $d$ is denoted by $A(\Gamma,d)$. Let $\square_n$ denote the folded $n$-cube $H(n,2)$. We give an upper bound on $A(\square_n,d)$ based on  block-diagonalizing the Terwilliger
algebra of $\square_n$ and on semidefinite programming.
The technique  of this paper is an extension of the
approach taken by A. Schrijver \cite{s} on the study of $A(H(n,2),d)$.
\end{abstract}
{\bf \em Key words:}  Code; Upper bounds; Terwilliger algebra; Semidefinite programming\\
{\bf  \em 2010 MSC:} 05C50, 94B65
\section{Introduction}
Let $\Gamma$ denote a distance-regular graph with vertex set $V\Gamma$, path-length distance function $\partial$ and diameter $D$. We call  any nonempty subset $C$ of $V\Gamma$ a code in $\Gamma$. For $1<|C|<|V\Gamma|$, the minimum distance
of $C$ is defined as $d:=\text{min}\{\partial(x,y)|x,y\in C,\ x\neq y\}$.  The maximum size of $C$ with minimum distance at least $d$ is denoted by $A(\Gamma,d)$. In general, the problem of determining $A(\Gamma,d)$ is difficult and hence any improved upper bounds are interesting enough for the researchers in this area. In \cite{s},  A. Schrijver introduced
 a new method based on block-diagonalizing the Terwilliger algebra of $H(n,2)$ and on semidefinite programming to give an upper bound on $A(H(n,2),d)$. This  method can be seen
as a refinement of Delsarte's linear programming approach \cite{d} and
the obtained new bound  is stronger than  the Delsarte  bound.
 In \cite{g1} these results were extended to the $q$-Hamming scheme  with $q\geq 3$. We refer the reader to \cite{g} for more details on this method.

Motivated by above works, in this paper we will consider the folded $n$-cube $H(n,2)$ which is  denoted  by $\square_n$. We first determine the Terwilliger algebra of $\square_n$ with respect to a fixed vertex. Then based on block-diagonalizing the Terwilliger algebra of $\square_n$  and on semidefinite programming, we give a new upper bound on $A(\square_n,d)$. This  bound strengthens the Delsarte
bound and  can be calculated in time polynomial in $n$ using semidefinite programming.

We now recall the definition of $\square_n$.
Let $S=\{1,2,\ldots, n\}$ with integer $n\geq 6$. It is known that each subset of $S$ is called the $support$ of  vertex of $H(n,2)$ and hence we can identify all  vertices of $H(n,2)$  with their support. Then the $Hamming\ distance$ of $u,v\subseteq S$ is equal to $|u\triangle v|$, where $u\triangle v=u\cup v-u\cap v$.
Denote by $X$  the set of all unordered pairs $(u,u')$,
where $u,u'\subseteq S$, $u\cap u'=\emptyset$, $u\cup u'=S$.
 $\square_n$ can be described as the graph
whose vertex set is $X$, two vertices, say $z:=(z_1,z_2),w:=(w_1,w_2)$, are adjacent whenever
$\text{min}\{|z_i\triangle w_j|: i,j=1,2\}=1$. Thus the path-length distance of $x:=(x_1,x_2)$ and $y:=(y_1,y_2)$ is given by
\begin{align}
\partial(x,y)=\text{min}\{|x_i\triangle y_j|: i,j=1,2\}.\nonumber
\end{align}
Observe that
$|x_1\triangle y_1|=|x_2\triangle y_2|$, $|x_1\triangle y_2|=|x_2\triangle y_1|$, and $|x_1\triangle y_1|+|x_1\triangle y_2|=n$. Then it follows that  $\partial(x,y)=\text{min}$$\{|x_1\triangle y_1|$,$|x_1\triangle y_2|\}$ and
$0\leq \partial(x,y)\leq \lfloor \frac{n}{2}\rfloor$, where $\lfloor a\rfloor$ denotes the maximal integer less than or equal to $a$.  It is well-known that $\square_n$ is a $bipartite$ (an $almost$-$bipartite$) distance-regular graph with diameter $\lfloor \frac{n}{2}\rfloor$  for even $n$ (odd $n$).

The paper is organized as follows.
In Section $2$, we recall some definitions and  facts concerning
the distance-regular graph and its Terwilliger algebra.  In Section $3$,  we give a basis of the Terwilliger algebra of $\square_n$ by considering the action of automorphism group of $\square_n$ on $X\times X\times X$. In Section $4$, we study a block-diagonalization of the Terwilliger algebra  via the obtained basis. In Section $5$, we estimate an upper bound on $A(\square_n,d)$ by semidefinite programming involving the block-diagonalization  of  the Terwilliger algebra. Moreover, we offer several concrete  upper bounds on $A(\square_n,d)$ for $8\leq n\leq 13$.
\section{Preliminaries}
Let $\Gamma$ denote a  distance-regular graph with vertex set $V\Gamma$, path-length distance function $\partial$, and diameter $D$. Let $V=\mathbb{C}^{V\Gamma}$ denote the $\mathbb{C}$-space of column vectors  with coordinates indexed by $V\Gamma$, and
let ${\rm{Mat}}_{V\Gamma}(\mathbb{C})$ denote the $\mathbb{C}$-algebra of matrices with rows and columns indexed by $V\Gamma$.

For $0\leq i\leq D$ let $A_i\in {\rm{Mat}}_{V\Gamma}(\mathbb{C})$  denote the $i$th {\it{distance matrix}} of $\Gamma$: $A_i$ has
$(x,y)$-entry equal to $1$ if $\partial(x,y)=i$ and $0$ otherwise.
 It is known that $A_0, A_1, \ldots, A_D$ span
a commutative subalgebra  of  ${\rm{Mat}}_{V\Gamma}(\mathbb{C})$, denoted by $\mathcal{M}$. It turns out that $\mathcal{M}$ can be generated by $A_1$.
We call $\mathcal{M}$ the
{\it{Bose-Mesner algebra}} of $\Gamma$. Fix a vertex $x\in V\Gamma$. For $0\leq i\leq D$ let diagonal matrix $E^*_i=E^*_i(x)$ denote $i$th {\it{dual idempotent}} of $\Gamma$: $E^*_i$ has
$(y,y)$-entry equal to $1$ if $\partial(x,y)=i$ and $0$ otherwise.
It is known that $E^*_0, E^*_1, \ldots, E^*_D$ span  a commutative subalgebra of ${\rm{Mat}}_X(\mathbb{C})$, denoted by $\mathcal{M}^*$. We call $\mathcal{M}^*$ the
{\it{dual Bose-Mesner algebra}} of $\Gamma$ with respect to $x$.

Let $T=T(x)$ denote the subalgebra of ${\rm{Mat}}_{V\Gamma}(\mathbb{C})$
generated by $\mathcal{M}$ and $\mathcal{M}^*$, and $T$ is called
the {\it Terwilliger algebra} of $\Gamma$ with respect to $x$. It is known that $T$ is semisimple and finite dimensional.
In what follows,
we  recall some terms about $T$-modules. A subspace $W\subseteq V$ is called  $T$-{\it{module}} if $YW\subseteq W$
for all $Y\in T$.  $W$ is said to be $irreducible$ whenever $W\neq 0$ and $W$ contains no $T$-modules besides $0$ and $W$. Assume $W$ is an irreducible $T$-module. By the {\it{endpoint}} of $W$ (resp. {\it{diameter}} of $W$), we
mean ${\rm{min}} \{i | 0\leq i\leq D, E^*_iW\neq 0\}$ (resp. $|\{i | 0\leq i\leq D, E_i^*W\neq 0\}|-1$).
$W$ is said to be $thin$ whenever $dim(E^*_iW)\leq 1$ for all $0\leq i\leq D$. Note that the standard module $V$ is an orthogonal direct sum of irreducible $T$-modules. By the multiplicity with
which $W$ appears in $V$, we mean the number of irreducible $T$-modules in this sum which
are isomorphic to $W$.
See \cite{cau,ca,p2,ter3} for more information on the Terwilliger algebra.
\begin{lem}{\rm (\cite[Lemma 3.9]{p2})}\label{lem1} Let $W$ denote an irreducible $T$-module with endpoint
 $r$ and diameter $d^*$. Then the following {\rm{(i)}--\rm{(iii)}} hold.
\begin{itemize}
\item[\rm(i)] $A_1E^*_iW\subseteq E^*_{i-1}W+E^*_iW+E^*_{i+1}W$\ $(0\leq i\leq D)$.
\item[\rm(ii)] $E^*_iW\neq 0$ if and only if $r\leq i\leq r+d^*$.
\item[\rm(iii)] $E^*_jA_1E^*_iW\neq 0$ if $|j-i|=1$\ \ $(r\leq i,j \leq d^*)$.
\end{itemize}
\end{lem}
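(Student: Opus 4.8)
The plan is to reduce all three parts to one elementary fact about the generators of $T$. Fix the base vertex $x$. Since $E_i^*$ and $E_j^*$ are diagonal, the $(y,z)$-entry of $E_i^*A_1E_j^*$ can be nonzero only when $\partial(x,y)=i$, $\partial(x,z)=j$ and $\partial(y,z)=1$, so the triangle inequality forces $|i-j|\le 1$; hence
\[
E_i^*A_1E_j^* = 0 \quad\text{whenever}\quad |i-j|\ge 2,
\]
with the convention $E_{-1}^*=E_{D+1}^*=0$. Part (i) is then immediate: using $\sum_{k=0}^{D}E_k^*=I$ and the relation above, $A_1E_i^*W=\sum_{k}E_k^*A_1E_i^*W=E_{i-1}^*A_1E_i^*W+E_i^*A_1E_i^*W+E_{i+1}^*A_1E_i^*W$, and because $W$ is a $T$-module each summand lies in the corresponding $E_k^*W$, giving $A_1E_i^*W\subseteq E_{i-1}^*W+E_i^*W+E_{i+1}^*W$.

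For (ii) the substance is that $S:=\{i:E_i^*W\neq 0\}$ is an interval of consecutive integers; granting this, $\min S=r$ by the definition of endpoint and $|S|=d^*+1$ by the definition of diameter, so $S=\{r,r+1,\dots,r+d^*\}$, which is the assertion. To see $S$ is an interval, suppose not: there is $j_0$ with $E_{j_0}^*W=0$ but $E_i^*W\neq 0\neq E_k^*W$ for some $i<j_0<k$. Put $W_1:=\sum_{m<j_0}E_m^*W$. Clearly $E_m^*W_1\subseteq W_1$ for all $m$; and by (i), $A_1E_m^*W\subseteq W_1$ for every $m<j_0$, the only term that could escape being $E_{m+1}^*W$ when $m+1=j_0$, which is $0$. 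Hence $A_1W_1\subseteq W_1$, and since $T$ is generated by $A_1$ and the $E_m^*$, $W_1$ is a $T$-module. It is nonzero ($E_i^*W\subseteq W_1$) and proper ($E_k^*W\neq 0$ while $E_k^*W_1=0$, the $E_m^*$ being mutually orthogonal projections), contradicting the irreducibility of $W$.

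Part (iii) uses the same splitting idea; it suffices to treat $j=i+1$ (with $r\le i<i+1\le r+d^*$), the case $j=i-1$ being handled symmetrically, splitting off $\sum_{m\ge i}E_m^*W$ instead of what follows. Assume $E_{i+1}^*A_1E_i^*W=0$ and set $W_1:=\sum_{m\le i}E_m^*W$. Then $W_1$ is $\mathcal{M}^*$-invariant; for $m<i$ part (i) gives $A_1E_m^*W\subseteq W_1$ directly, and for $m=i$ the exact decomposition $A_1E_i^*W=E_{i-1}^*A_1E_i^*W+E_i^*A_1E_i^*W+E_{i+1}^*A_1E_i^*W$ together with the hypothesis gives $A_1E_i^*W\subseteq E_{i-1}^*W+E_i^*W\subseteq W_1$. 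So $W_1$ is a $T$-module, nonzero by (ii) since $E_i^*W\neq 0$, and proper by (ii) since $E_{i+1}^*W\neq 0$ but $E_{i+1}^*W_1=0$ — contradicting the irreducibility of $W$.

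None of this is deep. The one place to be careful — and what I would regard as the main thing to get right, rather than a genuine obstacle — is the boundary bookkeeping in the two submodule verifications: pinning down exactly which of the three terms produced by (i) can leave the candidate submodule, and invoking the vanishing relation (for (ii)) or the vanishing hypothesis (for (iii)) at precisely that index.
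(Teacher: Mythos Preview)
Your proof is correct and follows the standard argument for this result. Note that the paper itself does not prove this lemma at all --- it is quoted from \cite[Lemma~3.9]{p2} without proof --- so there is nothing to compare against; your argument is essentially the one Terwilliger gives in that reference, relying on the tridiagonal relation $E_i^*A_1E_j^*=0$ for $|i-j|\ge 2$ together with the fact that $T$ is generated by $A_1$ and the dual idempotents.
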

\begin{lem}\label{lem2} Let $W$ denote a thin irreducible $T$-module with endpoint
 $r$ and diameter $d^*$. Pick a nonzero vector $\xi_0\in E^*_rW$, and let
$\xi_i=E^*_{r+i}A_1E^*_{r+i-1}A_1E^*_{r+i-2}\cdots E^*_{r+1}A_1E^*_{r}\xi_0$ $(1\leq i\leq d^*)$.
Then we have $\xi_i\in E^*_{r+i}W$ and $\xi_i$ is nonzero. Moreover,
$\xi_0,\xi_1,\ldots,\xi_{d^*}$ span $W$.
\end{lem}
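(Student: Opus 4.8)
The plan is to establish the three claims in turn---membership $\xi_i\in E^*_{r+i}W$, nonvanishing $\xi_i\neq 0$, and the spanning property---using only Lemma~\ref{lem1} together with the thinness hypothesis. Membership is immediate: since $A_1\in\mathcal{M}\subseteq T$ and every $E^*_j\in\mathcal{M}^*\subseteq T$, and $W$ is a $T$-module, each $\xi_i$ lies in $W$; and since $\xi_i$ is obtained by applying $E^*_{r+i}$ last, it lies in the image of $E^*_{r+i}$, hence $\xi_i\in E^*_{r+i}W$.

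For nonvanishing I would induct on $i$, the case $i=0$ being the hypothesis on $\xi_0$. Note first that $\xi_i=E^*_{r+i}A_1\xi_{i-1}$ for $1\le i\le d^*$, directly from the defining product (using $E^*_r\xi_0=\xi_0$). Assume $\xi_{i-1}\neq 0$. Since $1\le i\le d^*$ we have $r\le r+i-1\le r+d^*$, so $E^*_{r+i-1}W\neq 0$ by Lemma~\ref{lem1}(ii), and thinness forces $\dim E^*_{r+i-1}W=1$; therefore the nonzero vector $\xi_{i-1}$ spans $E^*_{r+i-1}W$. Consequently
\[
\mathbb{C}\,\xi_i=\mathbb{C}\,E^*_{r+i}A_1\xi_{i-1}=E^*_{r+i}A_1E^*_{r+i-1}W,
\]
and the right-hand side is nonzero by Lemma~\ref{lem1}(iii) applied to the pair of indices $r+i-1$ and $r+i$, which differ by $1$. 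Hence $\xi_i\neq 0$, completing the induction. A useful by-product is that for every $0\le i\le d^*$ the space $E^*_{r+i}W$ is exactly the line $\mathbb{C}\,\xi_i$, while $E^*_jW=0$ for all other $j$, again by Lemma~\ref{lem1}(ii).

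For the spanning statement, set $W'=\mathrm{span}\{\xi_0,\xi_1,\ldots,\xi_{d^*}\}$; I would show $W'$ is a $T$-module and then invoke irreducibility of $W$. Since $T$ is generated by $\mathcal{M}$ and $\mathcal{M}^*$, with $\mathcal{M}$ generated by $A_1$ and $\mathcal{M}^*$ spanned by the $E^*_j$, it suffices to check that $W'$ is stable under $A_1$ and under each $E^*_j$. Stability under $E^*_j$ is clear because $E^*_j\xi_i=\delta_{j,\,r+i}\,\xi_i$. For $A_1$: fix $0\le i\le d^*$; by Lemma~\ref{lem1}(i), $A_1\xi_i\in E^*_{r+i-1}W+E^*_{r+i}W+E^*_{r+i+1}W$, and by the previous paragraph each summand is $0$ or a scalar multiple of $\xi_{i-1}$, $\xi_i$, or $\xi_{i+1}$ respectively, all of which lie in $W'$. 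Thus $A_1W'\subseteq W'$, so $W'$ is a nonzero $T$-submodule of $W$, and irreducibility of $W$ gives $W'=W$.

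The step I expect to be the crux is the nonvanishing induction: it is exactly there that thinness is indispensable, since one needs $E^*_{r+i-1}W$ to be the single line $\mathbb{C}\,\xi_{i-1}$ in order to convert the nonvanishing of $E^*_{r+i}A_1E^*_{r+i-1}W$ furnished by Lemma~\ref{lem1}(iii) into the nonvanishing of the specific vector $\xi_i=E^*_{r+i}A_1\xi_{i-1}$. Everything else is bookkeeping with the dual idempotents.
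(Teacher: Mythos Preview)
Your proof is correct. The membership and nonvanishing arguments are essentially identical to the paper's: both use thinness together with Lemma~\ref{lem1}(ii),(iii) to run an induction showing each $\xi_i$ is a nonzero spanning vector of the line $E^*_{r+i}W$. The only difference is in how the spanning of $W$ is concluded. The paper simply observes that the $\xi_i$, lying in distinct $E^*_{r+i}W$, are linearly independent, and since thinness forces $\dim W=\sum_{i=0}^{d^*}\dim E^*_{r+i}W=d^*+1$, a dimension count gives $W=\mathrm{span}\{\xi_0,\ldots,\xi_{d^*}\}$. You instead verify that $W'=\mathrm{span}\{\xi_0,\ldots,\xi_{d^*}\}$ is itself a nonzero $T$-submodule (stability under $E^*_j$ and $A_1$ via Lemma~\ref{lem1}(i)) and invoke irreducibility. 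Your route avoids the explicit dimension count and would adapt to situations where one hasn't yet pinned down $\dim W$; the paper's route is a line shorter once one notes $\dim W=d^*+1$. Either argument is entirely adequate here.
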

\begin{proof}
It is easy to see that $\xi_i\in E^*_{r+i}W$. Since $W$ is thin, we have dim$(E^*_iW)=1$ for
$r\leq i\leq r+d^*$ by Lemma \ref{lem1}(ii). Then use Lemma \ref{lem1}(iii) to induct on $i$. We can have that each $\xi_i\ (1\leq i\leq d^*)$ is nonzero and hence $\xi_0,\xi_1,\ldots,\xi_{d^*}$ are linearly independent. It follows from $dim(W)=d^*+1$ that $W=span\{\xi_0,\xi_1,\ldots,\xi_{d^*}\}$.
\end{proof}
At end of this section, we recall some facts from number theory which are useful later.
\begin{lem}\label{lem 2.8} The following {\rm (i)--(iii)} hold.
\begin{itemize}
\item[\rm(i)] The number of nonnegative integer solutions to the equation  $x_1+x_2+\cdots +x_m=n$ is  ${n+m-1\choose m-1}$.
\item[\rm(ii)] $\sum_{k=0}^{n}(-1)^{k-m}{k\choose m}{n\choose k}=\delta_{m,n}$.
\item[\rm(iii)]  $\sum^m_{k=0}(-1)^{m-k}{m\choose k}{n-2m+k\choose n-i}={n-2m\choose i-m}$.
\end{itemize}
\end{lem}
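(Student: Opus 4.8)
All three identities are classical, so the plan is simply to record short self-contained derivations. For part (i) I would invoke the standard ``stars and bars'' bijection: a nonnegative integer solution $(x_1,x_2,\ldots,x_m)$ of $x_1+x_2+\cdots+x_m=n$ corresponds bijectively to a binary string consisting of $n$ ones and $m-1$ zeros, the $i$th block of consecutive ones having length $x_i$ and the zeros serving as separators; since such a string is determined by the positions of its $m-1$ zeros among $n+m-1$ slots, the count is $\binom{n+m-1}{m-1}$. (Alternatively one can induct on $m$, summing over the value of $x_m$ and using a hockey-stick identity $\sum_{t=0}^{n}\binom{t+m-2}{m-2}=\binom{n+m-1}{m-1}$.)

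For part (ii) I would apply the subset-of-a-subset identity $\binom{k}{m}\binom{n}{k}=\binom{n}{m}\binom{n-m}{k-m}$. Since $\binom{k}{m}=0$ for $k<m$, the sum equals $\binom{n}{m}\sum_{k=m}^{n}(-1)^{k-m}\binom{n-m}{k-m}$; re-indexing via $j=k-m$ turns the inner sum into $\sum_{j=0}^{n-m}(-1)^{j}\binom{n-m}{j}=(1-1)^{n-m}$, which is $1$ when $n=m$ and $0$ otherwise. Multiplying by $\binom{n}{m}$, which equals $1$ precisely when $n=m$, yields $\delta_{m,n}$ (and the case $m>n$ is trivial, both sides being $0$).

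For part (iii) I would read the left-hand side as an iterated forward difference. Writing $\Delta f(x)=f(x+1)-f(x)$, one has the identity $\sum_{k=0}^{m}(-1)^{m-k}\binom{m}{k}f(x+k)=(\Delta^{m}f)(x)$ for any function $f$, and for $f(x)=\binom{x}{\,n-i\,}$ Pascal's rule gives $\Delta f(x)=\binom{x}{\,n-i-1\,}$, hence $(\Delta^{m}f)(x)=\binom{x}{\,n-i-m\,}$. Evaluating at $x=n-2m$ gives $\binom{n-2m}{\,n-i-m\,}$, and the symmetry $\binom{a}{b}=\binom{a}{a-b}$ with $a=n-2m$ rewrites this as $\binom{n-2m}{\,i-m\,}$, as claimed. (If one prefers to avoid difference calculus, the same identity follows by induction on $m$ using $\binom{m}{k}=\binom{m-1}{k-1}+\binom{m-1}{k}$ together with $\binom{n-2m+k}{n-i}=\binom{n-2m+k-1}{n-i-1}+\binom{n-2m+k-1}{n-i}$.) None of the three parts presents a real obstacle; the only point requiring care is the index bookkeeping in (iii)---in particular remembering the final symmetry step that converts $\binom{n-2m}{\,n-i-m\,}$ into $\binom{n-2m}{\,i-m\,}$---and, in (ii), the observation that the terms with $k<m$ drop out so that the reindexed sum really is a full alternating binomial sum.
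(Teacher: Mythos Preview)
Your arguments for all three parts are correct. The paper itself does not prove this lemma at all: it simply introduces the three identities as ``some facts from number theory which are useful later'' and states them without justification. So there is no approach to compare against; your proposal supplies exactly the kind of short, standard verifications one would expect, and the bookkeeping (the reindexing in (ii), the final symmetry step $\binom{n-2m}{n-i-m}=\binom{n-2m}{i-m}$ in (iii)) is handled cleanly.
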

\section{The Terwilliger algebra of $\square_n$}
In this section, we give a basis of  the Terwilliger algebra of $\square_{n}$ with $n\geq 6$.
We treat two cases of $n$ even and odd separately.
\subsection{The Terwilliger algebra of $\square_{2D}$}
Recall the definition of  vertex set $X$ for $n=2D$ and  we can view $X$ as the  set  consisting of all ordered pairs $(u,u')$ with $|u|<|u'|$ and all unordered pairs $(u,u')$ with $|u|=|u'|$.
We  give  the following notation.
To each ordered triple $(x,y,z)\in X\times X\times X$, where
 $x=(x_1,x_2), y=(y_1,y_2), z=(z_1,z_2)$,
we associate the integers three-tuple $(i,j,t)$:
\begin{align*}
\partial(x,y,z):=(i,j,t),\ \ \ \ \ \ \ \ \ \  \text{where}\ \ i&:=\partial(x,y),\ \ \ \ \ \ \ \
\ \ \ \ \ \ \ \  \ \ \ \ \  \nonumber\\
 j&:=\partial(x,z),
\end{align*}
without loss of generality, let $|x_1\bigtriangleup y_1|=i$ and $|x_1\bigtriangleup z_1|=j$. Then
\begin{align}
&\text{for}\ 0\leq i,j\leq D-1,\  \ t:=|(x_1\bigtriangleup y_1) \cap  (x_1\bigtriangleup z_1)|,\nonumber \\
&\text{for}\ i=D,0\leq j\leq D-1,\ \ t:=\text{max}\{|(x_1\bigtriangleup y_1)\cap  (x_1\bigtriangleup z_1)|, |(x_1\bigtriangleup y_2)\cap  (x_1\bigtriangleup z_1)|\},\nonumber\\
&\text{for}\ 0\leq i\leq D-1,j=D,\ \ t:=\text{max}\{|(x_1\bigtriangleup y_1)\cap  (x_1\bigtriangleup z_1)|, |(x_1\bigtriangleup y_1)\cap  (x_1\bigtriangleup z_2)|\},\nonumber\\
&\text{for}\ i=j=D,\ \ t:=\text{max}\{|(x_1\bigtriangleup y_1)\cap  (x_1\bigtriangleup z_1)|, |(x_1\bigtriangleup y_1)\cap  (x_1\bigtriangleup z_2)|,\nonumber\\
 & \ \ \ \ \ \ \ \ \ \ \ \ \ \ \ \ \ \ \ \ \ \ \ \ \ \ \ \ \ \ \ \ \ \ |(x_1\bigtriangleup y_2)\cap  (x_1\bigtriangleup z_1)|, |(x_1\bigtriangleup y_2)\cap  (x_1\bigtriangleup z_2)|\}\nonumber\\
 &\ \ \ \ \ \ \ \ \ \ \ \ \ \ \ \ \ \ \ \ \ \ \  =\text{max}\{|(x_1\bigtriangleup y_1)\cap  (x_1\bigtriangleup z_1)|, |(x_1\bigtriangleup y_1)\cap  (x_1\bigtriangleup z_2)|\}.\nonumber
\end{align}
Observe that $0\leq t\leq i,j\leq D$, $t\geq \lfloor\frac{j+1}{2}\rfloor$ for $i=D$, and  $t\geq \lfloor\frac{i+1}{2}\rfloor$ for $j=D$. Note that $\partial(y,z)=\text{min}\{|y_1\bigtriangleup z_1|=|y_2\bigtriangleup z_2|, |y_1\bigtriangleup z_2|=|y_2\bigtriangleup z_1|\}.$ Then by  simple calculation, we have that $\partial(y,z)=\text{min}\{i+j-2t,\  2D-(i+j-2t)\}$ for $0\leq i,j\leq D-1$ and $\partial(y,z)=i+j-2t$ for $i=D$ or $j=D$.
The set of three-tuples
$(i,j,t)$ that occur as $\partial(x,y,z)=(i,j,t)$ for some $x,y,z\in X$ is given by
\begin{align}
\mathcal{I}:=\{(i,j,t)|\  &0\leq t\leq i,j\leq D,\  i+j-t\leq 2D-2, \nonumber\\
& t\geq \lfloor\frac{j+1}{2}\rfloor\ \text{if\ $i=D$ and}\ t\geq \lfloor\frac{i+1}{2}\rfloor\ \text{if\ $j=D$}\}. \label{eq36}
\end{align}
\begin{pro}\label{pro 3}
We have
\begin{equation}\nonumber
|\mathcal{I}|=\frac{(D+1)(D^2+2D+3)}{3}.
\end{equation}
\end{pro}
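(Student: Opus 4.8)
The plan is to partition $\mathcal{I}$ according to how many of the coordinates $i,j$ equal the diameter $D$, count the three pieces separately, and add. Write $\mathcal{I}=\mathcal{I}_0\cup\mathcal{I}_1\cup\mathcal{I}_2$ (a disjoint union), where $\mathcal{I}_0$ collects the triples of \eqref{eq36} with $i,j\le D-1$, $\mathcal{I}_1$ those with exactly one of $i,j$ equal to $D$, and $\mathcal{I}_2$ those with $i=j=D$. On $\mathcal{I}_0$ both ``floor'' conditions of \eqref{eq36} are vacuous and the inequality $i+j-t\le 2D-2$ holds automatically (since $t\ge 0$ and $i+j\le 2(D-1)$), so $\mathcal{I}_0=\{(i,j,t):0\le t\le\min\{i,j\},\ 0\le i,j\le D-1\}$. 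Writing $\min\{i,j\}+1=|\{k:0\le k\le i,\ k\le j\}|$ and summing over $k$ first gives
\[
|\mathcal{I}_0|=\sum_{k=0}^{D-1}|\{(i,j):k\le i\le D-1,\ k\le j\le D-1\}|=\sum_{k=0}^{D-1}(D-k)^2=\frac{D(D+1)(2D+1)}{6}.
\]

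For $\mathcal{I}_1$ the involution $(i,j,t)\mapsto(j,i,t)$ interchanges its two halves, so it suffices to count the triples with $i=D$, $0\le j\le D-1$ and double. For such a triple $0\le t\le i,j\le D$ forces $0\le t\le j$, the inequality $i+j-t\le 2D-2$ reads $t\ge j-D+2$, and the relevant floor condition reads $t\ge\lfloor(j+1)/2\rfloor$. Since $n\ge 6$ gives $D\ge 3$, a short check shows $\lfloor(j+1)/2\rfloor\ge\max\{0,\,j-D+2\}$ for every $0\le j\le D-1$, so $t$ runs over $\lfloor(j+1)/2\rfloor\le t\le j$, which by $\lfloor(j+1)/2\rfloor+\lfloor j/2\rfloor=j$ amounts to exactly $\lfloor j/2\rfloor+1$ choices. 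Likewise, on $\mathcal{I}_2$ the conditions collapse to $\max\{2,\lfloor(D+1)/2\rfloor\}\le t\le D$, and $\lfloor(D+1)/2\rfloor\ge 2$ precisely because $D\ge 3$, leaving $\lfloor D/2\rfloor+1$ choices. Hence $|\mathcal{I}_1|=2\sum_{j=0}^{D-1}(\lfloor j/2\rfloor+1)$ and $|\mathcal{I}_2|=\lfloor D/2\rfloor+1$.

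It remains to add the three counts, at which point the floor functions disappear: from $\lfloor j/2\rfloor+\lceil j/2\rceil=j$ and $\lceil j/2\rceil=\lfloor(j+1)/2\rfloor$ one gets $2\sum_{j=0}^{D-1}\lfloor j/2\rfloor+\lfloor D/2\rfloor=\sum_{j=0}^{D-1}j=\tfrac12 D(D-1)$, hence $|\mathcal{I}_1|+|\mathcal{I}_2|=2D+\tfrac12 D(D-1)+1$. Adding $|\mathcal{I}_0|$ and clearing the common denominator $6$,
\[
|\mathcal{I}|=\frac{D(D+1)(2D+1)+3D(D-1)+12D+6}{6}=\frac{2D^3+6D^2+10D+6}{6}=\frac{(D+1)(D^2+2D+3)}{3}.
\]

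I expect the only genuine obstacle to be the bookkeeping in the last two pieces: among the competing lower bounds $0$, $j-D+2$ and $\lfloor(j+1)/2\rfloor$ on $t$ one must pin down which one is binding — this is exactly where the standing hypothesis $D\ge 3$ enters — and then verify that the case-dependent floor expressions telescope to the stated polynomial. A slightly cleaner, equivalent route avoids floors entirely by substituting $t=\lfloor(j+1)/2\rfloor+s$ on $\mathcal{I}_1$ (and $t=\lfloor(D+1)/2\rfloor+s$ on $\mathcal{I}_2$), which turns each piece into a sum of the type evaluated in Lemma~\ref{lem 2.8}(i); I would adopt whichever version is shorter to typeset.
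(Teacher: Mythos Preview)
Your proof is correct. The route, however, differs from the paper's: you partition $\mathcal{I}$ according to how many of $i,j$ hit the diameter $D$, whereas the paper slices by the value $l=i+j-t$ and splits into the ranges $0\le l\le D$, $D+1\le l\le D+\lfloor D/2\rfloor$, $D+\lfloor D/2\rfloor+1\le l\le 2D-2$, invoking Lemma~\ref{lem 2.8}(i) (stars and bars) on each range and then subtracting off the triples violating the floor constraints when $i=D$ or $j=D$. Your decomposition is more elementary: the piece $\mathcal{I}_0$ reduces to a sum of squares, and the boundary pieces $\mathcal{I}_1,\mathcal{I}_2$ are handled by a single observation about which lower bound on $t$ is binding (this is exactly where you correctly use $D\ge 3$, and it is worth noting that the paper's argument also implicitly uses this hypothesis in its case (iii)). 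The floor bookkeeping you flag as the potential obstacle is carried out cleanly via the identity $2\sum_{j=0}^{D-1}\lfloor j/2\rfloor+\lfloor D/2\rfloor=\binom{D}{2}$. The paper's approach, by contrast, keeps the computation in terms of binomial sums throughout, which makes it mesh better with the parallel count of $|\mathcal{I}'|$ in Proposition~\ref{pro 1} but at the cost of a longer and less transparent calculation.
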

\begin{proof}
Let
\begin{equation}\label{eq35}
i+j-t=l\ \ (0\leq t\leq i,j\leq D,\ 0\leq l\leq 2D-2).
\end{equation}
We divide the proof into  three cases.\\
(i) the case: $0\leq l\leq D$. Substitute $i':=i-t$ and $j':=i-t$. Then the integer solutions of \eqref{eq35} are in bijection with the integer solutions of
\begin{equation}\label{eq6}
0\leq i',j', t\leq D,\  i'+j'+t= l.
\end{equation}
By Lemma \ref{lem 2.8}(i) the number of integer solutions of \eqref{eq6} is ${l+2\choose 2}$ and these solutions satisfy \eqref{eq36}.\\
(ii) the case: $D+1\leq l\leq D+\lfloor\frac{D}{2}\rfloor$. Substitute $i':=D-i$, $j':=D-j$ and $l':=2D-l$ Then the integer solutions of \eqref{eq35}
are in bijection with the integer solutions of
\begin{equation}\label{eq8}
0\leq i',j', t\leq D,\  i'+j'+t= l'.
\end{equation}
The number of integer solutions of \eqref{eq8} is ${l'+2\choose 2}={2D-l+2\choose 2}$. One easily verifies that when $i=D$ or $j=D$ in \eqref{eq35} there are total $2(l-D)$ integer solutions satisfying \eqref{eq8} but not satisfying \eqref{eq36}.\\
(iii) the case: $D+\lfloor\frac{D}{2}\rfloor+1\leq l\leq 2D-2$. By the argument similar to the discussion  of case (ii), we have that
the number of integer solutions satisfying \eqref{eq36} is  ${2D-l+2\choose 2}-2(2D-l)-1={2D-l\choose 2}$. Note that  when $i=D$ or $j=D$ in \eqref{eq35} there are total $2(2D-l)+1$ integer solutions not satisfying \eqref{eq36}.

Therefore,
\begin{align}
|\mathcal{I}|&=\sum^D_{l=0}{l+2\choose 2}+\sum^{D+\lfloor\frac{D}{2}\rfloor}_{l=D+1}\bigg({2D-l+2\choose 2}-2(l-D)\bigg)+\sum^{2D-2}_{l=D+\lfloor\frac{D}{2}\rfloor+1}{2D-l\choose 2}\nonumber\\
&=\frac{(D+1)(D+2)(D+3)}{6}+\frac{D(D+1)(D+2)}{6}-\frac{(D-\lfloor\frac{D}{2}\rfloor)
(D-\lfloor\frac{D}{2}\rfloor+1)(D-\lfloor\frac{D}{2}\rfloor+2)}{6}\nonumber\\
&\ \ \ \ \ \ \ \ \ \ \ \ \ \ \ \ \ \ \ \ \ \ \ \ \ \ \ \ \ \ \ \ \ \ \  -\lfloor\frac{D}{2}\rfloor(\lfloor\frac{D}{2}\rfloor+1)+\frac{(D-\lfloor\frac{D}{2}\rfloor-2)
(D-\lfloor\frac{D}{2}\rfloor-1)(D-\lfloor\frac{D}{2}\rfloor)}{6}\nonumber\\
&=\frac{(D+1)(D^2+2D+3)}{3}\nonumber.
\end{align}
\end{proof}
For each $(i,j,t)\in \mathcal{I}$, we  define
\begin{align}
X_{i,j,t}:=\{(x,y,z)\in \{X\times X\times X| \partial(x,y,z)=(i,j,t)\}.
\end{align}
Denote  by {\rm Aut}$(X)$ the automorphism group of $\square_{2D}$ and ${\rm Aut}_{\textbf{0}}(X)$ the stabilizer of vertex $\textbf{0}:=(\emptyset,S)$ in ${\rm Aut}(X)$.
The following proposition gives the meaning of $X_{i,j,t}$, $(i,j,t)\in \mathcal{I}$.
\begin{pro}\label{pro 15}
The sets $X_{i,j,t}$, $(i,j,t)\in \mathcal{I}$ are the orbits
of $X\times X\times X$ under the action
of {\rm Aut}$(X)$.
\end{pro}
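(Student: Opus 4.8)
The plan is to prove the two halves of the statement separately: each $X_{i,j,t}$ with $(i,j,t)\in\mathcal I$ is invariant under $\mathrm{Aut}(X)$, and $\mathrm{Aut}(X)$ acts transitively on it. Since (as recorded just above \eqref{eq36} and Proposition \ref{pro 3}) these sets are nonempty and partition $X\times X\times X$, this is precisely the assertion that they are the orbits. I would first write $\mathrm{Aut}(X)=\mathrm{Aut}(\square_{2D})$ down explicitly: because $2D\geq 6$ it is the group $\mathbb Z_2^{2D-1}\rtimes S_{2D}$, where a permutation $\sigma\in S_{2D}$ acts on supports by $(u_1,u_2)\mapsto(\sigma u_1,\sigma u_2)$ and, for $v\subseteq S$, the translation $\tau_v$ acts by $(u_1,u_2)\mapsto(u_1\triangle v,\,u_2\triangle v)$, with $\tau_v=\tau_{S\setminus v}$ and $\tau_v\tau_w=\tau_{v\triangle w}$. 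In particular $\tau_{x_1}$ maps any $x=(x_1,x_2)$ to $\mathbf 0=(\emptyset,S)$, and $\mathrm{Aut}_{\mathbf 0}(X)=S_{2D}$.

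For \emph{invariance} I would check that the whole triple $\partial(x,y,z)$ is preserved by the generators. Any $g\in\mathrm{Aut}(X)$ preserves graph distance, so $i=\partial(x,y)$ and $j=\partial(x,z)$ are unchanged. For the third entry $t$, observe that $\sigma$ commutes with $\triangle$, $\cap$, $\cup$ and preserves cardinalities of supports, while $\tau_v$ satisfies $(u_1\triangle v)\triangle(w_1\triangle v)=u_1\triangle w_1$ and hence fixes every symmetric difference of supports. Consequently each cardinality $|(x_1\triangle y_a)\cap(x_1\triangle z_b)|$ occurring in the definition of $t$ is preserved, and since $t$ is by definition the maximum of such cardinalities over the admissible labellings of the components of $y$ and $z$, it too is preserved. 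Hence $X_{i,j,t}$ is $\mathrm{Aut}(X)$-invariant, so every orbit is contained in some $X_{i,j,t}$.

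The substantive step is \emph{transitivity}. Given $(x,y,z),(x',y',z')\in X_{i,j,t}$, apply $\tau_{x_1}$ to the first triple and $\tau_{x_1'}$ to the second; by invariance the images stay in $X_{i,j,t}$, so we may assume $x=x'=\mathbf 0$ and it remains to produce $\sigma\in S_{2D}$ with $\sigma y=y'$ and $\sigma z=z'$. With $x=\mathbf 0$ the condition $\partial(\mathbf 0,y)=i$ says $i\in\{|y_1|,|y_2|\}$; I choose the labelling of $y$ so that $|y_1|=i$ (forced when $i<D$), likewise $|z_1|=j$, and then use whatever labelling freedom remains (present exactly when $i=D$ or $j=D$) to arrange $|y_1\cap z_1|=t$, which is possible precisely because $t$ is the corresponding maximum; do the same for $y',z'$. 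Now $(y_1,z_1)$ splits $S$ into the four blocks $y_1\cap z_1$, $y_1\setminus z_1$, $z_1\setminus y_1$, $S\setminus(y_1\cup z_1)$ of sizes $t$, $i-t$, $j-t$, $2D-(i+j-t)$, all nonnegative since $(i,j,t)\in\mathcal I$, and $(y_1',z_1')$ splits $S$ into blocks of the same four sizes. Any $\sigma\in S_{2D}$ carrying each block of the first partition onto the corresponding block of the second then satisfies $\sigma y_1=y_1'$ and $\sigma z_1=z_1'$, hence $\sigma y=y'$ and $\sigma z=z'$ (the remaining components being complements in $S$). Finally $\tau_{x_1'}^{-1}\sigma\,\tau_{x_1}\in\mathrm{Aut}(X)$ sends $(x,y,z)$ to $(x',y',z')$.

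I expect the one delicate point to be the labelling bookkeeping when $i=D$ and/or $j=D$: one must verify that the ambiguity in which component of $y$ (and of $z$) is the ``first'' can always be resolved so as to realize $|y_1|=i$, $|z_1|=j$ and $|y_1\cap z_1|=t$ simultaneously, which is exactly what the maxima in the definition of $t$ encode, and that automorphisms respect this resolution. Everything else then reduces to the elementary fact that two pairs of subsets of a $2D$-set with matching triple $(\,|{\cdot}|,|{\cdot}|,|{\cdot}\cap{\cdot}|\,)$ are equivalent under $S_{2D}$, together with the explicit description of $\mathrm{Aut}(\square_{2D})$.
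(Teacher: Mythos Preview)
Your proof is correct and follows essentially the same approach as the paper's: identify $\mathrm{Aut}(X)$ as $2^{2D-1}.\mathrm{sym}(2D)$, check invariance of $(i,j,t)$, reduce to $x=\mathbf 0$ via the translation action, and then use that $\mathrm{Aut}_{\mathbf 0}(X)=S_{2D}$ acts transitively on $X^{\mathbf 0}_{i,j,t}$. The paper's argument is considerably terser --- it merely asserts that ``one easily verifies'' the invariance of $t$ and that $S_{2D}$ ranging over $\mathrm{Aut}_{\mathbf 0}(X)$ makes $(\psi y',\psi z')$ range over $X^{\mathbf 0}_{i,j,t}$, whereas you supply the explicit four-block partition argument and the careful labelling bookkeeping for the $i=D$ and $j=D$ cases; this extra detail is exactly what the paper leaves implicit.
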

\begin{proof}
By \cite[p. 265]{bcn} the {\rm Aut}$(X)$  is $2^{2D-1}$.sym$(2D)$. Let $x,y,z\in X$ and let $\partial(x,y,z)=(i,j,t)$. By the definitions of $i,j$ and $t$, one easily  verifies  that
$i,j,t$ are unchanged under any action of  $\sigma \in {\rm Aut}(X)$, that is $\partial(\sigma x,\sigma y,\sigma z)=(i,j,t)$.

To show that {\rm Aut}$(X)$ acts transitively on $X_{i,j,t}$ for each $(i,j,t)\in \mathcal{I}$, it suffices to show that for fixed $\partial(x',y',z')=(i,j,t)$ if $\sigma\in {\rm Aut}(X)$  ranges over ${\rm Aut}(X)$ then $(\sigma x',\sigma y',\sigma z')$ ranges over $X_{i,j,t}$ . By permuting on $X$, we may assume that $x'=\textbf{0}$. Then  $\partial(\textbf{0},y',z')=(i,j,t)$. Since ${\rm Aut}_{\textbf{0}}(X)$ is sym$(2D)$, we have that if $\psi\in {\rm Aut}_{\textbf{0}}(X)$  ranges over the ${\rm Aut}_{\textbf{0}}(X)$ then $(\psi y',\psi z')$ ranges over
the set $\{(y,z)\in X\times X|\partial(\textbf{0},y,z)=(i,j,t)\}$.
\end{proof}

The action of ${\rm Aut}(X)$ on  $X\times X\times X$
induces an action of ${\rm Aut}_{\textbf{0}}(X)$ on $\{\textbf{0}\}\times X\times X$. Thus we define
\begin{align}
 X^\textbf{0}_{i,j,t}:=\{(x,y)\in X\times X| \partial(\textbf{0},x,y)=(i,j,t)\}.\nonumber
\end{align}

Observe that  $(x,y)\in X^\textbf{0}_{i,j,t}$ is equivalent to  $|x_1|=i,|y_1|=j$ and\\
\text{\ \ \ \ \ \ \ \ \ \ \ \ }$t=|x_1\cap  y_1|$ when \ $0\leq i,j\leq D-1$,\   \\
\text{\ \ \ \ \ \ \ \ \ \ \ \ }$t=\text{max}\{|x_1\cap  y_1|, |x_2\cap  y_1|\}$ when\ $i=D,\ 0\leq j\leq D-1$,\ \\
\text{\ \ \ \ \ \ \ \ \ \ \ \ }$t=\text{max}\{|x_1\cap  y_1|, |x_1\cap  y_2|\}$ when $0\leq i\leq D-1,\ j=D$,\  \\
\text{\ \ \ \ \ \ \ \ \ \ \ \ }$t=\text{max}\{|x_1\cap  y_1|=|x_2\cap  y_2|, |x_1\cap  y_2|=|x_2\cap  y_1|\}$ when $i=j=D$.
\begin{pro}\label{pro 16}
The sets {\rm $X^{\textbf{0}}_{i,j,t}$}, $(i,j,t)\in \mathcal{I}$ are the orbits
of $X\times X$ under the action
of {\rm Aut$_{\textbf{0}}$}$(X)$.
\end{pro}
\begin{proof}
Immediate from Proposition \ref{pro 15}.
\end{proof}
\begin{defi}\label{def1}
For each $(i,j,t)\in \mathcal{I}$, define the matrice $M^t_{i,j}\in {\rm{Mat}}_X(\mathbb{C})$ by
\begin{equation}\nonumber
(M^t_{i,j})_{xy}=\left\{\begin{array}{ll} 1 &\text{if}\ (x, y)\in X^\textbf{0}_{i,j,t},\\
 0 &\text{otherwise } \end{array}\right.
\ \ (x, y\in X).
\end{equation}
\end{defi}
Note that the transpose of $M^t_{i,j}$ is $M^t_{j,i}$.
Let $\mathcal{A}$ be the linear space spanned by  the matrices $M^t_{ij}$, $(i,j,t)\in \mathcal{I}$.
It is easy to check that $\mathcal{A}$ is closed under addition, scalar, taking the adjoint and
 matrix multiplication which is implied by Proposition \ref{pro 16}. Therefore $\mathcal{A}$ is a matrix $\mathbb{C}$$\ast$-algebra with
 the basis $M^t_{i,j}$. Next, we show that $\mathcal{A}$ coincides with $T$, where $T:=T$$(${\rm \textbf{0}}$)$ is the Terwilliger algebra of $\square_{2D}$. To do this, we need the following propositions.
Let $A_1$  and  $E^*_i=E^*_i$$(${\rm \textbf{0}}$)$ $(0\leq i\leq D)$ denote the adjacency matrix and the  $i${\rm th} dual idempotent, respectively.
\begin{pro}\label{pro2} With Definition {\rm \ref{def1}}, we have
\begin{itemize}
\item[\rm (i)] $M^{i}_{i,i}=E^*_i\ (0\leq i\leq D)$;
\item[\rm (ii)]
$M^{i-1}_{i-1, i}=E^*_{i-1}A_1E^*_{i}$,\ $M^{i-1}_{i, i-1}=E^*_{i}A_1E^*_{i-1}\ (0\leq i\leq D)$.
\end{itemize}
\end{pro}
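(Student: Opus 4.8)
The plan is to prove both identities by comparing the two matrices entry by entry, using the combinatorial description of the orbits $X^{\mathbf 0}_{i,j,t}$ displayed just before Definition~\ref{def1} together with the distance identities for $\square_{2D}$. Throughout I keep the labeling convention already in force: a vertex $x=(x_1,x_2)$ is written so that $|x_1|\le|x_2|$, so that $|x_1|=\partial(\mathbf 0,x)$ when $\partial(\mathbf 0,x)<D$ and $|x_1|=|x_2|=D$ when $\partial(\mathbf 0,x)=D$, and $x$ is recovered from either coordinate via $x_2=S\setminus x_1$. I also use repeatedly that $|x_1\triangle y_1|=|x_1|+|y_1|-2|x_1\cap y_1|$, that $|x_1\triangle y_1|+|x_1\triangle y_2|=2D$, and that $\partial(x,y)=\min\{|x_1\triangle y_1|,|x_1\triangle y_2|\}$.

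For (i): the $(x,y)$-entry of $M^{i}_{i,i}$ is $1$ exactly when $(x,y)\in X^{\mathbf 0}_{i,i,i}$. For $0\le i\le D-1$ this means $|x_1|=|y_1|=i$ and $|x_1\cap y_1|=i$, which forces $x_1=y_1$, hence $x=y$; for $i=D$ it means $|x_1|=|y_1|=|y_2|=D$ and $\max\{|x_1\cap y_1|,|x_1\cap y_2|\}=D$, which forces $x_1=y_1$ or $x_1=y_2$, hence $\{x_1,x_2\}=\{y_1,y_2\}$, i.e.\ $x=y$ as an unordered pair. Thus $M^{i}_{i,i}$ is diagonal and its $(x,x)$-entry equals $1$ precisely when $\partial(\mathbf 0,x)=i$; this is the defining property of $E^*_i$, so $M^{i}_{i,i}=E^*_i$.

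For (ii): since $A_1$ is a $\{0,1\}$-matrix and $E^*_{i-1},E^*_i$ are diagonal $\{0,1\}$-matrices, the $(x,y)$-entry of $E^*_{i-1}A_1E^*_i$ is $1$ iff $\partial(\mathbf 0,x)=i-1$, $\partial(x,y)=1$, and $\partial(\mathbf 0,y)=i$; I will show this is equivalent to $(x,y)\in X^{\mathbf 0}_{i-1,i,i-1}$. The outer two conditions give $|x_1|=i-1$ and $|y_1|=i$ (note $i-1\le D-1$ always, while $i$ may equal $D$). A short case check then shows: if $i\le D-1$, the condition $\partial(x,y)=1$ forces $|x_1\cap y_1|=i-1$, the alternative $|x_1\cap y_2|=D-1$ being impossible since $|x_1\cap y_2|\le|x_1|=i-1\le D-2$; if $i=D$, it forces $x_1\subseteq y_1$ or $x_1\subseteq y_2$, hence $\max\{|x_1\cap y_1|,|x_1\cap y_2|\}=D-1$. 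In either regime this is exactly the condition that defines $t=i-1$ for $X^{\mathbf 0}_{i-1,i,i-1}$, and these steps reverse, giving the converse. Hence $M^{i-1}_{i-1,i}=E^*_{i-1}A_1E^*_i$, and the companion identity follows by transposition, $M^{i-1}_{i,i-1}=(M^{i-1}_{i-1,i})^{\top}=(E^*_{i-1}A_1E^*_i)^{\top}=E^*_iA_1E^*_{i-1}$, using the symmetry of $A_1$ and each $E^*_k$ and the fact that $(M^{t}_{i,j})^{\top}=M^{t}_{j,i}$. (For $i=0$ one reads $E^*_{-1}=0$, and there is nothing to prove.)

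The only genuinely delicate point is the boundary case $i=D$ in (ii) — and, symmetrically, the situation where a vertex lies at distance $D$ from $\mathbf 0$ and is therefore recorded as an unordered pair — where the defining $t$ is a maximum over two (or four) intersection cardinalities. One must check that $\partial(x,y)=1$ selects exactly the configurations this maximum is designed to capture, and also that $\partial(x,y)=1$ together with $\partial(\mathbf 0,x)=i-1$, $\partial(\mathbf 0,y)=i$ cannot place $(x,y)$ in any other orbit $X^{\mathbf 0}_{i-1,j,t}$; both follow from the parity of $|x_1\triangle y_k|$ and the cardinality bound $|x_1\cap y_k|\le|x_1|$. Everything else is a mechanical translation between the distance relations of $\square_{2D}$ and the cardinality conditions defining the orbits $X^{\mathbf 0}_{i,j,t}$.
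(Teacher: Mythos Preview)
Your proof is correct and follows essentially the same entrywise comparison as the paper, including the same case split at $i=D$; you simply supply more of the routine verifications (ruling out $|x_1\triangle y_2|=1$ when $i\le D-1$, and handling the converse) that the paper leaves implicit. The use of transposition for the companion identity is also exactly what the paper relies on via the remark $(M^t_{i,j})^{\top}=M^t_{j,i}$.
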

\begin{proof}
(i) It follows from that the $(x,y)$-entry of $M^{i}_{i,i}$ is $1$ if $x=y$, $|x_1|=i$ and $0$ otherwise.

(ii) Consider the $(x,y)$-entry of both $M^{i-1}_{i-1, i}$ and $E^*_{i-1}A_1E^*_{i}$. For
$0\leq i\leq D-1$, we have $(M^{i-1}_{i-1, i})_{xy}=(E^*_{i-1}A_1E^*_{i})_{xy}$ is $1$ if $|x_1|=i-1,|y_1|=i$, $|x_1\cap y_1|=i-1$ and $0$ otherwise. For $i=D$, we have $(M^{D-1}_{D-1, D})_{xy}=(E^*_{D-1}A_1E^*_{D})_{xy}$ is $1$ if $|x_1|=D-1,|y_1|=|y_2|=D$, max$\{|x_1\cap y_1|,|x_1\cap y_2|\}=D-1$ and $0$ otherwise.
\end{proof}
\begin{pro}\label{pro 5}
With  Definition {\rm \ref{def1}}, we have
\begin{itemize}
\item[\rm (i)] $M^k_{k+i,k}=\frac{1}{i!}M^{k+i-1}_{k+i,k+i-1}\cdots M^{k+1}_{k+2,k+1}M^{k}_{k+1,k}\ \ (k\neq 0,i\geq 1)\ {\rm or} \ (k=0, 1\leq i\leq D-1)$;
\item[\rm (ii)] $M^0_{D,0}=\frac{1}{2D!}M^{D-1}_{D,D-1}\cdots M^{1}_{2,1}M^{0}_{1,0}$;
\item[\rm (iii)] $M^{k-i}_{k-i,k}=\frac{1}{i!}M^{k-i}_{k-i,k-i+1}M^{k-i+1}_{k-i+1,k-i+2}\cdots M^{k-1}_{k-1,k}\ \ (1\leq i<k\leq D) \ {\rm or} \ (1\leq k=i\leq D-1)$.
\end{itemize}
\end{pro}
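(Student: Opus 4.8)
The plan is to compute the $(x,z)$-entry of each matrix product on the right-hand side and match it against the defining $0$--$1$ pattern of the matrix on the left. The point is that, on the levels occurring here, right-multiplication by $M^{m}_{m+1,m}$ adjoins one more point to a subset of $S$, so an iterated product counts saturated chains in a Boolean lattice, and a saturated chain in an interval of rank $i$ can be built in exactly $i!$ ways.

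First I would prove (i) by induction on $i$, the case $i=1$ being trivial. For the inductive step it suffices to show
\[
M^{k+i-1}_{k+i,k+i-1}\,M^{k}_{k+i-1,k}=i\,M^{k}_{k+i,k}.
\]
In the generic range $k+i\le D-1$, $(M^{k+i-1}_{k+i,k+i-1})_{xy}=1$ precisely when $y_1\subset x_1$ with $|x_1|=k+i$, $|y_1|=k+i-1$, and $(M^{k}_{k+i-1,k})_{yz}=1$ precisely when $z_1\subseteq y_1$ with $|z_1|=k$; hence the $(x,z)$-entry of the product is the number of sets $y_1$ with $z_1\subseteq y_1\subset x_1$ and $|y_1|=|x_1|-1$, which equals $i$ if $z_1\subseteq x_1$ and $0$ otherwise, i.e.\ $i\,(M^{k}_{k+i,k})_{xz}$. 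When $k+i=D$ (which in (i) occurs only for $k\ge 1$), the top factor $M^{k+i-1}_{k+i,k+i-1}=M^{D-1}_{D,D-1}$ instead records ``$w_1\subseteq x_1$ or $w_1\subseteq x_2$''; since $k\ge 1$ keeps $z_1$ out of $x_1\cap x_2=\emptyset$, the same value $i$ (resp.\ $0$) results, and $M^{k}_{D,k}$ carries exactly this pattern. For $k=0$, $1\le i\le D-1$, no level-$D$ vertex is involved and the generic computation applies verbatim.

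Part (iii) then follows from (i) by transposition: using the remark after Definition \ref{def1} that $(M^t_{i,j})^{\top}=M^t_{j,i}$, transposing the identity in (i) reverses the order of the factors and replaces each $M^{m}_{m+1,m}$ by $M^{m}_{m,m+1}$, and the relabelling $k\mapsto k-i$ turns it into (iii) with the stated index ranges. The genuinely separate case is (ii), the climb to level $D$ from $\textbf{0}$: there $M^{D-1}_{D,D-1}$ sees both $x_1$ and $x_2$, and $\emptyset$ lies in both, so the saturated chains out of $\emptyset$ split into two families of $D!$ each and the factor becomes $2D!$. Concretely I would write $M^{D-1}_{D,D-1}\cdots M^{0}_{1,0}=(D-1)!\,M^{D-1}_{D,D-1}M^{0}_{D-1,0}$ by (i), and then check $M^{D-1}_{D,D-1}M^{0}_{D-1,0}=2D\,M^{0}_{D,0}$ by counting the $2D$ subsets of size $D-1$ lying in $x_1$ or in $x_2$.

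The main obstacle is bookkeeping rather than ideas: the definition of $X^{\textbf{0}}_{i,j,t}$ splits into four cases ($i,j\le D-1$; $i=D$; $j=D$; $i=j=D$), and in every product one must keep track of which factors are of the level-$D$ ``unordered pair'' type and ensure the resulting doubling is counted exactly once --- which is precisely what forces the exclusion of $k=0$ from (i) and the separate identity (ii).
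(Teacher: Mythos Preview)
Your proposal is correct and follows essentially the same approach as the paper: induction on $i$ via the one-step identity $M^{k+i-1}_{k+i,k+i-1}M^{k}_{k+i-1,k}=i\,M^{k}_{k+i,k}$ (the paper records the first nontrivial instance $M^{k+1}_{k+2,k+1}M^{k}_{k+1,k}=2M^{k}_{k+2,k}$ and then invokes induction), the separate check $M^{D-1}_{D,D-1}M^{0}_{D-1,0}=2D\,M^{0}_{D,0}$ for (ii), and transposition for (iii). Your additional commentary on saturated chains and on why $k\ge 1$ prevents double-counting at level $D$ makes explicit what the paper leaves implicit, but the arguments coincide.
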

\begin{proof}
(i) It is easy to verify $M^{k+1}_{k+2,k+1}M^{k}_{k+1,k}=2M^k_{k+2,k}$ since the entry of this matrix in  position $(x,y)$, with $|x_1|=k+2$
and $|y_1|=k$, is equal to $|\{z\in X||z_1|=k+1,y_1\subseteq z_1\subseteq x_1\}|$ if $k+2<D$ or
$|\{z\in X||z_1|=k+1,y_1\subseteq z_1\subseteq x_1\ \text{or}\ y_1\subseteq z_1\subseteq x_2\}|$  if $k+2=D$. Then by induction on $i\ ((k\neq 0,i\geq 1)$ or $(k=0, 1\leq i\leq D-1))$ we can obtain the desired result.\\
(ii) By use of (i), we first have $M^{D-2}_{D-1,D-2}\cdots M^{0}_{1,0}=(D-1)!M^{0}_{D-1,0}$.           Then we have $M^{D-1}_{D,D-1}M^{0}_{D-1,0}=2DM^{0}_{D,0}$ since the entry of this matrix in  position $(x,y)$, with $|x_1|=|x_2|=D$
and $|y_1|=0$, is equal to $|\{z\in X||z_1|=D-1,z_1\subseteq x_1 \ \text{or}\ z_1\subseteq x_2)\}|=2D$.\\
(iii) By taking transpose of both sides of (i) and replacing $k$ by $k-i$, we can obtain the desired result.
\end{proof}
\begin{pro}\label{pro 8}
With  Definition {\rm \ref{def1}}, we have
\begin{itemize}
\item[\rm (i)] for $0\leq i,j\leq D-1$,
\begin{align}\nonumber
M_{i,j}^{t}=\sum_{k=0}^{D-1}(-1)^{k-t}{k\choose t}M_{i,k}^{k}M_{k,j}^{k};
\end{align}
\item[\rm (ii)] for $i=D,0\leq j\leq D-1$ and $t\geq \lfloor\frac{j}{2}\rfloor+1$,
\begin{align}\nonumber
M_{D,j}^{t}=\sum_{k=\lfloor\frac{j}{2}\rfloor+1}^{D-1}(-1)^{k-t}{k\choose t}M_{D,k}^{k}M_{k,j}^{k};
\end{align}
\item[\rm (iii)] for $i=D,0\leq j\leq D-1$ and $t=\frac{j}{2}$ {\rm($j$ even)},
\begin{align}\nonumber
M_{D,j}^{\frac{j}{2}}=\frac{1}{2}\sum_{k=\frac{j}{2}}^{D-1}(-1)^{k-\frac{j}{2}}{k\choose \frac{j}{2}}M_{D,k}^{k}M_{k,j}^{k};
\end{align}
\item[\rm (iv)] for $0\leq i\leq D-1,j=D$ and $t\geq \lfloor\frac{i}{2}\rfloor+1$,
\begin{align}\nonumber
M_{i,D}^{t}=\sum_{k=\lfloor\frac{i}{2}\rfloor+1}^{D-1}(-1)^{k-t}{k\choose t}M_{i,k}^{k}M_{k,D}^{k};
\end{align}
\item[\rm (v)] for $0\leq i\leq D-1,j=D$ and $t=\frac{i}{2}$ {\rm($i$ even)},
\begin{align}\nonumber
M_{i,D}^{\frac{i}{2}}=\frac{1}{2}\sum_{k=\frac{i}{2}}^{D-1}(-1)^{k-\frac{i}{2}}{k\choose \frac{i}{2}}M_{i,k}^{k}M_{k,D}^{k};
\end{align}
\item[\rm (vi)] for $i=j=D$ and $t\geq \lfloor\frac{D}{2}\rfloor+1$,
\begin{align}\nonumber
M_{D,D}^{t}=\frac{1}{2}\bigg(\sum_{k=\lfloor\frac{D}{2}\rfloor+1}^{D}(-1)^{k-t}{k\choose t}M_{D,k}^{k}M_{k,D}^{k}+(-1)^{D-t}{D\choose t}M_{D,D}^{D}\bigg);
\end{align}
\item[\rm (vii)] for $i=j=D$ and $t=\frac{D}{2}$ {\rm($D$ even)},
\begin{align}\nonumber
M_{D,D}^{\frac{D}{2}}=\frac{1}{4}\bigg(\sum_{k=\frac{D}{2}}^{D}(-1)^{k-\frac{D}{2}}{k\choose \frac{D}{2}}M_{D,k}^{k}M_{k,D}^{k}+(-1)^{\frac{D}{2}}{D\choose \frac{D}{2}}M_{D,D}^{D}\bigg).
\end{align}
\end{itemize}
\end{pro}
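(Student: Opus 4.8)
The plan is to prove all of (i)--(vii) by one and the same entrywise computation. Fix $x=(x_1,x_2),y=(y_1,y_2)\in X$. By Definition \ref{def1} and the description of $X^{\textbf{0}}_{i,j,t}$ given just before Proposition \ref{pro 16}, the $(x,y)$-entry of $M^t_{i,j}$ is $1$ exactly when $|x_1|=i$, $|y_1|=j$, and $t$ is the prescribed intersection number: $t=|x_1\cap y_1|$ when $0\le i,j\le D-1$, and $t$ a maximum over the (two, or four) numbers obtained by optionally swapping $x_1\leftrightarrow x_2$ when $i=D$ and $y_1\leftrightarrow y_2$ when $j=D$. So the claim reduces to evaluating the $(x,y)$-entries of the matrix products on the right, and one immediately checks that both sides vanish unless $|x_1|=i$ and $|y_1|=j$, since $M^{k}_{i,k}$ (resp. $M^{k}_{D,k}$) has no nonzero row outside $|x_1|=i$ (resp. $|x_1|=D$), and similarly for the right-hand factor.

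First I would compute those product entries. For $1\le k\le D-1$ the $(x,y)$-entry of $M^{k}_{i,k}M^{k}_{k,j}$ counts the $k$-subsets $z_1\subseteq S$ arising as the small part of an admissible intermediate vertex $z$; the admissibility conditions read ``$z_1\subseteq x_1$'' (relaxed to ``$z_1\subseteq x_1$ or $z_1\subseteq x_2$'' when $i=D$) together with ``$z_1\subseteq y_1$'' (relaxed to ``$z_1\subseteq y_1$ or $z_1\subseteq y_2$'' when $j=D$). Since $x_1\cap x_2=y_1\cap y_2=\emptyset$, all cross-terms vanish for $k\ge1$, so this count is $\binom{|x_1\cap y_1|}{k}$ in case (i); $\binom{|x_1\cap y_1|}{k}+\binom{|x_2\cap y_1|}{k}$ in (ii)--(iii); $\binom{|x_1\cap y_1|}{k}+\binom{|x_1\cap y_2|}{k}$ in (iv)--(v); and $2\binom{|x_1\cap y_1|}{k}+2\binom{|x_1\cap y_2|}{k}$ in (vi)--(vii) (using $|x_1\cap y_1|=|x_2\cap y_2|$ and $|x_1\cap y_2|=|x_2\cap y_1|$). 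Two values need separate attention: $k=0$, where the unique intermediate vertex is $\textbf{0}$ so the entry is $1$; and $k=D$ in (vi)--(vii), where $M^{D}_{D,D}=E^*_D$ by Proposition \ref{pro2}(i), hence $(M^{D}_{D,D})^{2}=E^*_D$, i.e. the ``doubled'' value the generic formula would predict is not attained.

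Next I would substitute these into the right-hand sides and collapse the sums using Lemma \ref{lem 2.8}(ii), which gives $\sum_{k=0}^{s}(-1)^{k-t}\binom{k}{t}\binom{s}{k}=\delta_{t,s}$. The point is that in each of (i)--(vii) the standing hypothesis on $t$ (namely $t\ge\lfloor\cdot/2\rfloor+1$, or $t=\cdot/2$) forces $\binom{k}{t}=0$ for every $k$ below the stated lower summation limit, so each sum may be freely extended down to $k=0$ and then collapsed: $\sum_k(-1)^{k-t}\binom{k}{t}\binom{a}{k}=[\,t=a\,]$ for $a$ any of the relevant intersection numbers. Reassembling, the $(x,y)$-entry of the right-hand side becomes $[\,t=a\,]$ in case (i), and $[\,t=a\,]+[\,t=a'\,]$ in the remaining cases, where $a+a'$ equals the relevant one of $i,j$ in (ii)--(v) and equals $D$ in (vi)--(vii); in (vi)--(vii) one also picks up the $k=D$ summand, and the extra explicit term $(-1)^{D-t}\binom{D}{t}M^{D}_{D,D}$ is precisely the second copy of $E^*_D$ needed to make the collapse valid there. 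Since $t>\tfrac12(a+a')$ in (ii),(iv),(vi), at most one indicator can fire and it fires iff $t=\max\{a,a'\}$, which is the defining condition for $X^{\textbf{0}}_{i,j,t}$; in (iii),(v),(vii) one has $t=\tfrac12(a+a')$, so both indicators fire simultaneously whenever they fire, producing the factor $2$ (or, in (vii), the compounded factor $4$ from the independent coincidences $x_1\leftrightarrow x_2$ and $a\leftrightarrow D-a$) that the prefactor $\tfrac12$ (resp. $\tfrac14$) cancels.

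The hard part, I expect, is not any individual identity but the bookkeeping that must support all of them at once: for each of the seven regimes one has to track exactly how many cross-terms in the matrix products degenerate, with what multiplicity, and how this interacts with the anomalous idempotent behaviour $M^{D}_{D,D}=E^*_D$ at $k=D$ (which is the reason (vi)--(vii) carry both an extra $M^{D}_{D,D}$ term and an extra factor $\tfrac12$). A few genuinely small tuples, e.g. $(D,0,0)$ and $(0,D,0)$, are cleanest to read off directly from Proposition \ref{pro 5}. Once the multiplicities are tabulated, each of (i)--(vii) follows in one line from Lemma \ref{lem 2.8}(ii).
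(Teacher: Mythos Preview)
Your proposal is correct and follows essentially the same route as the paper: compute the $(x,y)$-entry of each product $M^{k}_{i,k}M^{k}_{k,j}$ by counting admissible intermediate vertices $z$ (yielding $\binom{|x_1\cap y_1|}{k}$, or the two-term/four-term sums when $i=D$ or $j=D$), then invert with Lemma~\ref{lem 2.8}(ii). The paper packages the first step as the matrix identities $M^{k}_{i,k}M^{k}_{k,j}=\sum_l\binom{l}{k}M^{l}_{i,j}$ (with the appropriate extra $\binom{j-l}{k}$ or $\binom{D-l}{k}$ terms and the $-\binom{k}{D}M^{D}_{D,D}$ correction in the $D$--$D$ case), but that is exactly your entrywise computation rewritten in the $M^{l}_{i,j}$-basis, and the inversion step is identical.
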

\begin{proof}
(i) For $0\leq i,j\leq D-1$, we  have
$M_{i,k}^{k}M_{k,j}^{k}=\sum_{l=0}^{D-1}{l\choose k}M^l_{ij}$
since the entry of this matrix in  position $(x,y)$, with $|x_1|=i$
and $|y_1|=j$, is equal to $|\{z\in X||z_1|=k,z_1\subseteq (x_1\cap y_1)\}|$. It follows from Lemma \ref{lem 2.8}(ii) that
\begin{align}
\sum_{k=0}^{D-1}(-1)^{k-t}{k\choose t}M_{i,k}^{k}M_{k,j}^{k}
           &=\sum_{k=0}^{D-1}(-1)^{k-t}{k\choose t}\sum_{l=0}^{D-1}{l\choose k}M^l_{ij}\nonumber\\
           &=\sum_{l=0}^{D-1}\delta_{l,t}M^l_{ij}   \nonumber\\
           &=M_{i,j}^{t}.\nonumber
\end{align}

For  cases (ii)--(vii),  the proofs are similar to that of (i). Note that for $0\leq j\leq D-1$ $M_{D,k}^{k}M_{k,j}^{k}=\sum^{D-1}_{l=0}\big({l\choose k}+{j-l\choose k}\big)M_{D,j}^{l}$
($l\geq \lfloor\frac{j+1}{2}\rfloor$)
since the entry of this matrix in  position $(x,y)$, with $|x_1|=|x_2|=D$
and $|y_1|=j$, is equal to $|\{z\in X||z_1|=k,\ z_1\subseteq (x_1\cap y_1)\ \text{or}\ z_1\subseteq (x_2\cap y_1)\}|$; for  $1\leq k\leq D$,
$M_{D,k}^{k}M_{k,D}^{k}=\sum^D_{l=0}2\big({l\choose k}+{D-l\choose k}\big)M_{D,D}^{l}-{k\choose D}M_{D,D}^{D}$
($l\geq \lfloor\frac{D+1}{2}\rfloor$) since the entry of this matrix in  position $(x,y)$, with $|x_1|=|x_2|=D$
and $|y_1|=|y_2|=D$, is equal to $|\{z\in X||z_1|=k,z_1\subseteq (x_1\cap y_1)\ \text{or}\ z_1\subseteq (x_2\cap y_1) \ \text{or}\ z_1\subseteq (x_1\cap y_2)\ \text{or}\ z_1\subseteq (x_2\cap y_2)\}|$.
\end{proof}
\begin{thm}\label{thm2}
For $\square_{2D}$, the algebras $\mathcal{A}$ and $T$ coincide.
\end{thm}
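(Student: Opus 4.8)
The plan is to prove the two inclusions $T\subseteq\mathcal{A}$ and $\mathcal{A}\subseteq T$ separately, relying on the structural facts already assembled. For $T\subseteq\mathcal{A}$: it has already been observed that $\mathcal{A}$ is a matrix $\mathbb{C}\ast$-algebra, so it suffices to check that the generators of $T$ lie in $\mathcal{A}$; since $T$ is generated by $\mathcal{M}$ and $\mathcal{M}^\ast$, and $\mathcal{M}$ is in turn generated by $A_1$, it is enough to verify $A_1\in\mathcal{A}$ and $E^\ast_i\in\mathcal{A}$ for all $i$. The latter is Proposition \ref{pro2}(i). For the former, I would use that $\square_{2D}$ is bipartite distance-regular, so $E^\ast_i A_1 E^\ast_j=0$ whenever $|i-j|\neq 1$; hence $A_1=\sum_{i=1}^{D}\bigl(E^\ast_{i-1}A_1E^\ast_i+E^\ast_iA_1E^\ast_{i-1}\bigr)$, which equals $\sum_{i=1}^{D}\bigl(M^{i-1}_{i-1,i}+M^{i-1}_{i,i-1}\bigr)\in\mathcal{A}$ by Proposition \ref{pro2}(ii). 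Therefore $\mathcal{M}\subseteq\mathcal{A}$, $\mathcal{M}^\ast\subseteq\mathcal{A}$, and $T\subseteq\mathcal{A}$.

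For $\mathcal{A}\subseteq T$, since the matrices $M^t_{i,j}$ $((i,j,t)\in\mathcal{I})$ form a basis of $\mathcal{A}$, it suffices to show each of them lies in $T$, and I would do this in two steps. Step one: $T$ is closed under transposition (it is generated by the symmetric matrices $A_1,E^\ast_0,\dots,E^\ast_D$), and by Proposition \ref{pro2} the matrices $E^\ast_k=M^k_{k,k}$, $E^\ast_kA_1E^\ast_{k-1}=M^{k-1}_{k,k-1}$, $E^\ast_{k-1}A_1E^\ast_k=M^{k-1}_{k-1,k}$ all belong to $T$. Proposition \ref{pro 5}(i)--(iii) then writes every ``corner'' matrix $M^{k}_{k+i,k}$, $M^0_{D,0}$, $M^{k-i}_{k-i,k}$ as a scalar times a product of these one-step matrices, so all $M^t_{i,j}$ with $t=\min\{i,j\}$ lie in $T$; taking transposes covers the remaining corner matrices (the boundary subcases with an index equal to $0$, such as $M^0_{0,j}=E^\ast_0A_jE^\ast_j$, can also be read off directly). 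Step two: Proposition \ref{pro 8} expresses an arbitrary $M^t_{i,j}$ as a $\mathbb{C}$-linear combination of products $M^k_{i,k}M^k_{k,j}$ (plus the term $M^D_{D,D}=E^\ast_D$ in cases (vi)--(vii)); each factor is either a corner matrix, a transpose of one, or (when the three-tuple is not in $\mathcal{I}$) the zero matrix, hence lies in $T$, and therefore so does $M^t_{i,j}$. Combining the two inclusions gives $\mathcal{A}=T$.

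The one place that requires care---and the ``obstacle'', such as it is---is the bookkeeping that the seven cases of Proposition \ref{pro 8} really do exhaust all of $\mathcal{I}$: (i) handles $0\le i,j\le D-1$; (ii)--(iii) handle $i=D$, $0\le j\le D-1$, where one checks $\lfloor\frac{j+1}{2}\rfloor=\lfloor\frac{j}{2}\rfloor+1$ for $j$ odd so that (ii) already reaches the minimal admissible $t$, while (iii) supplies $t=\frac{j}{2}$ for $j$ even; (iv)--(v) are the mirror images with $j=D$; and (vi)--(vii) handle $i=j=D$, matching $t\ge\lfloor\frac{D+1}{2}\rfloor$ against $t\ge\lfloor\frac{D}{2}\rfloor+1$ and $t=\frac{D}{2}$. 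Since the genuine computations have all been done in Propositions \ref{pro2}, \ref{pro 5} and \ref{pro 8}, the proof of the theorem is essentially this assembly, together with the observation that $\mathcal{A}$---defined as a span of orbit indicator matrices---is, by those propositions, generated as an algebra by the handful of matrices $E^\ast_i$ and $E^\ast_iA_1E^\ast_{i\pm1}$ that manifestly lie in $T$.
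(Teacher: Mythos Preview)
Your proposal is correct and follows essentially the same route as the paper: prove $T\subseteq\mathcal{A}$ via the identities $A_1=\sum_{i=1}^{D}\bigl(M^{i-1}_{i,i-1}+M^{i-1}_{i-1,i}\bigr)$ and $E^\ast_i=M^i_{i,i}$ from Proposition~\ref{pro2}, and prove $\mathcal{A}\subseteq T$ by showing each $M^t_{i,j}\in T$ using Propositions~\ref{pro2}--\ref{pro 8}. Your write-up simply unpacks the paper's one-line reference to those propositions into the explicit two-step argument (corner matrices first, then the general case via Proposition~\ref{pro 8}) and checks the case coverage, which is exactly what the paper leaves implicit.
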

\begin{proof}
 On the one hand, we have  $T\subseteq \mathcal{A}$ since $A_1=\sum^{D}_{i=1}(M^{i-1}_{i,i-1}+M^{i-1}_{i-1,i})$ and $E^*_i=M^{i}_{i,i}\ (0\leq i\leq D)$
 by Proposition \ref{pro2}.
 On the other hand, by Propositions \ref{pro2}--\ref{pro 8} we have  $\mathcal{A}\subseteq T$ since each $M^t_{ij}\in T$ for $(i,j,t)\in \mathcal{I}$.
 So the algebras $\mathcal{A}$ and $T$ coincide.
\end{proof}
\subsection{The Terwilliger algebra of $\square_{2D+1}$}
Recall the definition of $X$ for $n=2D+1$ and we view $X$ as the set consisting of all ordered pairs $(u,u')$ with $|u|<|u'|$.
 To each ordered triple $(x,y,z)\in X\times X\times X$, where
 $x:=(x_1,x_2), y:=(y_1,y_2), z:=(z_1,z_2)$,
 define $\partial(x,y,z)=(i,j,t)$: $i=\partial(x,y)$, $j=\partial(x,z)$, without loss of generality, let $|x_1\bigtriangleup y_1|=i$ and $|x_1\bigtriangleup z_1|=j$. Then  $t=|(x_1\bigtriangleup y_1) \cap  (x_1\bigtriangleup z_1)|$.
Observe that $0\leq t\leq i,j\leq D$
 and $\partial(y,z)=\text{min}\{i+j-2t,\  2D+1-(i+j-2t)\}$.
The set of three-tuples
$(i,j,t)$ that occur as $\partial(x,y,z)=(i,j,t)$ for some $x,y,z\in X$ is given by
$\mathcal{I}':=\{(i,j,t)|0\leq t\leq i,j\leq D, i+j-t\leq 2D \}.$
\begin{pro}\label{pro 1}
We have
$|\mathcal{I}'|=\frac{(D+1)(D+2)(2D+3)}{6}$.
\end{pro}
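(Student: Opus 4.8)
The plan is to count $\mathcal{I}'$ directly, exploiting the fact that, unlike in the even case treated in Proposition~\ref{pro 3}, the diameter bound $i+j-t\le 2D$ turns out to be redundant. First I would observe that any triple with $0\le t\le i\le D$ and $0\le t\le j\le D$ automatically satisfies $i+j-t\le 2D$: indeed $i\le D$ and $j\le D$ give $i+j\le 2D$, hence $i+j-t\le 2D-t\le 2D$ since $t\ge 0$. Consequently
\begin{align*}
\mathcal{I}'=\{(i,j,t)\mid 0\le t\le i\le D,\ t\le j\le D\}.
\end{align*}

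Next I would count $\mathcal{I}'$ by summing over the value of $t$. For fixed $t$ with $0\le t\le D$, the index $i$ ranges freely over $\{t,t+1,\dots,D\}$ and, independently, $j$ ranges over $\{t,t+1,\dots,D\}$; each of these sets has $D-t+1$ elements, so there are $(D-t+1)^2$ triples with first coordinate-gap parameter equal to $t$. Hence
\begin{align*}
|\mathcal{I}'|=\sum_{t=0}^{D}(D-t+1)^2=\sum_{k=1}^{D+1}k^2,
\end{align*}
after reindexing by $k=D-t+1$. Applying the classical identity $\sum_{k=1}^{m}k^2=\tfrac{m(m+1)(2m+1)}{6}$ with $m=D+1$ then gives $|\mathcal{I}'|=\tfrac{(D+1)(D+2)(2D+3)}{6}$, as claimed.

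There is essentially no obstacle here; the only point requiring a moment's care is verifying that the inequality $i+j-t\le 2D$ is forced by the other constraints, which is exactly what makes the odd case simpler than the even one. Alternatively, one could imitate the three-case argument of Proposition~\ref{pro 3}, setting $l=i+j-t$ and substituting $i'=i-t$, $j'=j-t$ to reduce to counting the solutions of $i'+j'+t=l$ with $0\le i',j',t\le D$ and then summing over $l$; but this route is more laborious and the direct summation above is to be preferred.
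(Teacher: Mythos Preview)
Your proof is correct. The key observation that the constraint $i+j-t\le 2D$ is forced by $i\le D$, $j\le D$, $t\ge 0$ is valid, and the subsequent count $\sum_{t=0}^{D}(D-t+1)^2=\sum_{k=1}^{D+1}k^2$ is straightforward.

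The paper takes a different route: it mirrors the argument of Proposition~\ref{pro 3} by summing over $l=i+j-t$ rather than over $t$. For $0\le l\le D$ the substitution $i'=i-t$, $j'=j-t$ gives $\binom{l+2}{2}$ solutions, and for $D+1\le l\le 2D$ the substitution $i'=D-i$, $j'=D-j$ gives $\binom{2D-l+2}{2}$ solutions, so that
\[
|\mathcal{I}'|=\sum_{l=0}^{D}\binom{l+2}{2}+\sum_{l=D+1}^{2D}\binom{2D-l+2}{2}=\frac{(D+1)(D+2)(2D+3)}{6}.
\]
This is exactly the alternative you sketch in your final paragraph. Your direct summation over $t$ is shorter and avoids the case split on $l$; the paper's approach has the virtue of reusing the machinery already set up for the even case $\square_{2D}$, where the extra constraints on $t$ when $i=D$ or $j=D$ make a decomposition by $l$ genuinely necessary. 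Either argument is fine here, and yours is the more economical one for this particular count.
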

\begin{proof}
Similar to the proof of Proposition \ref{pro 3}(i), (ii):
$|\mathcal{I}'|=\sum^D_{l=0}{l+2\choose 2}+\sum^{2D}_{l=D+1}{2D-l+2\choose 2}\nonumber\\
=\frac{(D+1)(D+2)(2D+3)}{6}$.
\end{proof}
For each $(i,j,t)\in \mathcal{I}'$, define the sets
$X_{i,j,t}$ and
$X^\textbf{0}_{i,j,t}$  as in Subsection 3.1.
Note that $X^\textbf{0}_{i,j,t}=\{(x,y)\in X\times X| |x_1|=i,|y_1|=j, |x_1\cap y_1|=t\}$.
Similar to the proof of Proposition \ref{pro 15}, we have the following proposition.
\begin{pro}
The sets $X_{i,j,t}$, $(i,j,t)\in \mathcal{I}'$ are the orbits
of $X\times X\times X$ under the action
of {\rm Aut}$(X)$, where {\rm Aut}$(X)$ is the  automorphism group of $\square_{2D+1}$.
The sets {\rm $X^{\textbf{0}}_{i,j,t}$}, $(i,j,t)\in \mathcal{I}'$ are the orbits
of $X\times X$ under the action
of {\rm Aut$_{\textbf{0}}$}$(X)$, where {\rm Aut$_{\textbf{0}}$}$(X)$ is the stabilizer of vertex {\rm ${\textbf{0}}$} in ${\rm Aut}(X)$.
\end{pro}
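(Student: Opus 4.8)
The plan is to follow, almost verbatim, the proof of Proposition~\ref{pro 15} together with Proposition~\ref{pro 16}, exploiting the fact that the odd case $\square_{2D+1}$ is in fact the cleaner one: since $2D+1$ is odd, for any two vertices $x,y$ the two numbers $|x_1\triangle y_1|$ and $|x_1\triangle y_2|$ are distinct, so the representative with $|x_1\triangle y_1|=\partial(x,y)$ is unambiguous and no ``$\max$'' appears in the definition of $t$. I would first record that, by \cite[p.~265]{bcn}, ${\rm Aut}(X)=2^{2D}.{\rm sym}(2D+1)$; here $|X|=2^{2D}$, the elementary abelian part acts regularly on $X$ by coordinate flips (translations), and a point stabilizer is a complement, so ${\rm Aut}_{\textbf{0}}(X)={\rm sym}(2D+1)$ for $\textbf{0}=(\emptyset,S)$.

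The first step is to check that $(i,j,t)=\partial(x,y,z)$ is an invariant of the ${\rm Aut}(X)$-action, i.e.\ $\partial(\sigma x,\sigma y,\sigma z)=(i,j,t)$ for all $\sigma$. Since $i=\partial(x,y)$ and $j=\partial(x,z)$ are graph distances, they are preserved by any automorphism. For $t$ one writes $\sigma=\pi\tau_v$ with $\tau_v$ a translation and $\pi\in{\rm sym}(2D+1)$: the translation sends $x_1\triangle y_1$ to $(x_1\triangle v)\triangle(y_1\triangle v)=x_1\triangle y_1$, and $\pi$ sends it to $\pi(x_1\triangle y_1)$; likewise for $x_1\triangle z_1$. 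Since $t=|(x_1\triangle y_1)\cap(x_1\triangle z_1)|$ is obtained from these symmetric differences by intersection and cardinality, it is unchanged, so each $X_{i,j,t}$ is ${\rm Aut}(X)$-invariant. As in the even case this is essentially a ``one easily verifies,'' and is the point I would actually write out, because the bookkeeping involves reordering the representatives $(u,u')$ by cardinality; the identity $x_2\triangle y_2=x_1\triangle y_1$ (valid precisely because $u'=S\setminus u$, and likewise with $z$) is what makes this reordering harmless.

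Next, for transitivity, fix $(i,j,t)\in\mathcal{I}'$ and $(x,y,z),(x',y',z')\in X_{i,j,t}$. Applying a translation (the translation group acts transitively on $X$) I would move $x$ to $\textbf{0}$, and similarly $x'$ to $\textbf{0}$; since $\textbf{0}_1=\emptyset$ this reduces the problem to showing that ${\rm Aut}_{\textbf{0}}(X)={\rm sym}(2D+1)$ acts transitively on $X^{\textbf{0}}_{i,j,t}=\{(y,z)\in X\times X: |y_1|=i,\ |z_1|=j,\ |y_1\cap z_1|=t\}$. This is the classical fact that a pair of subsets $(Y,Z)$ of a $(2D+1)$-set is determined, up to the action of ${\rm sym}(2D+1)$, by the four cardinalities $|Y\cap Z|$, $|Y\setminus Z|$, $|Z\setminus Y|$, $|S\setminus(Y\cup Z)|$; here these equal $t$, $i-t$, $j-t$, and $2D+1-(i+j-t)$, and the defining inequalities of $\mathcal{I}'$ (namely $0\le t\le i,j\le D$ and $i+j-t\le 2D$) force all four to be nonnegative, so a suitable $\pi\in{\rm sym}(2D+1)$ exists with $\pi(y_1)=y_1'$ and $\pi(z_1)=z_1'$. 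One checks that $\pi$ respects the ordering-by-cardinality convention on representatives (as $|\pi(y_1)|=i\le D<|\pi(y_2)|$), so $\pi$ genuinely carries $(\textbf{0},y,z)$ to $(\textbf{0},y',z')$.

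Finally, the second assertion — that the sets $X^{\textbf{0}}_{i,j,t}$, $(i,j,t)\in\mathcal{I}'$, are the orbits of $X\times X$ under ${\rm Aut}_{\textbf{0}}(X)$ — is immediate by restricting the first assertion to $\{\textbf{0}\}\times X\times X$, exactly as Proposition~\ref{pro 16} follows from Proposition~\ref{pro 15}. The main obstacle is not conceptual but notational: it is the careful verification that $t$ is preserved under translations in the presence of the size-ordering convention on vertex representatives; once that is in hand, the rest is routine.
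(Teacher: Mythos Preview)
Your proposal is correct and follows essentially the same approach as the paper: the paper's own proof is simply ``Similar to the proof of Proposition~\ref{pro 15},'' and you have carried out exactly that transfer, with the additional (and welcome) observation that the odd case is notationally cleaner because the representative achieving $|x_1\triangle y_1|=\partial(x,y)$ is unique. Your explicit check that translations preserve the pair $\{x_1\triangle y_1,\,x_1\triangle y_2\}$ as a pair of sets, and hence preserve $t$, is more detailed than what the paper writes but is precisely the content of its ``one easily verifies.''
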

\begin{defi}\label{def2}
For each $(i,j,t)\in \mathcal{I}'$, define the matrice $M^t_{i,j}\in {\rm{Mat}}_X(\mathbb{C})$ by
\begin{equation}\nonumber
(M^t_{i,j})_{xy}=\left\{\begin{array}{ll} 1 &\text{if}\ (x, y)\in X^\textbf{0}_{i,j,t},\\
 0 &\text{otherwise } \end{array}\right.
\ \ (x, y\in X).
\end{equation}
\end{defi}
Let $\mathcal{A}'$ be the linear space spanned by  the matrices $M^t_{ij}$, $(i,j,t)\in \mathcal{I}'$.
It is easy to check  that $\mathcal{A}'$  is a matrix $\mathbb{C}$$\ast$-algebra with
 the basis $M^t_{i,j}$, $(i,j,t)\in \mathcal{I}'$. We next show $\mathcal{A}'$ coincides with $T$, where $T:=T({\rm \textbf{0}})$ is the Terwilliger algebra of $\square_{2D+1}$. Let $A_1$ and $E^*_i=E^*_i$$(${\rm \textbf{0}}$)$ be the adjacency matrix and the $i${\rm th} dual idempotent of $\square_{2D+1}$, respectively.
\begin{pro}
With  Definition {\rm \ref{def2}}, we have
\begin{itemize}
\item[\rm (i)] $M^{i}_{i,i}=E^*_i\ (0\leq i\leq D)$;
\item[\rm (ii)]
$M^{i-1}_{i-1, i}=E^*_{i-1}A_1E^*_{i}$,\ $M^{i-1}_{i, i-1}=E^*_{i}A_1E^*_{i-1}\ (0\leq i\leq D)$;
\item[\rm (iii)] $M^k_{k+i,k}=\frac{1}{i!}M^{k+i-1}_{k+i,k+i-1}M^{k+i-2}_{k+i-1,k+i-2}\cdots M^{k}_{k+1,k}\ (1\leq i\leq D-k)$;
\item[\rm (iv)] $M^{k-i}_{k-i,k}=\frac{1}{i!}M^{k-i}_{k-i,k-i+1}M^{k-i+1}_{k-i+1,k-i+2}\cdots M^{k-1}_{k-1,k}\ (1\leq i\leq k)$;
\item[\rm (v)] $M_{i,j}^{t}=\sum_{k=0}^{D}(-1)^{k-t}{k\choose t}M_{i,k}^{k}M_{k,j}^{k}$.
\end{itemize}
\end{pro}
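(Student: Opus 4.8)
The plan is to follow the template of the even case (Propositions 3.8, 3.9, 3.11) but to exploit the fact that for $n=2D+1$ every vertex $(u,u')\in X$ has $|u|<|u'|$, so that $X$ is in bijection with the subsets of $S$ of size at most $D$ via $(u,u')\mapsto u$; under this bijection $\partial(\textbf{0},x)=|x_1|$, and, more generally, the ``small parts'' of two vertices at distance $\leq D$ from $\textbf{0}$ behave like subsets in the Boolean lattice with no middle-level degeneracy. This removes all of the factor-of-$2$ and diameter-level corrections that complicate Subsection 3.1, and in particular makes (v) a single uniform identity rather than the seven-case Proposition 3.8.

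Parts (i) and (ii) will be read off from the entries. For (i): if $(x,y)\in X^{\textbf{0}}_{i,i,i}$ then $|x_1|=|y_1|=i$ and $|x_1\cap y_1|=i$, forcing $x_1=y_1$ and hence $x=y$; so $M^{i}_{i,i}$ is the diagonal $0$--$1$ matrix supported on $\{x:|x_1|=i\}=\{x:\partial(\textbf{0},x)=i\}$, which is $E^*_i$. For (ii) (with $i\geq 1$): the $(x,y)$-entry of $E^*_{i-1}A_1E^*_i$ equals $1$ exactly when $\partial(\textbf{0},x)=i-1$, $\partial(\textbf{0},y)=i$ and $\partial(x,y)=1$; since $|x_1|=i-1\leq D-1$ and $|y_2|=2D+1-i\geq D+1$ we get $|x_1\triangle y_2|\geq |y_2|-|x_1|\geq 2$, so $\partial(x,y)=1$ forces $|x_1\triangle y_1|=1$, i.e.\ $x_1\subseteq y_1$ and $|x_1\cap y_1|=i-1$, which is precisely membership in $X^{\textbf{0}}_{i-1,i,i}$; the transpose statement is identical.

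For (iii) I would induct on $i$. The base case $i=1$ is trivial, and the inductive step rests on the identity $M^{k+i-1}_{k+i,k+i-1}\,M^{k}_{k+i-1,k}=i\,M^{k}_{k+i,k}$: the $(x,y)$-entry of the left side with $|x_1|=k+i$, $|y_1|=k$ counts the $z\in X$ with $|z_1|=k+i-1$ and $y_1\subseteq z_1\subseteq x_1$, and since $z$ is determined by $z_1$ (no middle level) and $|x_1\setminus y_1|=i$ there are exactly $i$ such when $y_1\subseteq x_1$ and none otherwise. Combining with the inductive hypothesis $M^{k}_{k+i-1,k}=\frac{1}{(i-1)!}M^{k+i-2}_{k+i-1,k+i-2}\cdots M^{k}_{k+1,k}$ gives (iii). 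Then (iv) follows by transposing (iii) (the transpose of $M^{t}_{a,b}$ is $M^{t}_{b,a}$, which reverses the product) and relabelling $k\mapsto k-i$; the index ranges match since $k+i\leq D$ in (iii) corresponds to $k\leq D$ after relabelling, while $k\geq 0$ there corresponds to $i\leq k$ in (iv).

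Finally, for (v): a subset count gives $M^{k}_{i,k}\,M^{k}_{k,j}=\sum_{l=0}^{D}\binom{l}{k}M^{l}_{i,j}$, since the $(x,y)$-entry of the product with $|x_1|=i$, $|y_1|=j$ counts the $z$ with $|z_1|=k$ and $z_1\subseteq x_1\cap y_1$, of which there are $\binom{|x_1\cap y_1|}{k}=\binom{l}{k}$ when $(x,y)\in X^{\textbf{0}}_{i,j,l}$; here $(i,j,l)\in\mathcal{I}'$ automatically whenever $0\leq l\leq\min(i,j)$ because $i,j\leq D$ makes $i+j-l\leq 2D$. Substituting and swapping the order of summation,
\[
\sum_{k=0}^{D}(-1)^{k-t}\binom{k}{t}M^{k}_{i,k}M^{k}_{k,j}
=\sum_{l=0}^{D}\Big(\sum_{k=0}^{D}(-1)^{k-t}\binom{k}{t}\binom{l}{k}\Big)M^{l}_{i,j}
=\sum_{l=0}^{D}\delta_{t,l}\,M^{l}_{i,j}=M^{t}_{i,j},
\]
where the inner sum equals $\delta_{t,l}$ by Lemma \ref{lem 2.8}(ii) (terms with $k>l$ vanish since $\binom{l}{k}=0$). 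The only part requiring genuine care — and the closest thing to an obstacle — is confirming that the almost-bipartite structure really kills every degeneracy (the step $\partial(x,y)=1\Rightarrow|x_1\triangle y_1|=1$, and the step ``$z$ is determined by $z_1$''), so that none of the corrective factors from the even case reappear; once that is in hand, everything is a routine specialization of Subsection 3.1.
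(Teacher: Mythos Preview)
Your proposal is correct and follows essentially the same approach as the paper, which simply writes ``Similar to the proofs of Propositions \ref{pro2}, \ref{pro 5} and \ref{pro 8}(i)'': entrywise comparison for (i)--(ii), induction via the product identity for (iii), transposition for (iv), and the subset count $M^{k}_{i,k}M^{k}_{k,j}=\sum_l\binom{l}{k}M^{l}_{i,j}$ inverted by Lemma~\ref{lem 2.8}(ii) for (v). Your write-up in fact supplies more detail than the paper does, and your remark that the absence of a middle level in the $2D+1$ case is what eliminates the corrective factors of Subsection~3.1 is exactly the point.
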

\begin{proof}
Similar to the proofs of Propositions \ref{pro2}, \ref{pro 5} and \ref{pro 8}(i).
\end{proof}
\begin{thm}
For $\square_{2D+1}$, the algebras $\mathcal{A}'$ and $T$ coincide.
\end{thm}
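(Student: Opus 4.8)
The plan is to follow the proof of Theorem~\ref{thm2}, establishing the inclusions $T\subseteq\mathcal{A}'$ and $\mathcal{A}'\subseteq T$ separately and using the preceding proposition (the odd-case counterpart of Propositions~\ref{pro2}, \ref{pro 5} and~\ref{pro 8}(i)) in place of Propositions~\ref{pro2}--\ref{pro 8}. The one genuinely new feature is that $\square_{2D+1}$ is \emph{almost}-bipartite rather than bipartite: besides the edges joining consecutive levels there is a set of edges inside the top level~$D$, and these will contribute an extra summand $M^0_{D,D}$ to the adjacency matrix. Since $(D,D,0)\in\mathcal{I}'$, this summand is again one of the basis matrices, so it causes no new difficulty.

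For $T\subseteq\mathcal{A}'$: recall that $T$ is generated by $A_1$ together with $E^*_0,\dots,E^*_D$, and that $\mathcal{A}'$ is a $\mathbb{C}$-algebra; hence it suffices to show these generators lie in $\mathcal{A}'$. Part~(i) of the preceding proposition gives $E^*_i=M^i_{i,i}\in\mathcal{A}'$. For $A_1$, I would use the identity $\partial(x,y)=\min\{i+j-2t,\ 2D+1-(i+j-2t)\}$ valid for $(x,y)\in X^{\textbf{0}}_{i,j,t}$, recorded in Subsection~3.2: a pair $(x,y)$ lies at distance~$1$ exactly when $i+j-2t=1$ or $i+j-2t=2D$. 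Under the constraints $0\le t\le i,j\le D$ the first equation is solved only by $(i,j,t)=(i,i-1,i-1)$ and $(i-1,i,i-1)$ with $1\le i\le D$, and the second only by $(i,j,t)=(D,D,0)$. Therefore
\[
A_1=\sum_{i=1}^{D}\bigl(M^{i-1}_{i,i-1}+M^{i-1}_{i-1,i}\bigr)+M^0_{D,D}\in\mathcal{A}',
\]
and $T\subseteq\mathcal{A}'$ follows.

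For $\mathcal{A}'\subseteq T$: it suffices to show $M^t_{i,j}\in T$ for each $(i,j,t)\in\mathcal{I}'$. Parts~(i) and~(ii) of the preceding proposition place $M^i_{i,i}$, $M^{i-1}_{i-1,i}$ and $M^{i-1}_{i,i-1}$ in $T$. Parts~(iii) and~(iv) then express each one-sided matrix $M^k_{k+i,k}$ (respectively $M^{k-i}_{k-i,k}$) as a scalar multiple of a product of matrices of the previous type, so in particular every $M^k_{i,k}$ and $M^k_{k,j}$ with $(i,k,k),(k,j,k)\in\mathcal{I}'$ belongs to $T$. Finally part~(v) writes $M^t_{i,j}=\sum_{k=0}^{D}(-1)^{k-t}\binom{k}{t}M^k_{i,k}M^k_{k,j}$ as a linear combination of products of elements of $T$, so $M^t_{i,j}\in T$. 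Combining the two inclusions yields $\mathcal{A}'=T$.

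The only step I expect to require real care is the decomposition of $A_1$: one must verify that level~$D$ is the unique level supporting intra-level edges---this is exactly the almost-bipartiteness of $\square_{2D+1}$, and it drops out of the distance formula above, since $i=j$ combined with $i+j-2t\in\{1,2D\}$ forces $i=j=D$, $t=0$---and then identify that set of edges with $M^0_{D,D}$. Every other step is lighter than its counterpart in Subsection~3.1: in $\square_{2D+1}$ no vertex has $|x_1|=|x_2|$, so the ``$\max$'' disappears from the definition of $\partial(x,y,z)$ and the special cases~(ii)--(vii) of Proposition~\ref{pro 8} collapse into the single identity~(v), just as the preceding proposition already records.
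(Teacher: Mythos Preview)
Your proposal is correct and follows exactly the same approach as the paper: the paper's proof reads simply ``Similar to the proof of Theorem~\ref{thm2}. Note that $A_1=\sum_{i=1}^{D}(M^{i-1}_{i,i-1}+M^{i-1}_{i-1,i})+M^{0}_{D,D}$,'' and you have correctly identified and justified the one genuinely new ingredient, namely the extra summand $M^{0}_{D,D}$ coming from the almost-bipartite structure.
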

\begin{proof}
Similar to the proof of Theorem \ref{thm2}. Note that $A_1=\sum^{D}_{i=1}(M^{i-1}_{i,i-1}+M^{i-1}_{i-1,i})+M^{0}_{D,D}$.
\end{proof}
\section{Block diagonalization of $T$ of $\square_n$}
In this section,
we study a block-diagonalization of $T$ of $\square_{n}$ by using the theory of irreducible $T$-modules
together with the obtained basis in Section $3$. We treat two cases of $n$ even and odd separately.
\subsection{Block diagonalization of $T$ of $\square_{2D}$}
\begin{pro}\label{pro16}
For $\square_{2D}$, let $W$ denote  an irreducible $T$-module
with endpoint $r$ and diameter $d^*\ (0\leq r,d^*\leq D)$. Then $W$ is thin,
$r+d^*=D\ (r \text{even})$ or $r+d^*=D-1\ (r \text{odd})$, and the isomorphism class of $W$ is determined only by $r$.
\end{pro}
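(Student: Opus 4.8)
The plan is to sidestep a direct module-by-module analysis and instead read everything off the double-centralizer relationship between $T$ and the group $\mathrm{Aut}_{\mathbf 0}(X)=\mathrm{sym}(2D)$. By Proposition \ref{pro 16} the matrices $M^t_{i,j}$, $(i,j,t)\in\mathcal I$, form a basis of $\mathcal A=T$ and, being the orbital matrices of the $\mathrm{Aut}_{\mathbf 0}(X)$-action on $X\times X$, also form a basis of the commutant of $\mathrm{Aut}_{\mathbf 0}(X)$ on $V=\mathbb C^X$; hence $T$ \emph{is} that full commutant. The double centralizer theorem then yields an orthogonal bimodule decomposition $V\cong\bigoplus_{\lambda}W_{\lambda}\otimes U_{\lambda}$, with $\lambda$ indexing the irreducible $\mathrm{sym}(2D)$-modules $U_\lambda$ that occur in $V$, each $W_{\lambda}$ an irreducible $T$-module, and the $W_{\lambda}$ exhausting the irreducible $T$-modules without repetition. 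Because each $E^*_i$ lies in $T$ and commutes with $\mathrm{sym}(2D)$, this refines to $E^*_iV\cong\bigoplus_{\lambda}(E^*_iW_{\lambda})\otimes U_{\lambda}$, so $\dim E^*_iW_{\lambda}$ is exactly the multiplicity of $U_{\lambda}$ in the $\mathrm{sym}(2D)$-module $E^*_iV$. The proposition is thereby reduced to decomposing the ``levels'' $E^*_iV$ under $\mathrm{sym}(2D)$.

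Next I would identify these level modules. For $0\le i\le D-1$, a vertex at distance $i$ from $\mathbf 0$ is an ordered pair $(u,u')$ with $|u|=i$, so $E^*_iV$ is the permutation module of $\mathrm{sym}(S)$ on the $i$-subsets of $S$, which by Young's rule decomposes multiplicity-freely as $\bigoplus_{j=0}^{i}S^{(2D-j,j)}$ (Specht modules). For $i=D$ a vertex at distance $D$ is the \emph{unordered} pair $\{u,S\setminus u\}$ with $|u|=D$, so $E^*_DV$ is the $+1$-eigenspace of the complementation involution $\tau\colon u\mapsto S\setminus u$ on $\mathbb C^{\binom{S}{D}}$; since $\mathbb C^{\binom{S}{D}}\cong\bigoplus_{j=0}^{D}S^{(2D-j,j)}$ is multiplicity-free and $\tau$ commutes with $\mathrm{sym}(S)$, $\tau$ acts on $S^{(2D-j,j)}$ by a scalar $\varepsilon_j\in\{+1,-1\}$ and $E^*_DV\cong\bigoplus_{j:\,\varepsilon_j=1}S^{(2D-j,j)}$.

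The crux is the computation $\varepsilon_j=(-1)^j$. It suffices to evaluate $\tau$ on a single nonzero vector of $S^{(2D-j,j)}\subseteq\mathbb C^{\binom{S}{D}}$. For $T\in\binom{S}{j}$ put $g_T\in\mathbb C^{\binom{S}{D}}$, $g_T(u):=(-1)^{|T\cap u|}$; these vectors span precisely the degree-$j$ eigenspace $S^{(2D-j,j)}$ of the Johnson scheme on $\binom{S}{D}$ (a standard fact about the harmonic decomposition of $\binom{S}{D}$), and $(\tau g_T)(u)=(-1)^{|T\cap(S\setminus u)|}=(-1)^{|T|-|T\cap u|}=(-1)^{j}g_T(u)$. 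Hence $\varepsilon_j=(-1)^j$, so the Specht module $S^{(2D-j,j)}$ occurs in $E^*_iV$ exactly when $j\le i\le D-1$, or when $i=D$ and $j$ is even. (In the spirit of Section~3 one can also see thinness directly: $E^*_iTE^*_i$ is spanned by the pairwise-commuting symmetric matrices $M^t_{i,i}$, hence is commutative, which already forces $\dim E^*_iW\le1$ for every irreducible $W$.)

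Finally I would assemble the statement. By the first step the irreducible $T$-modules are the $W_j$ attached to $S^{(2D-j,j)}$ for $j\in\{0,1,\dots,D-1\}$, together with $j=D$ when $D$ is even; each $\dim E^*_iW_j\le1$, so $W_j$ is thin; its endpoint is $j$; and $E^*_iW_j\ne0$ for $j\le i\le D-1$ in all cases and, at $i=D$, precisely when $j$ is even. Consequently the diameter of $W_j$ is $d^*=D-j$ with $r+d^*=D$ when $r=j$ is even, and $d^*=D-1-j$ with $r+d^*=D-1$ when $r=j$ is odd; and since $W_j$ is the unique irreducible $T$-module matched with the single isotypic component $S^{(2D-j,j)}$, its isomorphism class is determined by $r=j$ alone. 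The main obstacle is the third step, determining the sign $\varepsilon_j$, i.e., understanding how complementation of the middle slice $\binom{S}{D}$ acts on the $\mathrm{sym}(2D)$-isotypic components; the remaining ingredients are bookkeeping or classical ($\mathrm{sym}$-representation theory and the double centralizer theorem). A secondary point one must check is that $T$ equals the \emph{full} commutant, which is exactly Proposition \ref{pro 16} together with $\{M^t_{i,j}\}$ being a basis.
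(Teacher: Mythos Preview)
Your approach is genuinely different from the paper's, which simply cites \cite{cau} and \cite{ter3} for the bipartite $P$- and $Q$-polynomial theory. You instead give a self-contained argument via the double centralizer theorem: since $T=\mathcal A$ (Theorem~\ref{thm2}) equals the full commutant of $\mathrm{Aut}_{\mathbf 0}(X)\cong\mathrm{sym}(2D)$ on $V$ (Proposition~\ref{pro 16}), the irreducible $T$-modules are in bijection with the $\mathrm{sym}(2D)$-types occurring in $V$, and $\dim E^*_iW_\lambda$ equals the multiplicity of $U_\lambda$ in $E^*_iV$. This is elegant and makes the parity dichotomy for $r+d^*$ transparent once one knows how complementation $\tau$ acts on the Specht pieces of $E^*_DV$. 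The price is that you import more representation theory (Young's rule, the double centralizer theorem) than the paper's route, but you avoid the machinery of Leonard pairs/thin modules from \cite{cau,ter3}.

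There is, however, a real gap in your computation of $\varepsilon_j$. The vectors $g_T(u)=(-1)^{|T\cap u|}$ with $|T|=j$ do \emph{not} span precisely $S^{(2D-j,j)}$ inside $\mathbb C^{\binom{S}{D}}$: already for $2D=4$, $j=2$, one checks that $g_{\{1,2\}}=(1,-1,-1,-1,-1,1)$ has a nonzero component in $V_0$ as well as $V_2$. In general $\mathrm{span}\{g_T:|T|=j\}$ sits inside $\bigoplus_{l\le j,\ l\equiv j\ (2)}S^{(2D-l,l)}$, not in $S^{(2D-j,j)}$ alone. Your calculation $\tau g_T=(-1)^{j}g_T$ is correct, but it only shows that this larger span lies in the $(-1)^{j}$-eigenspace of $\tau$; to conclude $\varepsilon_j=(-1)^j$ you still need that $g_T$ has a nonzero projection to $S^{(2D-j,j)}$, or an induction on $j$. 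The cleanest repair is to observe that $\tau$ is exactly the distance-$D$ matrix $A_D$ of the Johnson scheme $J(2D,D)$, whose eigenvalue on $V_j$ is the Eberlein value $p_D(j)=(-1)^{j}$; alternatively, a dimension count (the $+1$-eigenspace of $\tau$ has dimension $\tfrac12\binom{2D}{D}=\sum_{j\ \mathrm{even}}\dim S^{(2D-j,j)}$) together with your parity computation on the $g_T$'s pins down $\varepsilon_j=(-1)^j$. With that fix the rest of your argument goes through.
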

\begin{proof}
See \cite[Lemma 9.2, Theorem 13.1]{cau} and \cite[pp. 204--205]{ter3}. Note that the endpoint here is denoted by dual endpoint in \cite{ter3}.
\end{proof}
Based on Definition \ref{def1} and Proposition \ref{pro16}, for  $r = 0,1,\ldots,D$ define the linear vector space $\mathcal{L}_r$ as follows.
\begin{align}
\mathcal{L}_r:=\{\xi\in V:=\mathbb{C}^{X}|M^{r-1}_{r-1,r}\xi=0,\ \xi_{(x_1,x_2)}=0 \ \text{if}\ |x_1|\neq r\}.\nonumber
\end{align}
The  space $\mathcal{L}_r$ is in fact connected to the irreducible  $T$-modules. For discussional convenience, denote by $\mathcal{W}_r\ (0\leq r \leq D)$ the  $T$-module spanned by  all the irreducible $T$-modules with endpoint $r$, and define  $\mathcal{W}_r:=0$  if there does not exists such irreducible $T$-module.
\begin{pro}\label{pro3}
For $\square_{2D}$, let $W$  denote  an irreducible $T$-module
with endpoint $r$, diameter $d^*\ (0\leq r,d^*\leq D)$ and let $\mathcal{W}_r$ be defined as above. Then  the following {\rm (i)--(iv)} hold.
\begin{itemize}
\item[\rm(i)] $\mathcal{L}_r=E^*_r\mathcal{W}_r$.
\item[\rm(ii)] Up to isomorphism, $\mathcal{W}_r$ is ${2D\choose r}-{2D\choose r-1}$ copies of $W$ for $0\leq r\leq D-1$;  $\mathcal{W}_D$ is $\frac{1}{2}{2D\choose D}-\frac{D-1}{2D}{2D\choose D-1}$ copies of $W$ for $r=D$ $(D \ even)$;  $\mathcal{W}_D=0$ for $r=D$ $(D\ odd)$.
\item[\rm(iii)] Pick any $0\neq \xi\in \mathcal{L}_r$, then $0\neq M^r_{r+i,r}\xi\in E^*_{r+i}\mathcal{W}_r$ for $0\leq i\leq d^*$.
\item[\rm(iv)] Pick any $0\neq \xi\in \mathcal{L}_r$, then $M^{r-i}_{r-i,r}\xi=0$ for $1\leq i\leq r$.
\end{itemize}
\end{pro}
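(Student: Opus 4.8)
The plan is to prove (i) first and deduce (ii)--(iv) from it. Fix an orthogonal decomposition $V=\bigoplus_{s}W^{(s)}$ of the standard module into irreducible $T$-modules. By Proposition \ref{pro16} each $W^{(s)}$ is thin and its isomorphism type — hence its diameter $d^{*}_{s}$ — is determined by its endpoint $r_{s}$, with $r_{s}+d^{*}_{s}=D$ when $r_{s}$ is even and $r_{s}+d^{*}_{s}=D-1$ when $r_{s}$ is odd; moreover $\dim E^{*}_{i}W^{(s)}=1$ for $r_{s}\le i\le r_{s}+d^{*}_{s}$ and $\dim E^{*}_{i}W^{(s)}=0$ otherwise, by Lemma \ref{lem1}(ii) and thinness. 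Set $\mathcal{W}_{r}=\bigoplus_{r_{s}=r}W^{(s)}$; this coincides with the module in the statement, since by Proposition \ref{pro16} all irreducibles with endpoint $r$ are mutually isomorphic. Then $\dim E^{*}_{i}\mathcal{W}_{r}=m_{r}$ for $r\le i\le r+d^{*}_{r}$ and $=0$ otherwise, where $m_{r}$ denotes the multiplicity appearing in (ii).

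To prove (i), note that $M^{r-1}_{r-1,r}=E^{*}_{r-1}A_{1}E^{*}_{r}$ by Proposition \ref{pro2}(ii), so $\mathcal{L}_{r}=\ker\big(E^{*}_{r-1}A_{1}|_{E^{*}_{r}V}\big)$ (for $r=0$ the equation is vacuous and $\mathcal{L}_{0}$ is the line spanned by the characteristic vector of $\mathbf{0}$, which equals $E^{*}_{0}\mathcal{W}_{0}$). Decompose $E^{*}_{r}V=\bigoplus_{s}E^{*}_{r}W^{(s)}$. If $r_{s}>r$ then $E^{*}_{r}W^{(s)}=0$; if $r_{s}=r$ then $E^{*}_{r-1}A_{1}$ annihilates $E^{*}_{r}W^{(s)}$ because $E^{*}_{r-1}W^{(s)}=0$; and if $r_{s}<r$ with $E^{*}_{r}W^{(s)}\neq 0$, i.e.\ $r\le r_{s}+d^{*}_{s}$, then by Lemma \ref{lem1}(iii) the map $E^{*}_{r-1}A_{1}\colon E^{*}_{r}W^{(s)}\to E^{*}_{r-1}W^{(s)}$ is a nonzero map between one-dimensional spaces, hence injective. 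Since $E^{*}_{r-1}A_{1}$ preserves each $W^{(s)}$ and the sum is orthogonal, $\ker\big(E^{*}_{r-1}A_{1}|_{E^{*}_{r}V}\big)=\bigoplus_{r_{s}=r}E^{*}_{r}W^{(s)}=E^{*}_{r}\mathcal{W}_{r}$, which is (i).

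For (ii), part (i) gives $m_{r}=\dim E^{*}_{r}\mathcal{W}_{r}=\dim\mathcal{L}_{r}$, while comparing dimensions in $E^{*}_{i}V=\bigoplus_{r'}E^{*}_{i}\mathcal{W}_{r'}$ yields $|\{x\in X:|x_{1}|=i\}|=\sum_{r':\,r'\le i\le r'+d^{*}_{r'}}m_{r'}$, the left side being $\binom{2D}{i}$ for $0\le i\le D-1$ and $\tfrac{1}{2}\binom{2D}{D}$ for $i=D$. By the parity formula for $r'+d^{*}_{r'}$, the case $i\le D-1$ becomes $\binom{2D}{i}=\sum_{r'=0}^{i}m_{r'}$, so $m_{r}=\binom{2D}{r}-\binom{2D}{r-1}$ for $0\le r\le D-1$; the case $i=D$ becomes $\tfrac{1}{2}\binom{2D}{D}=\sum_{r'\ \mathrm{even},\ 0\le r'\le D}m_{r'}$. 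For $D$ even, substituting the known $m_{r'}$ converts the right side into the telescoping alternating sum $\sum_{j=0}^{D-2}(-1)^{j}\binom{2D}{j}+m_{D}=\binom{2D-1}{D-2}+m_{D}$, whence $m_{D}=\tfrac{1}{2}\binom{2D}{D}-\binom{2D-1}{D-2}=\tfrac{1}{2}\binom{2D}{D}-\tfrac{D-1}{2D}\binom{2D}{D-1}$; for $D$ odd there is no endpoint-$D$ module by Proposition \ref{pro16}, so $\mathcal{W}_{D}=0$.

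For (iii) and (iv), unwinding the factorizations in Proposition \ref{pro 5} together with Proposition \ref{pro2}(ii) shows that, for suitable positive constants $c,c'$, one has $M^{r}_{r+i,r}=\tfrac{1}{c}\,E^{*}_{r+i}A_{1}E^{*}_{r+i-1}A_{1}\cdots E^{*}_{r+1}A_{1}E^{*}_{r}$ for $0\le i\le d^{*}_{r}$ and $M^{r-i}_{r-i,r}=\tfrac{1}{c'}(\cdots)\,M^{r-1}_{r-1,r}$ for $1\le i\le r$. Thus (iv) is immediate, since $M^{r-1}_{r-1,r}\xi=0$ for $\xi\in\mathcal{L}_{r}$. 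For (iii), write $\xi=\sum_{s}\xi^{(s)}$ with $\xi^{(s)}\in E^{*}_{r}W^{(s)}$ using (i); the displayed formula for $M^{r}_{r+i,r}$ is, up to the scalar $c$, exactly the operator of Lemma \ref{lem2}, so $M^{r}_{r+i,r}\xi^{(s)}$ is a nonzero vector of $E^{*}_{r+i}W^{(s)}$ whenever $\xi^{(s)}\neq 0$ and is $0$ otherwise; hence $M^{r}_{r+i,r}\xi=\sum_{s}M^{r}_{r+i,r}\xi^{(s)}$ lies in $E^{*}_{r+i}\mathcal{W}_{r}$ and is nonzero, being a sum of mutually orthogonal vectors not all of which vanish. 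The one delicate point is (i): one must exclude any contribution from irreducibles of endpoint less than $r$ from $\ker(E^{*}_{r-1}A_{1})$, which is exactly where thinness (turning the relevant maps between consecutive $E^{*}$-layers into maps of one-dimensional spaces) and Lemma \ref{lem1}(iii) are needed; everything else is formal, apart from the binomial simplification $\binom{2D-1}{D-2}=\tfrac{D-1}{2D}\binom{2D}{D-1}$ used in (ii).
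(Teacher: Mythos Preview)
Your proof is correct and follows essentially the same route as the paper's: part (i) via the orthogonal decomposition of $V$ into irreducibles together with Lemma~\ref{lem1}(ii),(iii) and Proposition~\ref{pro2}(ii); part (ii) via the dimension count $\dim E^{*}_{i}V=\sum_{r'\le i\le r'+d^{*}_{r'}}m_{r'}$; and parts (iii),(iv) via the factorizations of Proposition~\ref{pro 5} combined with Lemma~\ref{lem2} and Lemma~\ref{lem1}(ii). Your treatment is somewhat more explicit than the paper's (you spell out why vectors from modules of endpoint $<r$ cannot lie in $\ker(E^{*}_{r-1}A_{1}|_{E^{*}_{r}V})$, and you carry out the telescoping $\sum_{j=0}^{D-2}(-1)^{j}\binom{2D}{j}=\binom{2D-1}{D-2}$ rather than citing the final value), but the ideas coincide.
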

\begin{proof}
(i) We suppose $\mathcal{L}_r\neq 0$ and $\mathcal{W}_r\neq 0$. It is easy to see  that $0\neq \xi\in \mathcal{L}_r$ if and only if  $E^*_r\xi\neq 0$,
   $E^*_i\xi=0$ ($i\neq r$) and $E^*_{r-1}A_1E^*_r\xi=0$.
Pick any $0\neq \xi'\in E^*_r\mathcal{W}_r$. We have $\xi'\in \mathcal{L}_r$ since
$E^*_r\xi'\neq 0$,
   $E^*_i\xi'=0$ ($i\neq r$) and $E^*_{r-1}A_1E^*_r\xi'\in E^*_{r-1}(E^*_{r-1}\mathcal{W}_r+E^*_r\mathcal{W}_r+E^*_{r+1}\mathcal{W}_r)=0$, which is from Lemma \ref{lem1}(i),(ii). Thus $E^*_r\mathcal{W}_r\subseteq \mathcal{L}_r$. Conversely, pick any  $0\neq\xi'\in \mathcal{L}_r$.
 By  $E^*_r\xi'\neq 0$ and $E^*_i\xi'=0$ ($i\neq r$), we have $\xi'\in  E^*_r V$. Then
by $E^*_{r-1}A_1E^*_r\xi'=0$ and Lemma \ref{lem1}(ii),(iii), we have $\xi'\in  E^*_r\mathcal{W}_r$ since $V$ is the orthogonal direct sum of $\mathcal{W}_0+\mathcal{W}_1+\cdots+\mathcal{W}_{D}$.  Thus $\mathcal{L}_r\subseteq E^*_r\mathcal{W}_r$.\\
(ii) To prove this claim, it suffices to give the multiplicity of $W$ since the isomorphism class of $W$ is determined only by $r$. It is  clear that there exists a decomposition of irreducible $T$-modules for the standard module $V$:
\begin{equation}\label{hlheq30}
V=\sum^n_{h=0} W_{h} \ \ (\text {orthogonal\ direct\ sum}),
\end{equation}
Applying $E^*_r\ (0\leq r\leq D)$ to the both sides of \eqref{hlheq30}, we obtain dim$(E^*_rV)=\sum^n_{h=0} {\rm dim}(E^*_rW_{h})$.\\
(iia) For $0\leq r\leq D-1$, by Proposition \ref{pro16} we know that for each $h\ (0\leq h\leq n)$, dim$(E^*_rW_{h})=1$
if the endpoint of $W_h$ is at most $r$, and dim$(E^*_rW_{h})=0$
if the endpoint of $W_h$ is greater than $r$. Moreover, for every $\rho\ (0\leq \rho\leq D)$, there
exist exactly $m(\rho,d_{\rho})$ modules in \eqref{hlheq30} with endpoint $\rho$ and diameter $d_{\rho}$, where $m(\rho,d_{\rho})$ denotes the multiplicity of the module with endpoint $\rho$ and diameter $d_{\rho}$. Thus we have
\begin{equation}\label{hlheq33}
{\rm dim}(E^*_rV)=\sum_{\rho\leq r} m(\rho,d_{\rho}),
\end{equation}
which implies
\begin{align}
\ \ \ \ \ \ \ \ \ \ \ \ \ \ \ \ \ \ \ \  \ \ \ \ \ \ \  m(r,d^*)&={\rm dim}(E^*_rV)-{\rm dim}(E^*_{r-1}V)\nonumber\\
      &={2D\choose r}-{2D\choose r-1}.\nonumber\ \ (\text{by \cite[p, 264]{bcn} and \cite[p, 195]{ban}})
\end{align}
(iib) For $r=D$, it is easy to see $m(D,0)=0$ if $D$ is odd. Now, we suppose that $D$ is even. Similar to obtaining  \eqref{hlheq33}, we have ${\rm dim}(E^*_DV)=\sum_{\stackrel{\rho\leq D} {\rho\ even}}m(\rho,D-\rho)$.
So
\begin{align}
 m(D,0)&={\rm dim}(E^*_DV)-(m(0,D)+m(2,D-2)+\cdots+m(D-2,2))\nonumber\\
       &=\frac{1}{2}{2D\choose D}-\frac{D-1}{2D}{2D\choose D-1}.\nonumber
\end{align}
(iii)  Immediate from above (i), Proposition \ref{pro 5}(i),(ii) and Lemma \ref{lem2}.\\
(iv)  Immediate from above (i), Proposition \ref{pro 5}(iii) and Lemma \ref{lem1}(ii).
\end{proof}
\begin{cor}\label{pro 2}
For $\square_{2D}$, the following {\rm (i), (ii)} hold.
\begin{itemize}
\item[\rm (i)] For $0\leq r\leq D-1$,  $dim(\mathcal{L}_r)={2D\choose r}-{2D\choose r-1}.$
\item[\rm (ii)] For $r=D$,  $dim(\mathcal{L}_D)=\left\{\begin{array}{ll}\frac{1}{2}{2D\choose D}-\frac{D-1}{2D}{2D\choose D-1} &\text{if $D$ is even}\\[.2cm]
 0 &\text{if $D$ is odd}.\end{array}\right.$
\end{itemize}
\end{cor}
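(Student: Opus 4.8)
The plan is to read $\dim(\mathcal{L}_r)$ off directly from the module structure established in Proposition~\ref{pro3}. First I would invoke Proposition~\ref{pro3}(i) to replace $\mathcal{L}_r$ by $E^*_r\mathcal{W}_r$, so the task reduces to computing $\dim(E^*_r\mathcal{W}_r)$.

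Next I would recall that, by its definition, $\mathcal{W}_r$ is the span of all irreducible $T$-modules of endpoint $r$, and since the standard module $V$ is an orthogonal direct sum of irreducible $T$-modules, $\mathcal{W}_r$ is itself an orthogonal direct sum of $m(r,d^*)$ copies of the (unique up to isomorphism, by Proposition~\ref{pro16}) irreducible $T$-module $W$ with endpoint $r$ and diameter $d^*$. By Proposition~\ref{pro16} each such summand is thin with endpoint exactly $r$, so $\dim(E^*_rW)=1$; applying $E^*_r$ summand by summand to the orthogonal decomposition of $\mathcal{W}_r$ then gives $\dim(E^*_r\mathcal{W}_r)=m(r,d^*)$. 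Hence $\dim(\mathcal{L}_r)=m(r,d^*)$.

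Finally I would substitute the explicit value of $m(r,d^*)$ recorded in Proposition~\ref{pro3}(ii): for $0\le r\le D-1$ this is $\binom{2D}{r}-\binom{2D}{r-1}$, which proves (i); for $r=D$ with $D$ even it is $\tfrac{1}{2}\binom{2D}{D}-\tfrac{D-1}{2D}\binom{2D}{D-1}$, and for $r=D$ with $D$ odd Proposition~\ref{pro3}(ii) gives $\mathcal{W}_D=0$, so that $\mathcal{L}_D=E^*_D\mathcal{W}_D=0$; this proves (ii). The only step requiring any care — essentially the sole obstacle — is the identity $\dim(E^*_r\mathcal{W}_r)=m(r,d^*)$: one must observe that each endpoint-$r$ irreducible summand contributes exactly one dimension to $E^*_r\mathcal{W}_r$ (nonzero because its endpoint is $r$, at most one by thinness) and that these contributions remain in internal direct sum because the summands are mutually orthogonal. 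Granting this, the corollary follows at once from the formulas already established.
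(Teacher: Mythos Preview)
Your argument is correct and follows exactly the route the paper intends: its proof reads simply ``Immediate from Proposition~\ref{pro3}(i), (ii),'' and you have spelled out precisely why that immediacy holds. There is nothing to add.
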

\begin{proof}
Immediate from Proposition \ref{pro3}(i), (ii).
\end{proof}

Propositions \ref{pro16}, \ref{pro3} and Corollary \ref{pro 2} imply the block sizes and block multiplicity of $T$. To describe this block diagonalization. We need consider the action of   matrices $M^t_{ij}$,  $(i,j,t)\in \mathcal{I}$
 on $M^r_{j,r}\xi$, where $0\neq \xi\in \mathcal{L}_r\ (0\leq r\leq D)$.
\begin{pro}\label{pro 9}
For all $(i,j,t)\in \mathcal{I}$, $r\in\{0,1,\ldots,D\}$ and for $\xi\in \mathcal{L}_r$, we have
\begin{itemize}
\item[\rm (i)] for $0\leq i,j\leq D-1$,
\begin{align}
 {2D-2r\choose i-r}M_{i,j}^{t}M_{j,r}^{r}\xi=\beta^r_{i,j,t}M_{i,r}^{r}\xi,\nonumber
\end{align}
where $\beta_{i,j,t}^{r}={2D-2r\choose i-r}\sum^{D-1}_{l=0}(-1)^{r-l}{r\choose l}{i-l\choose t-l}{2D-i-r+l\choose j-r-t+l};$
\item[\rm (ii)] for $i=D, 0\leq j\leq D-1$,
\begin{align*}
2{2D-2r\choose D-r} M_{D,j}^{t}M_{j,r}^{r}\xi=\beta^r_{D,j,t}M_{D,r}^{r}\xi,
\end{align*}
where $\beta_{D,j,t}^{r}=2{2D-2r\choose D-r}\big(\sum^{D-1}_{l=\lfloor\frac{r}{2}\rfloor+1}(-1)^{r-l}{r\choose l}\big({D-l\choose t-l}{D-r+l\choose j-t-r+l}+{D-r+l\choose t-r+l}{D-l\choose j-t-l}\big)\\
 \text{\ \ \ \ \ \ \ \  \ \ \ \ \ \ \ \ \ \ \ \ \ }    +{D-\frac{r}{2}\choose t-\frac{r}{2}}{D-\frac{r}{2}\choose j-t-\frac{r}{2}}(-1)^{\frac{r}{2}}{r\choose \frac{r}{2}}\big)$;
 \item[\rm (iii)] for $0\leq i\leq D-1, j=D$,
\begin{align*}
{2D-2r\choose i-r} M_{i,D}^{t}M_{D,r}^{r}\xi=\beta^r_{i,D,t}M_{i,r}^{r}\xi,
\end{align*}
where $\beta^r_{i,D,t}={2D-2r\choose i-r}\big(\sum^{D-1}_{l=0}(-1)^{r-l}{r\choose l}\big({i-l\choose t-l}{2D-r-i+l\choose D-r-t+l}+{i-l\choose t}{2D-r-i+l\choose D-t}\big)\big)$;
\item[\rm (iv)] for $i=j=D$ and $0\leq r\leq D-1$,
\begin{align*}
2{2D-2r\choose D-r} M_{D,D}^{t}M_{D,r}^{r}\xi=\beta^r_{D,D,t}M_{D,r}^{r}\xi,
\end{align*}
where $\beta^r_{D,D,t}=2{2D-2r\choose D-r}\big(\sum^{D-1}_{l=\lfloor\frac{r}{2}\rfloor+1}2(-1)^{r-l}{r\choose l}\big({D-l\choose t-l}{D-r+l\choose t}+{D-l\choose t}{D-r+l\choose D-t}\big)\\ \nonumber
  \text{\ \ \ \ \ \ \ \  \ \ \ \ \ \ \ \ \ \ \ \ \ }         +2(-1)^{\frac{r}{2}}{r\choose \frac{r}{2}}{D-\frac{r}{2}\choose D-t}{D-\frac{r}{2}\choose t}\big)$.
\item[\rm (v)] for $i=j=D$ and $r=D\ (D\ is\ even)$,
  \begin{align*}
 M_{D,D}^{t}M_{D,D}^{D}\xi=\beta^D_{D,D,t}\xi,
\end{align*}
where $\beta^D_{D,D,t}=(-1)^{D-t}{D\choose t}$ if  $t\geq \frac{D}{2}+1$ and  $\beta^D_{D,D,t}=\frac{1}{2}(-1)^{\frac{D}{2}}{D\choose \frac{D}{2}}$ if $t=\frac{D}{2}$.
\end{itemize}
\end{pro}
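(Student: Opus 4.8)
The plan is to split the computation into two independent pieces and then multiply them. The first piece is a \emph{reduction identity}: for $\xi\in\mathcal{L}_r$ and any admissible triple, $M^s_{i,r}\xi$ is a fixed scalar multiple of $M^r_{i,r}\xi$. The second is a closed \emph{product formula} expressing the matrix $M^t_{i,j}M^r_{j,r}$ as a linear combination $\sum_s c_s M^s_{i,r}$ of basis matrices. Granting both, applying the product formula to $\xi$ and substituting the reduction identity yields $M^t_{i,j}M^r_{j,r}\xi=\bigl(\sum_s c_s\gamma^r_s\bigr)M^r_{i,r}\xi$, and after multiplying by the normalising binomial $\binom{2D-2r}{i-r}$ (respectively $2\binom{2D-2r}{D-r}$) one checks that $\sum_s c_s\gamma^r_s$ is, term by term, exactly the bracketed expression defining $\beta^r_{i,j,t}$.

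For the reduction identity I would first prove that $M^k_{k,r}\xi=0$ whenever $0\le k\le r-1$. Indeed, by Proposition~\ref{pro 5}(iii) (and Proposition~\ref{pro 5}(ii) in the corner case $k=0$, $r=D$) the matrix $M^k_{k,r}$ equals a scalar times $M^k_{k,k+1}M^{k+1}_{k+1,k+2}\cdots M^{r-1}_{r-1,r}$, whose rightmost factor $M^{r-1}_{r-1,r}=E^*_{r-1}A_1E^*_r$ kills $\xi$ by the very definition of $\mathcal{L}_r$. Next I would expand $M^s_{i,r}=\sum_k(\pm)\binom{k}{s}M^k_{i,k}M^k_{k,r}$ using Proposition~\ref{pro 8}(i) when $i\le D-1$, parts (ii),(iii) when $i=D$, and parts (vi),(vii) when $i=j=r=D$. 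In each such expansion the terms with $k<r$ vanish on $\xi$ by the previous step, the terms with $k>r$ vanish because $(k,r,k)\notin\mathcal{I}$, and the surviving $k=r$ term equals $M^r_{i,r}M^r_{r,r}\xi=M^r_{i,r}E^*_r\xi=M^r_{i,r}\xi$ by Proposition~\ref{pro2}(i). This gives $M^s_{i,r}\xi=\gamma^r_s M^r_{i,r}\xi$ with $\gamma^r_s=(-1)^{r-s}\binom{r}{s}$, the one exception being $\gamma^r_{r/2}=\tfrac12(-1)^{r/2}\binom{r}{r/2}$ in the degenerate case $s=r/2$ (only reachable when $i=D$), the extra $\tfrac12$ coming from Proposition~\ref{pro 8}(iii),(vii). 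Note that part (v) of the proposition is precisely this identity in the case $i=j=r=D$.

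For the product formula I would compute $M^t_{i,j}M^r_{j,r}$ entrywise. In the generic range $0\le i,j\le D-1$, the $(x,w)$-entry with $|x_1|=i$, $|w_1|=r$ counts the sets $y_1$ with $|y_1|=j$, $w_1\subseteq y_1$ and $|x_1\cap y_1|=t$; writing $y_1=w_1\sqcup y_1'$ and distributing $y_1'$ between $x_1\setminus w_1$ and $S\setminus(x_1\cup w_1)$, Lemma~\ref{lem 2.8}(i) evaluates this count as $\binom{i-s}{t-s}\binom{2D-i-r+s}{j-r-t+s}$ with $s=|x_1\cap w_1|$; hence $M^t_{i,j}M^r_{j,r}=\sum_s\binom{i-s}{t-s}\binom{2D-i-r+s}{j-r-t+s}M^s_{i,r}$, and combining with the reduction identity gives part (i) verbatim. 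In the folded cases $i=D$, $j=D$, and $i=j=D$, the same count must be refined according to which representative of each unordered pair attains the maximum in the definition of $\partial(\mathbf{0},\cdot,\cdot)$; this is what produces the paired inner summands of $\beta^r_{D,j,t}$, $\beta^r_{i,D,t}$, $\beta^r_{D,D,t}$, the isolated $s=r/2$ summand (where the $\tfrac12$ of $\gamma^r_{r/2}$ cancels the factor $2$ carried by the corresponding $c_{r/2}$), the ``$-\binom{k}{D}M^D_{D,D}$''-type bookkeeping term, and the overall factors $2$ and $4$ --- precisely mirroring the auxiliary product identities already established inside the proof of Proposition~\ref{pro 8}. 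This disposes of parts (ii)--(iv).

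I expect the only real difficulty to be administrative rather than conceptual: correctly tracking, in the folded cases and at the boundary distance values $t=\lfloor(j+1)/2\rfloor$ and $t=D/2$, which of $\{x_1,x_2\}$ or $\{w_1,w_2\}$ realises the relevant maximum, and matching the resulting signs, halvings and leading constants to the statement. Since this is the same casework already carried out for Propositions~\ref{pro 5} and~\ref{pro 8}, no genuinely new estimate is required; one simply has to be careful.
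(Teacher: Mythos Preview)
Your proposal is correct and follows essentially the same two-step strategy as the paper: first expand $M^t_{i,j}M^r_{j,r}$ as $\sum_l c_l M^l_{i,r}$ by an entrywise count, then reduce each $M^l_{i,r}\xi$ to a scalar multiple of $M^r_{i,r}\xi$ via Proposition~\ref{pro 8} together with the vanishing of $M^k_{k,r}\xi$ for $k<r$. The only organizational difference is that you reprove the vanishing $M^k_{k,r}\xi=0$ inline from Proposition~\ref{pro 5} and the definition of $\mathcal{L}_r$, whereas the paper simply cites Proposition~\ref{pro3}(iv), which records this fact; your bookkeeping of the folded cases and the half-factors at $s=r/2$ likewise matches the paper's (terse) treatment.
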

\begin{proof}
(i) For $0\leq i, j\leq D-1$, we first have $M_{i,j}^{t}M_{j,r}^{r}\xi=\sum^{D-1}_{l=0}{i-l\choose t-l}{2D-i-r+l\choose j-t-r+l}M_{i,r}^{l}\xi.$
Then by Propositions \ref{pro 8}(i) and \ref{pro3}(iv), we have $M_{i,r}^{l}\xi=(-1)^{r-l}{r\choose l}M_{i,r}^{r}\xi$. So
\begin{align*}
 M_{i,j}^{t}M_{j,r}^{r}\xi=\sum^{D-1}_{l=0}{i-l\choose i-t}{2D-i-r+l\choose2D-i-j+t}(-1)^{r-l}{r\choose l}M_{i,r}^{r}\xi.
\end{align*}
For cases (ii)--(iv), by the argument similar to proof of case (i) we can obtain the desired results.
(v) is immediate from Proposition \ref{pro 8}(vi), (vii). Note that $M_{D,D}^{D}\xi=\xi$.
\end{proof}
In the following, we describe a block-diagonalization of $T$ of $\square_{2D}$. We first consider the case $D$  even.
\subsubsection{Block diagonalization of $T$ of $\square_{2D}$ with even $D$}
In this subsection, we suppose $D> 3$ is even. Based on Propositions \ref{pro16}, \ref{pro3} and Corollary \ref{pro 2}, for each $r = 0,1,\ldots,D$
denote by  $B_r$ the set of an orthonormal basis  of $\mathcal{L}_r$ and let
\begin{align}\nonumber
\mathcal{B}_1=\{(r,\xi,i)|r = 0,1,\ldots,D,\ \xi\in B_r,\ &i=r,r+1,\ldots,D\ \text{for even $r$}\\
 &i=r,r+1,\ldots,D-1\ \text{for odd $r$} \}.\nonumber
\end{align}
It is not difficult to calculate
\begin{align}
&|\mathcal{B}_1|=\sum^{D-2}_{\stackrel{r=0} {r\ even}}(D-r+1)\bigg({2D\choose r}-{2D\choose r-1}\bigg)+\bigg(\frac{1}{2}{2D\choose D}-\frac{D-1}{2D}{2D\choose D-1}\bigg)\nonumber\\
&\ \ \ \ \ \ \ \ \    +\sum^{D-1}_{\stackrel{r=1} {r\ odd}}(D-r)\bigg({2D\choose r}-{2D\choose r-1}\bigg)
             =2^{2D-1}.\label{eq11}
\end{align}
For each $(r,\xi,i)\in\mathcal{B}_1$, define the vector $u_{r,\xi,i}\in V$ by
\begin{align}
u_{r,\xi,i}&:={2D-2r\choose i-r}^{-\frac{1}{2}}M^r_{i,r}\xi \ \ (r\leq i\leq D-1),\label{eq16}\\
u_{r,\xi,D}&:=\frac{\sqrt{2}}{2}{2D-2r\choose D-r}^{-\frac{1}{2}}M^r_{D,r}\xi \ \ (i=D\ \text{and}\ 0\leq r<D\ \text{ even}),\label{eq19}\\
u_{D,\xi,D}&:=\xi\ \ (i=r=D).
\end{align}
\begin{pro}\label{pro 14}
The vectors $u_{r,\xi,i}, \ (r,\xi,i)\in \mathcal{B}_1$ form an orthonormal
basis of the standard module $V$.
\end{pro}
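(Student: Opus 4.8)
The plan is to reduce the statement to orthonormality of the family $\{u_{r,\xi,i}\}_{(r,\xi,i)\in\mathcal B_1}$. By \eqref{eq11} one has $|\mathcal B_1|=2^{2D-1}=\dim V$, and by Proposition \ref{pro3}(iii) each vector $M^r_{i,r}\xi$ appearing in \eqref{eq16}--\eqref{eq19} is nonzero and lies in $E^*_i\mathcal W_r$; here one uses that the ranges of $i$ in $\mathcal B_1$ (namely $r\le i\le D$ for even $r$ and $r\le i\le D-1$ for odd $r$) are exactly $r\le i\le r+d^*$, with $d^*=D-r$ resp.\ $d^*=D-1-r$ by Proposition \ref{pro16}. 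Hence every $u_{r,\xi,i}$ is nonzero, and once orthonormality is established the family is automatically linearly independent, so by the dimension count it is an orthonormal basis of $V$. Thus it suffices to compute $\langle u_{r,\xi,i},u_{r',\xi',i'}\rangle$ and check it equals $\delta_{(r,\xi,i),(r',\xi',i')}$.

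For the off-diagonal case I would first show that the subspaces $E^*_i\mathcal W_r$ are pairwise orthogonal as $(r,i)$ varies. The decomposition $V=\sum_{r=0}^D\mathcal W_r$ is an orthogonal direct sum (as already used in the proof of Proposition \ref{pro3}(i)), because by Proposition \ref{pro16} the isomorphism class of an irreducible $T$-module with endpoint $r$ is determined by $r$, so the $\mathcal W_r$ are the isotypic components of the semisimple $\ast$-algebra $T$ acting on $V$. Therefore, when $r\ne r'$ one has $E^*_i\mathcal W_r\subseteq\mathcal W_r\perp\mathcal W_{r'}\supseteq E^*_{i'}\mathcal W_{r'}$, and when $r=r'$ but $i\ne i'$ one has $E^*_i\mathcal W_r\subseteq E^*_iV\perp E^*_{i'}V\supseteq E^*_{i'}\mathcal W_r$ (the $E^*_k$ being orthogonal projections summing to the identity). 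Either way $u_{r,\xi,i}\perp u_{r',\xi',i'}$, so the only remaining case is $r=r'$, $i=i'$.

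For that case, and for the normalization, I would fix $r\le i$ and use that $M^r_{i,r}$ is a real $0$-$1$ matrix with transpose $M^r_{r,i}$, that $M^r_{r,r}=E^*_r$ (Proposition \ref{pro2}(i)), and that $E^*_r\xi'=\xi'$ for $\xi'\in\mathcal L_r\subseteq E^*_rV$. Applying Proposition \ref{pro 9}(i) with its index triple $(i,j,t)$ specialized to $(r,i,r)$ when $r\le i\le D-1$, and Proposition \ref{pro 9}(iii) with $(i,j,t)$ specialized to $(r,D,r)$ when $i=D$ and $r<D$ is even, gives $M^r_{r,i}M^r_{i,r}\xi'=\beta^r_{r,i,r}\xi'$ and $M^r_{r,D}M^r_{D,r}\xi'=\beta^r_{r,D,r}\xi'$ respectively. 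Hence $\langle M^r_{i,r}\xi,M^r_{i,r}\xi'\rangle=\beta^r_{r,i,r}\langle\xi,\xi'\rangle$ (with $\beta^r_{r,D,r}$ in place of $\beta^r_{r,i,r}$ when $i=D$), which vanishes for $\xi\ne\xi'$ because $B_r$ is orthonormal. For $\xi=\xi'$ the value of the constant must be computed: rewriting the second binomial factor in the definition of $\beta^r_{i,j,t}$ via ${a\choose b}={a\choose a-b}$ turns the defining sum into the alternating sum of Lemma \ref{lem 2.8}(iii), which yields $\beta^r_{r,i,r}={2D-2r\choose i-r}$ for $r\le i\le D-1$; when $i=D$ the ``cross'' terms in Proposition \ref{pro 9}(iii) survive only at $l=0$ (since $r$ is even, so $(-1)^r=1$), contributing one further copy of ${2D-2r\choose D-r}$ and giving $\beta^r_{r,D,r}=2{2D-2r\choose D-r}$. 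These constants are exactly what the normalizing factors ${2D-2r\choose i-r}^{-1/2}$ in \eqref{eq16} and $\frac{\sqrt{2}}{2}{2D-2r\choose D-r}^{-1/2}$ in \eqref{eq19} are designed to cancel, so $\|u_{r,\xi,i}\|=1$; the leftover vector $u_{D,\xi,D}=\xi$ (for $D$ even) has norm $1$ by the choice of $B_D$. Combining the three cases yields orthonormality, which together with the first paragraph completes the proof.

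The main obstacle I anticipate is bookkeeping rather than ideas: keeping the normalization constants in \eqref{eq16}--\eqref{eq19} consistent across the three regimes, and in particular tracking the factor $2$ (equivalently $\sqrt{2}/2$) at the extreme layer $i=D$, which traces back to the two representatives $(x_1,x_2),(x_2,x_1)$ of a vertex with $|x_1|=|x_2|=D$ and is already visible in the $2{2D-2r\choose D-r}$ prefactors of Proposition \ref{pro 9}(ii),(iv). I would also double-check that the parity and range conditions built into $\mathcal B_1$ match exactly the pairs $(r,i)$ for which $M^r_{i,r}\xi\ne0$; this is precisely what makes the count \eqref{eq11} equal $\dim V$ and hence lets orthonormality alone finish the argument.
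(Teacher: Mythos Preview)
Your proof is correct and follows essentially the same route as the paper: the same dimension count \eqref{eq11}, the same orthogonality argument via $E^*_i\mathcal W_r$ (Proposition \ref{pro3}(i),(iii)), and the same norm computation reducing to the alternating binomial sum of Lemma \ref{lem 2.8}(iii). The only cosmetic difference is that for the norm you invoke Proposition \ref{pro 9} (which already packages the product $M^r_{r,i}M^r_{i,r}\xi$), whereas the paper redoes that product computation directly from Propositions \ref{pro 8}(i) and \ref{pro3}(iv); both routes land on the identical constants ${2D-2r\choose i-r}$ and $2{2D-2r\choose D-r}$.
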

\begin{proof}
For $r\leq i\leq D-1$,
\begin{align}
\xi^{\rm T}M^r_{r,i}M^r_{i,r}\xi&=\sum^r_{l=0}{2D-2r+l\choose i-2r+l}\xi^{\rm T}M^l_{r,r}\xi\ \ \ \ \ \nonumber\\
&=\sum^r_{l=0}{2D-2r+l\choose 2D-i}(-1)^{r-l}{r\choose l}\xi^{\rm T}\xi \nonumber\ \ (\text{by\ Propositions \ref{pro 8}(i) and \ref{pro3}(iv)})\\
&={2D-2r\choose i-r}\xi^{\rm T}\xi;\ \ \ \ \ \ \ \ \  \text{(by Lemma \ref{lem 2.8}(iii))}\nonumber\\
\ \ \ \ \ \ \ \ \ \ \   \text{For $i=D$},\nonumber\\
\xi^{\rm T}M^r_{r,D}M^r_{D,r}\xi&=\sum^r_{l=0}{2D-2r+l\choose D-2r+l}\xi^{\rm T}M^l_{r,r}\xi+{2D-2r\choose D-r}\xi^{\rm T}M^0_{r,r}\xi\ \ \ \ \ \nonumber\\
&=2{2D-2r\choose D-r}\xi^{\rm T}\xi.\ \ \ \ \ \ \ \   \text{(by Propositions \ref{pro 8}(i), \ref{pro3}(iv), Lemma \ref{lem 2.8}(iii))} \nonumber
\end{align}
It follows that $u_{r,\xi,i}, \ (r,\xi,i)\in \mathcal{B}_1$  are normal.
Next, we show that $u_{r,\xi,i}$  are pairwise orthogonal. By Proposition \ref{pro3}(i),(iii), the vectors $u_{r,\xi,i}$ and $u_{r',\xi',i'}$  are orthogonal if $r\neq r'$ or $i\neq i'$. One can easily verifies that $u_{r,\xi,i}$ and $u_{r',\xi',i'}$  are also orthogonal if $r=r',i=i'$, $\xi \neq \xi'$ by the argument similar to the
proof of normality since  $\xi^{\rm T}\xi'=0$.
\end{proof}
Let $U_1$ be the $X\times \mathcal{B}_1$ matrix  with $u_{r,\xi,i}$ as the $(r,\xi,i)$-th column.
For each triple $(i,j,t)\in \mathcal{I}$, define the matrix
$\widetilde{M}_{i,j}^{t}:=U_1^{\rm T}M_{i,j}^{t}U_1$. The following proposition shows that $\widetilde{M}_{i,j}^{t}$ is in block diagonal form.
\begin{pro}\label{pro 6}
For   $(i,j,t)\in \mathcal{I}$ and $(r,\xi,i'),(r',\xi',j')\in \mathcal{B}_1$, the following {\rm(i)--(iv)} hold.
\begin{itemize}
\item[\rm (i)] For $0\leq i,j\leq D-1$,
\begin{align*}
(\widetilde{M}_{i,j}^{t})_{(r,\xi,i'),(r',\xi',j')}=\left\{\begin{array}{ll}
{2D-2r\choose i-r}^{-\frac{1}{2}}
                     {2D-2r\choose j-r}^{-\frac{1}{2}}
                     \beta_{i,j,t}^{r}
\  &if\  r=r',\xi=\xi', i=i',j=j',\\
0\   &otherwise.
\end{array}
\right.
\end{align*}
\item[\rm (ii)] For $i=D, 0\leq j\leq D-1$,
\begin{align*}
(\widetilde{M}_{D,j}^{t})_{(r,\xi,i'),(r',\xi',j')}=\left\{\begin{array}{ll}
\frac{\sqrt{2}}{2}{2D-2r\choose D-r}^{-\frac{1}{2}}
                     {2D-2r\choose j-r}^{-\frac{1}{2}}
                     \beta_{D,j,t}^{r}
\  &if\  r=r',\xi=\xi', i'=D,j=j',\\
0\   &otherwise.
\end{array}
\right.
\end{align*}
\item[\rm (iii)] For $0\leq i\leq D-1, j=D$,
\begin{align*}
(\widetilde{M}_{i,D}^{t})_{(r,\xi,i'),(r',\xi',j')}=\left\{\begin{array}{ll}
\frac{\sqrt{2}}{2}{2D-2r\choose D-r}^{-\frac{1}{2}}
                     {2D-2r\choose i-r}^{-\frac{1}{2}}
                     \beta_{i,D,t}^{r}
\  &if\  r=r',\xi=\xi', i=i',j'=D,\\
0\   &otherwise.
\end{array}
\right.
\end{align*}
\item[\rm (iv)] For $i=j=D$ and $0\leq r\leq D-1$,
\begin{align*}
(\widetilde{M}_{D,D}^{t})_{(r,\xi,i'),(r',\xi',j')}=\left\{\begin{array}{ll}
\frac{1}{2}{2D-2r\choose D-r}^{-1}\beta_{D,D,t}^{r}
\  &if\  r=r',\xi=\xi', i'=j'=D,\\
0\   &otherwise.
\end{array}
\right.
\end{align*}
\item[\rm (v)] For $i=j=D$ and $r=D$,
\begin{align*}
(\widetilde{M}_{D,D}^{t})_{(r,\xi,i'),(r',\xi',j')}=\left\{\begin{array}{ll}
\beta_{D,D,t}^{D}
\  &if\  r=r'=D,\xi=\xi', i'=j'=D,\\
0\   &otherwise.
\end{array}
\right.
\end{align*}
\end{itemize}
Note that the numbers  $\beta_{i,j,t}^{r}$ are from Proposition {\rm \ref{pro 9}} and $r$ is even in  {\rm (ii)--(v)}.
\end{pro}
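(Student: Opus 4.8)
The plan is to compute every entry of $\widetilde{M}_{i,j}^{t}=U_1^{\rm T}M_{i,j}^{t}U_1$ directly as the inner product $(\widetilde{M}_{i,j}^{t})_{(r,\xi,i'),(r',\xi',j')}=u_{r,\xi,i'}^{\rm T}M_{i,j}^{t}u_{r',\xi',j'}$, pushing $M_{i,j}^{t}$ onto the right-hand basis vector via Proposition \ref{pro 9} and then reading off the result from the orthonormality of the $u_{r,\xi,i}$ proved in Proposition \ref{pro 14}.

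The first step is two elementary support observations. By Definition \ref{def1}, $M_{i,j}^{t}$ has a nonzero $(x,y)$-entry only when $|y_1|=j$ (or $|y_1|\in\{j,2D-j\}$ in case $j=D$), while $u_{r',\xi',j'}$ is a scalar multiple of $M^{r'}_{j',r'}\xi'$, which is supported on vertices with $|y_1|=j'$. Hence $M_{i,j}^{t}u_{r',\xi',j'}=0$ unless $j'=j$; dually $M_{i,j}^{t}u_{r',\xi',j}$ is supported on vertices with $|x_1|=i$, so its inner product with $u_{r,\xi,i'}$ vanishes unless $i'=i$. This settles every ``otherwise'' case in which $i\neq i'$ or $j\neq j'$.

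The second step handles $i'=i$, $j'=j$. Write $u_{r,\xi,i}=\gamma_{r,i}^{-1}M^{r}_{i,r}\xi$, where by \eqref{eq16}--\eqref{eq19} one has $\gamma_{r,i}^{2}=\binom{2D-2r}{i-r}$ for $r\le i\le D-1$, $\gamma_{r,D}^{2}=2\binom{2D-2r}{D-r}$ for even $r<D$, and $\gamma_{D,D}=1$. The crucial point is that in each part (i)--(v) of Proposition \ref{pro 9} the binomial prefactor multiplying $M_{i,j}^{t}M^{r}_{j,r}\xi$ is exactly $\gamma_{r,i}^{2}$, so uniformly $M_{i,j}^{t}M^{r}_{j,r}\xi=\gamma_{r,i}^{-2}\beta^{r}_{i,j,t}M^{r}_{i,r}\xi$ (using $M^{D}_{D,D}\xi=\xi$ in the case $r=i=j=D$ of part (v)). Therefore $M_{i,j}^{t}u_{r',\xi',j}=\gamma_{r',i}^{-1}\gamma_{r',j}^{-1}\beta^{r'}_{i,j,t}\,u_{r',\xi',i}$, and taking the inner product with $u_{r,\xi,i}$ and invoking orthonormality gives $(\widetilde{M}_{i,j}^{t})_{(r,\xi,i),(r',\xi',j)}=\gamma_{r,i}^{-1}\gamma_{r,j}^{-1}\beta^{r}_{i,j,t}$ when $(r,\xi)=(r',\xi')$ and $0$ otherwise. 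Unpacking $\gamma_{r,i}$ in the four regimes ($i,j\le D-1$; $i=D$; $j=D$; $i=j=D$ with $r<D$ or $r=D$) then reproduces the coefficients stated in (i)--(v) respectively: the $\sqrt2/2$ factors in (ii)--(iv) are precisely the $\gamma_{r,D}^{-1}$'s, and in (v) both $\gamma$'s equal $1$.

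I do not expect a real obstacle here; the only point deserving care is that the basis vectors $u_{r,\xi,D}$ exist only for even $r$, since for odd $r$ the irreducible $T$-module of endpoint $r$ has diameter $D-1-r$ and hence $E^*_D$ annihilates it, i.e.\ $M^{r}_{D,r}\xi=0$ by Proposition \ref{pro3}(iii) together with Lemma \ref{lem1}(ii); this is why $r$ is even throughout (ii)--(v). The remaining work is the bookkeeping that matches the binomial prefactors of Proposition \ref{pro 9}(i)--(v) against the normalizations $\gamma_{r,i}$, which is routine.
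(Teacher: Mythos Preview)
Your proposal is correct and follows essentially the same route as the paper's own proof: write the entry as $u_{r,\xi,i'}^{\rm T}M_{i,j}^{t}u_{r',\xi',j'}$, push $M_{i,j}^{t}$ onto $u_{r',\xi',j'}$ via Proposition~\ref{pro 9}, and conclude by the orthonormality of Proposition~\ref{pro 14}. The paper carries out case~(i) explicitly and declares (ii)--(v) similar; your introduction of the uniform normalization $\gamma_{r,i}$ (with $\gamma_{r,i}^{2}$ matching the binomial prefactors in Proposition~\ref{pro 9}) is a tidy way to do all five cases at once, but it is an organizational convenience rather than a different argument.
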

\begin{proof}(i) For $0\leq i,j\leq D-1$, it is clear that
$(\widetilde{M}_{i,j}^{t})_{(r,\xi,i'),(r',\xi',j')}=u_{r,\xi,i'}^{\rm T}M_{i,j}^{t}u_{r',\xi',j'}$.
By \eqref{eq16}, we have
\begin{align}
M_{i,j}^{t}u_{r',\xi',j'}&={2D-2r'\choose j'-r'}^{-\frac{1}{2}}M_{i,j}^{t}M_{j',r'}^{r'}\xi' \nonumber\\
                     &=\delta_{j,j'}{2D-2r'\choose j-r'}^{-\frac{1}{2}}{2D-2r'\choose i-r'}^{-1}
                     \beta^{r'}_{i,j,t}M_{i,r'}^{r'}\xi'\ \ \ \ \ \ \ ({\rm \text by\ Proposition
                     \ \ref{pro 9}(i)})\nonumber\\
                     &=\delta_{j,j'}{2D-2r'\choose j-r'}^{-\frac{1}{2}}
                     {2D-2r'\choose i-r'}^{-\frac{1}{2}}
                     \beta_{i,j,t}^{r'}u_{r',\xi',i},\nonumber
\end{align}
from which (i) follows.

The proofs of (ii)--(v) are similar to that of (i).
\end{proof}
 Proposition \ref{pro 6} implies that each matrix $\widetilde{M}_{i,j}^{t}$, $(i,j,t)\in \mathcal{I}$ has a block diagonal form:
  for each  even $0\leq r\leq D-1$ there are ${2D\choose r}-{2D\choose r-1}$ copies of
 a $(D+1-r)\times(D+1-r)$ block on the diagonal; for each odd $0\leq r\leq D-1$ there are ${2D\choose r}-{2D\choose r-1}$ copies of
 a  $(D-r)\times(D-r)$  block on the diagonal; for  $r=D$ there are $\frac{1}{2}{2D\choose D}-\frac{D-1}{2D}{2D\choose D-1}$ copies of
 a $1\times 1$  block on the diagonal. For each $r$ the copies are indexed by the elements of $B_r$, and in each copy
the rows and columns are indexed by the integers $i\in \{r,r+1,\ldots, D\}$ ($r$  even) or $i\in \{r,r+1,\ldots, D-1\}$ ($r$  odd). Thus
 by deleting copies of blocks and using
 the identity $\sum^{D}_{\stackrel{r=0} {r\ even}}(D-r+1)^2+\sum^{D-1}_{\stackrel{r=1} {r\ odd}}(D-r)^2=\frac{(D+1)(D^2+2D+3)}{3}$, we have the following theorem.
\begin{thm}\label{thm 2}
For  $\square_{2D}$ with even $D>3$, the above matrix $U_1$ gives a block-diagonalization of $T$ and $T$ is isomorphic to
$\bigoplus^D_{r=0}{\mathbb{C}}^{N_r\times N_r}$, where $N_r:=\{r,r+1,\ldots, D\}$ $(r\ even)$ or  $N_r:=\{r,r+1,\ldots, D-1\}$ $(r\ odd)$.
\end{thm}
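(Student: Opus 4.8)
The plan is to derive the theorem from three facts already established: $U_1$ is orthogonal (Proposition \ref{pro 14}), each $\widetilde{M}_{i,j}^{t}$ has the explicit block form of Proposition \ref{pro 6}, and $\dim T=|\mathcal{I}|$ is known (Theorem \ref{thm2}, Proposition \ref{pro 3}). Concretely: since the vectors $u_{r,\xi,i}$, $(r,\xi,i)\in\mathcal{B}_1$, form an orthonormal basis of $V$, the matrix $U_1$ is real orthogonal, so conjugation $\Phi\colon Y\mapsto U_1^{\mathrm T}YU_1$ is a $*$-algebra isomorphism of $\mathrm{Mat}_X(\mathbb{C})$ onto $\mathrm{Mat}_{\mathcal{B}_1}(\mathbb{C})$. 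By Theorem \ref{thm2}, $T=\mathcal{A}$ has basis $\{M_{i,j}^{t}\colon (i,j,t)\in\mathcal{I}\}$, so $\widetilde{T}:=\Phi(T)$ has basis $\{\widetilde{M}_{i,j}^{t}\colon (i,j,t)\in\mathcal{I}\}$ and in particular $\dim\widetilde{T}=\dim T=|\mathcal{I}|$.

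Next I would read off the block pattern. Indexing rows and columns of $\mathrm{Mat}_{\mathcal{B}_1}(\mathbb{C})$ by the triples $(r,\xi,i)$, parts (i)--(v) of Proposition \ref{pro 6} say that every entry $(\widetilde{M}_{i,j}^{t})_{(r,\xi,i'),(r',\xi',j')}$ vanishes unless $r=r'$ and $\xi=\xi'$, and that its value then depends only on $r$, on $i'$ and $j'$, and on $(i,j,t)$, never on the particular orthonormal vector $\xi\in B_r$. Hence each $\widetilde{M}_{i,j}^{t}$ is block-diagonal, with, for each $r\in\{0,1,\dots,D\}$, exactly $|B_r|=\dim\mathcal{L}_r$ mutually equal blocks, each of size $N_r\times N_r$ and having rows and columns indexed by $i'\in N_r$ (where $N_r=\{r,\dots,D\}$ for $r$ even and $N_r=\{r,\dots,D-1\}$ for $r$ odd, in agreement with the definition of $\mathcal{B}_1$); the numbers $\dim\mathcal{L}_r$ are given by Corollary \ref{pro 2}. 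By linearity this block pattern holds for every element of $\widetilde{T}$.

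It follows that the map $\pi\colon\widetilde{T}\to\bigoplus_{r=0}^{D}\mathbb{C}^{N_r\times N_r}$ sending $Y$ to the tuple of its distinguished $N_r\times N_r$ diagonal blocks is a well-defined homomorphism of $*$-algebras, and it is injective: if $\pi(Y)=0$ then all blocks of the block-diagonal matrix $Y$ vanish, so $Y=0$. Now compare dimensions. By Proposition \ref{pro 3}, $\dim T=|\mathcal{I}|=\frac{(D+1)(D^2+2D+3)}{3}$, while $\dim\bigoplus_{r=0}^{D}\mathbb{C}^{N_r\times N_r}=\sum_{r\ \mathrm{even}}(D-r+1)^2+\sum_{r\ \mathrm{odd}}(D-r)^2$, which also equals $\frac{(D+1)(D^2+2D+3)}{3}$ by the identity recorded just before the statement. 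An injective linear map between complex vector spaces of equal finite dimension is bijective, and $\pi$ is an algebra homomorphism, so $T\cong\widetilde{T}\cong\bigoplus_{r=0}^{D}\mathbb{C}^{N_r\times N_r}$. Combined with the block pattern above, this exhibits $U_1$ as the asserted block-diagonalization, with $\dim\mathcal{L}_r$ copies of the $N_r\times N_r$ block for each $r$ (semisimplicity of $T$, noted in Section~2, is consistent with this but not needed).

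The substantive work — orthogonality of $U_1$ and the fact that conjugation genuinely produces repeated, $\xi$-independent blocks — is already carried out in Propositions \ref{pro 14}, \ref{pro 9} and \ref{pro 6}. For the theorem itself the only delicate point is the bookkeeping across the parity cases: verifying that block sizes and multiplicities match $N_r$ and $\dim\mathcal{L}_r$ uniformly, including the small cases $r=D$ and $r=D-1$, and checking the numeric identity $\sum_{r\ \mathrm{even}}(D-r+1)^2+\sum_{r\ \mathrm{odd}}(D-r)^2=|\mathcal{I}|$; once these are in place, surjectivity of $\pi$, and hence the full isomorphism, is automatic from the dimension match.
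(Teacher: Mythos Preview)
Your proposal is correct and follows essentially the same approach as the paper: read off the block-diagonal form from Proposition~\ref{pro 6}, note that the blocks indexed by different $\xi\in B_r$ are identical copies (with multiplicities given by Corollary~\ref{pro 2}), and then match $\dim T=|\mathcal{I}|$ against $\sum_{r\ \mathrm{even}}(D-r+1)^2+\sum_{r\ \mathrm{odd}}(D-r)^2$ via the identity recorded just before the theorem. Your version makes the injective $*$-homomorphism $\pi$ and the dimension-count argument for surjectivity more explicit than the paper does, but the logical structure is the same.
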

\subsubsection{Block diagonalization of $T$ of $\square_{2D}$ with odd $D$}
In this subsection, we suppose $D\geq 3$ is odd.
Based on Propositions \ref{pro16}, \ref{pro3} and Corollary \ref{pro 2}, for each $r = 0,1,\ldots,D-1$,
denote by  $B_r$ the set of an orthonormal basis  of $\mathcal{L}_r$ and let
\begin{align}\nonumber
\mathcal{B}_2=\{(r,\xi,i)|r = 0,1,\ldots,D-1,\ \xi\in B_r,\ &i=r,r+1,\ldots,D\ \text{for even $r$}\\
 &i=r,r+1,\ldots,D-1\ \text{for odd $r$} \}.\nonumber
\end{align}
It is not difficult to calculate
\begin{align}
|\mathcal{B}_2|&=\sum^{D-1}_{\stackrel{r=0} {r\ even}}(D-r+1)\bigg({2D\choose r}-{2D\choose r-1}\bigg)+\sum^{D-2}_{\stackrel{r=1} {r\ odd}}(D-r)\bigg({2D\choose r}-{2D\choose r-1}\bigg)\nonumber\\
             &=2^{2D-1}.
\end{align}

For each $(r,\xi,i)\in\mathcal{B}_2$, define the vector $u_{r,\xi,i}\in V$  by the forms of \eqref{eq16} and \eqref{eq19}. One can easily verifies that the vectors $u_{r,\xi,i}, \ (r,\xi,i)\in \mathcal{B}$ form an orthonormal
basis of the standard module $V$. Let $U_2$ be the $X\times \mathcal{B}_2$ matrix  with $u_{r,\xi,i}$ as the $(r,\xi,i)$-th column.  It follows from
Proposition  \ref{pro 6}(i)--(iv) that for each triple $(i,j,t)\in \mathcal{I}$ the matrix
$\widetilde{M}_{i,j}^{t}:=U_2^{\rm T}M_{i,j}^{t}U_2$ is in block diagonal form:
  for each even $0\leq r\leq D-1$ there are ${2D\choose r}-{2D\choose r-1}$ copies of
 a $(D+1-r)\times(D+1-r)$  block on the diagonal; for each odd $0\leq r\leq D-1$ there are ${2D\choose r}-{2D\choose r-1}$ copies of a $(D-r)\times(D-r)$  block on the diagonal.
 By deleting copies of blocks and using
 the identity $\sum^{D-1}_{\stackrel{r=0} {r\ even}}(D-r+1)^2+\sum^{D-2}_{\stackrel{r=1} {r\ odd}}(D-r)^2=\frac{(D+1)(D^2+2D+3)}{3}$, we have the following theorem.
\begin{thm}\label{thm3}
For  $\square_{2D}$ with odd $D\geq 3$, the above matrix $U_2$ gives a block diagonalization of $T$ and $T$ is isomorphic to
$\bigoplus^{D-1}_{r=0}{\mathbb{C}}^{N_r\times N_r}$, where $N_r:=\{r,r+1,\ldots, D\}$ $(r\ even)$ or  $N_r:=\{r,r+1,\ldots, D-1\}$ $(r\ odd)$.
\end{thm}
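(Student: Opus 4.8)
The plan is to follow the template of Theorem \ref{thm 2}, modifying only the bookkeeping forced by the parity of $D$. The essential inputs are Propositions \ref{pro16}, \ref{pro3}, and Corollary \ref{pro 2}, which for odd $D$ tell us there is \emph{no} irreducible $T$-module with endpoint $D$: every irreducible $T$-module $W$ is thin, is determined up to isomorphism by its endpoint $r$, and satisfies $r+d^* = D$ when $r$ is even, $r+d^*=D-1$ when $r$ is odd. Consequently the endpoints range only over $r=0,1,\ldots,D-1$, and $\mathcal{L}_D=0$ by Corollary \ref{pro 2}(ii). This is exactly why $\mathcal{B}_2$ is indexed over $r\le D-1$ rather than $r\le D$.

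First I would verify that the vectors $u_{r,\xi,i}$, $(r,\xi,i)\in\mathcal{B}_2$, defined by the same formulas \eqref{eq16} and \eqref{eq19} used in Subsection 4.1.1, form an orthonormal basis of $V$. Normality is the norm computation in the proof of Proposition \ref{pro 14} (using Proposition \ref{pro 8}(i), Proposition \ref{pro3}(iv), and Lemma \ref{lem 2.8}(iii)); orthogonality across distinct $(r,\xi,i)$ follows from Proposition \ref{pro3}(i),(iii) when $r\ne r'$ or $i\ne i'$, and from $\xi^{\rm T}\xi'=0$ when only $\xi\ne\xi'$. That $\{u_{r,\xi,i}\}$ spans $V$ follows by a dimension count: $|\mathcal{B}_2|=2^{2D-1}=|X|=\dim V$, which is the displayed identity preceding the statement. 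Next I would invoke Proposition \ref{pro 6}(i)--(iv) verbatim: since those block-form identities for $\widetilde{M}_{i,j}^{t}=U_2^{\rm T}M_{i,j}^{t}U_2$ were proved for arbitrary $(r,\xi,i'),(r',\xi',j')\in\mathcal{B}_1$ using only Proposition \ref{pro 9}(i)--(iv), they hold equally for indices in $\mathcal{B}_2$; only part (v) of Proposition \ref{pro 6} is excluded, and that part is vacuous here because there is no $r=D$ block. This shows each $\widetilde{M}_{i,j}^{t}$ is block diagonal, with ${2D\choose r}-{2D\choose r-1}$ identical copies of a $(D+1-r)\times(D+1-r)$ block for each even $r\le D-1$, and the same number of copies of a $(D-r)\times(D-r)$ block for each odd $r\le D-1$; the copies are indexed by $B_r$ and the rows/columns of a copy by $i\in N_r$.

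Finally, since the matrices $M^t_{i,j}$ span $T=\mathcal{A}'$ (Theorem \ref{thm2} in the even-$n$ case, here the analogue $\mathcal{A}=T$ established in Section 3), the conjugation $Y\mapsto U_2^{\rm T} Y U_2$ carries $T$ isomorphically onto the span of the $\widetilde{M}_{i,j}^{t}$; deleting redundant identical copies of each block identifies $T$ with $\bigoplus_{r=0}^{D-1}\mathbb{C}^{N_r\times N_r}$. To confirm no dimension is lost I would check the count $\sum_{\substack{r=0\\ r\text{ even}}}^{D-1}(D-r+1)^2+\sum_{\substack{r=1\\ r\text{ odd}}}^{D-2}(D-r)^2=\frac{(D+1)(D^2+2D+3)}{3}=|\mathcal{I}|$, which matches $\dim T$ by Proposition \ref{pro 3}, so the map is onto $\bigoplus_r\mathbb{C}^{N_r\times N_r}$ with the right dimension and hence an algebra isomorphism. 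The one point requiring genuine care—though it is already handled by the cited structural results—is the claim that $\mathcal{W}_D=0$, i.e.\ that odd $D$ truly kills the endpoint-$D$ module; everything downstream is then a parity-adjusted rerun of the even case, so I expect no essential obstacle beyond correctly transcribing the index sets and the two summation identities.
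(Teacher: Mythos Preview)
Your proposal is correct and follows essentially the same approach as the paper: the paper's proof is precisely the parity-adjusted rerun of the even-$D$ case that you describe, citing Propositions \ref{pro16}, \ref{pro3}, Corollary \ref{pro 2} for $\mathcal{L}_D=0$, the orthonormality argument of Proposition \ref{pro 14}, Proposition \ref{pro 6}(i)--(iv) for the block structure, and the same dimension identity $\sum_{r\ \mathrm{even}}^{D-1}(D-r+1)^2+\sum_{r\ \mathrm{odd}}^{D-2}(D-r)^2=|\mathcal{I}|$. One cosmetic slip: you wrote $T=\mathcal{A}'$ where you meant $\mathcal{A}$ (the prime is the paper's notation for the odd-$n$ algebra, whereas here $n=2D$ is even and Theorem \ref{thm2} is indeed the relevant result).
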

\subsection{Block diagonalization of $T$ of $\square_{2D+1}$}
\begin{pro}\label{pro21}
For $\square_{2D+1}$ with $D\geq 2$, let $W$ denote  an irreducible $T$-module
with endpoint $r$ and diameter $d^*\ (0\leq r,d^*\leq D)$. Then $W$ is thin,
$r+d^*=D$  and the isomorphism class of $W$ is determined only by $r$.
\end{pro}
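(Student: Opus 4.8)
The plan is to mirror the proof of Proposition~\ref{pro16} (the bipartite case $\square_{2D}$), replacing ``bipartite'' by ``almost-bipartite'' throughout. Recall from \cite{bcn} that $\square_{2D+1}$ is a distance-regular graph of diameter $D$ that is $Q$-polynomial with respect to its natural ordering and is almost-bipartite, in the sense that $a_i=0$ for $0\le i\le D-1$ while $a_D\ne 0$; recall also that its antipodal $2$-cover is the hypercube $H(2D+1,2)$, whose Terwilliger algebra (with respect to a fixed vertex) is completely understood and all of whose irreducible modules are thin.

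The first and main step is to show that every irreducible $T$-module $W$ is thin and that its ``local intersection numbers'' (the scalars describing the actions of $A_1$ and the $E^*_i$ on a basis adapted to $W$) are explicit functions of its endpoint $r$ and of $D$. One route is to invoke the structure theory of the Terwilliger algebra of an almost-bipartite $P$- and $Q$-polynomial association scheme, i.e.\ the almost-bipartite counterpart of the bipartite results in \cite{cau} together with the general theory in \cite{ter3}; note that in \cite{ter3} our ``endpoint'' is the ``dual endpoint''. Alternatively one can argue directly via the covering map $H(2D+1,2)\to\square_{2D+1}$: identify $V=\mathbb{C}^{X}$ with the subspace of $\mathbb{C}^{H(2D+1,2)}$ fixed by the antipodal involution $\sigma\colon v\mapsto S\setminus v$, and check that under this identification $A_1(\square_{2D+1})$ is the restriction of $A_1(H(2D+1,2))$ while $E^*_i(\square_{2D+1})$ is the restriction of $E^*_i+E^*_{2D+1-i}$ (formed in $H(2D+1,2)$ at the vertex $\emptyset$). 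Since $\sigma$ normalizes the Terwilliger algebra of $H(2D+1,2)$ and preserves the endpoint of each irreducible module, the endpoint-$\rho$ isotypic component of $\mathbb{C}^{H(2D+1,2)}$ is $\sigma$-stable, so $V$ splits accordingly, and a dimension count (that isotypic component has $\sigma$-fixed part of dimension $m_\rho(D+1-\rho)$, with each $E^*_j(\square_{2D+1})$ of rank $m_\rho$ on it for $\rho\le j\le D$, these idempotents being orthogonal and summing to the identity there) shows that the resulting $T(\square_{2D+1})$-module is a sum of thin irreducible modules of endpoint $\rho$ and diameter $D-\rho$; carrying out this bookkeeping carefully is the substance of this route.

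Granting thinness, the remaining assertions follow. By Lemma~\ref{lem1}(ii), an irreducible $T$-module $W$ with endpoint $r$ has $E^*_iW\ne0$ exactly for $r\le i\le r+d^*$, so $r+d^*\le D$; for the reverse inequality one argues that $W$ must reach the top subconstituent, since otherwise (as $a_i=0$ for $i<D$) $W$ would sit inside a bipartite configuration and $Q$-polynomiality together with irreducibility would force its length to obey the bipartite balance relation, which is incompatible with the intersection numbers of $\square_{2D+1}$; hence $r+d^*=D$. Finally, by Lemma~\ref{lem2}, $W$ is spanned by the chain $\xi_0,\dots,\xi_{d^*}$, and the scalars governing the actions of $A_1$ and the $E^*_i$ on this chain are exactly the local intersection numbers from the first step, which depend only on $r$ and on $D=r+d^*$; hence the isomorphism class of $W$ is determined by $r$ alone.

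The main obstacle is the thinness claim: once it is secured, the balance relation $r+d^*=D$ and the determination of the isomorphism class are routine local-eigenvalue bookkeeping, the only delicate point being the behaviour at the top subconstituent $E^*_D$, where $a_D\ne0$ makes $\square_{2D+1}$ genuinely different from the bipartite case $\square_{2D}$.
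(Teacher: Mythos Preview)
Your first route---invoking the structure theory for almost-bipartite $P$- and $Q$-polynomial schemes together with \cite{ter3}---is exactly what the paper does; the paper simply cites \cite{ca} (the Caughman--MacLean--Terwilliger almost-bipartite paper, not the bipartite \cite{cau}) for thinness, $r+d^*=D$, and the fact that the isomorphism class is determined by the dual endpoint and $d^*$, and then pins down the last step by identifying $\square_{2D+1}$ with $\tfrac{1}{2}H(2D+1,2)'''$ via \cite{ban,ter3}, from which \cite[p.~204]{ter3} gives that the dual endpoint and $d^*$ are both functions of $r$. Your sketch of that last step (``local intersection numbers depend only on $r$ and $D$'') is less sharp than the paper's explicit identification, and your ad hoc argument for $r+d^*=D$ is unnecessary once you cite \cite{ca}, which already contains it.

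Your second route, pulling everything back through the antipodal double cover $H(2D+1,2)\to\square_{2D+1}$ and reading off the $T(\square_{2D+1})$-module structure from the $\sigma$-fixed parts of the hypercube isotypic components, is a genuinely different and more self-contained argument that the paper does not pursue. It has the advantage of being explicit (one can in principle write down bases and multiplicities directly, bypassing the general almost-bipartite machinery), at the cost of the bookkeeping you allude to; the paper's approach trades that bookkeeping for reliance on the cited structural results.
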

\begin{proof}
From \cite{ca} we known that $W$ is thin, $r+d^*=D$ and the isomorphism class of $W$ is determined by its dual endpoint and $d^*$. By \cite[pp. 305-306]{ban} and \cite[p. 196]{ter3} we have that  $\square_{2D+1}$ is isomorphic to $\frac{1}{2}H(2D+1,2){'''}$. Then it follows from \cite[p. 204]{ter3} that
both $W$'s dual endpoint and $d^*$ can be determined  by $r$.
\end{proof}
Based on Definition \ref{def2} and Proposition \ref{pro21}, for  $r = 0,1,\ldots,D$, define the linear vector space $\mathcal{L}'_r$ as follows.
\begin{align}
\mathcal{L}'_r:=\{\xi\in V|M^{r-1}_{r-1,r}\xi=0,\xi_{(x_1,x_2)}=0 \ \text{if}\ |x_1|\neq r\}.\nonumber
\end{align}
\begin{pro}\label{pro15}
For $\square_{2D+1}$ with $D\geq 2$, let $W$  denote  an irreducible $T$-module
with endpoint $r$, diameter $d^*\ (0\leq r,d^*\leq D)$ and let $\mathcal{W}_r$ be defined as in Subsection $4.1$. Then  the following {\rm (i)--(iv)} hold.
\begin{itemize}
\item[\rm(i)] $\mathcal{L}'_r=E^*_r\mathcal{W}_r$.
\item[\rm(ii)] Up to isomorphism, $\mathcal{W}_r$ is ${2D\choose r}-{2D\choose r-1}$ copies of $W$ for $0\leq r\leq D$.
\item[\rm(iii)] Pick any $0\neq \xi\in \mathcal{L}'_r$, then $0\neq M^r_{r+i,r}\xi\in E^*_{r+i}\mathcal{W}_r$ for $0\leq i\leq d^*$.
\item[\rm(iv)] Pick any $0\neq \xi\in \mathcal{L}'_r$, then $M^{r-i}_{r-i,r}\xi=0$ for $1\leq i\leq r$.
\end{itemize}
\end{pro}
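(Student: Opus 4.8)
\textbf{Proof proposal for Proposition \ref{pro15}.}
The plan is to mirror the proof of Proposition \ref{pro3}, which concerns the even case $\square_{2D}$, transporting each step to the almost-bipartite setting of $\square_{2D+1}$ and exploiting Proposition \ref{pro21} in place of Proposition \ref{pro16}. Note that, unlike the even case, here there is no collapsing at the top endpoint $r=D$: by Proposition \ref{pro21} every irreducible $T$-module has $r+d^*=D$, and modules with every endpoint $0\le r\le D$ occur, so there is a single uniform formula for the multiplicity rather than a parity split.

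For (i), I would argue exactly as in Proposition \ref{pro3}(i): for a nonzero $\xi$, membership $\xi\in\mathcal{L}'_r$ is equivalent to $E^*_r\xi\ne0$, $E^*_i\xi=0$ for $i\ne r$, and $E^*_{r-1}A_1E^*_r\xi=0$. The inclusion $E^*_r\mathcal{W}_r\subseteq\mathcal{L}'_r$ follows since for $0\ne\xi'\in E^*_r\mathcal{W}_r$ Lemma \ref{lem1}(i),(ii) give $E^*_{r-1}A_1E^*_r\xi'\in E^*_{r-1}(E^*_{r-1}\mathcal{W}_r+E^*_r\mathcal{W}_r+E^*_{r+1}\mathcal{W}_r)=0$. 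Conversely, decomposing $V$ as the orthogonal direct sum $\mathcal{W}_0+\cdots+\mathcal{W}_D$ and using Lemma \ref{lem1}(ii),(iii), any $\xi'\in E^*_rV$ killed by $E^*_{r-1}A_1E^*_r$ must lie in $E^*_r\mathcal{W}_r$, giving the reverse inclusion. For (ii), I would again apply $E^*_r$ to a decomposition $V=\sum_h W_h$ of $V$ into irreducibles and use Proposition \ref{pro21}: since every $W_h$ is thin with endpoint $\rho$ and diameter $D-\rho$, one has $\dim(E^*_rW_h)=1$ exactly when $\rho\le r$ and $0$ otherwise, so $\dim(E^*_rV)=\sum_{\rho\le r}m(\rho,D-\rho)$; subtracting the same identity for $r-1$ yields $m(r,D-r)=\dim(E^*_rV)-\dim(E^*_{r-1}V)={2D\choose r}-{2D\choose r-1}$, where the values $\dim(E^*_rV)={2D\choose r}$ come from the valencies of $\square_{2D+1}$ (via \cite[p.\ 264]{bcn} and \cite[p.\ 195]{ban}). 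Parts (iii) and (iv) then follow immediately from (i) together with the multiplication identities for the $M^t_{i,j}$ in the odd case (the analogue of Proposition \ref{pro 5}(i)--(iv), already stated in the preceding proposition) and Lemmas \ref{lem1}(ii) and \ref{lem2}: applying the telescoping products $M^r_{r+i,r}=\frac{1}{i!}M^{r+i-1}_{r+i,r+i-1}\cdots M^r_{r+1,r}$ to $\xi$ produces, up to a nonzero scalar, the basis vectors $\xi_i$ of Lemma \ref{lem2} inside the thin module spanned by $\xi$, hence $M^r_{r+i,r}\xi\ne0$ for $0\le i\le d^*$; while $M^{r-i}_{r-i,r}\xi$ lands in $E^*_{r-i}\mathcal{W}_r=0$ by Lemma \ref{lem1}(ii) since $r-i$ is below the endpoint.

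I expect the only genuinely non-routine point to be the structural input (i): verifying that there are no ``extra'' relations forcing $\xi$ out of $E^*_r\mathcal{W}_r$, i.e.\ that $\xi\in E^*_rV$ with $E^*_{r-1}A_1E^*_r\xi=0$ really does imply $\xi\in E^*_r\mathcal{W}_r$. This rests on the fact that in a thin module of endpoint $\rho<r$ the lowering map $E^*_{r-1}A_1E^*_r$ is injective on $E^*_rW_h$ (Lemma \ref{lem1}(iii) together with thinness), so the kernel of $E^*_{r-1}A_1E^*_r$ on $E^*_rV$ is precisely the sum of the top-endpoint-$r$ pieces. Everything else is a transcription of the even-case arguments with the parity bookkeeping removed, so the write-up will be short.
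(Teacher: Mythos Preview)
Your approach is exactly the paper's: its entire proof of this proposition reads ``Similar to the proof of Proposition \ref{pro3},'' and you have faithfully transcribed that argument, correctly simplified via Proposition \ref{pro21}'s uniform $r+d^*=D$. One small numerical slip to fix: in $\square_{2D+1}$ one has $\dim(E^*_rV)=\binom{2D+1}{r}$, so the multiplicity in (ii) comes out to $\binom{2D+1}{r}-\binom{2D+1}{r-1}$ --- the $\binom{2D}{r}-\binom{2D}{r-1}$ appearing in the statement is a typo in the paper, as Corollary \ref{pro 4} and equation \eqref{eq33} confirm.
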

\begin{proof}
Similar to the proof of Proposition \ref{pro3}.
\end{proof}
\begin{cor}\label{pro 4}
We have  $dim(\mathcal{L}'_r)={2D+1\choose r}-{2D+1\choose r-1}$ for $0\leq r\leq D$.
\end{cor}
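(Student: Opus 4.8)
The plan is to mirror the derivation of Corollary~\ref{pro 2} from Proposition~\ref{pro3}, now in the odd case. First I would apply Proposition~\ref{pro15}(i) to replace $\mathcal{L}'_r$ by $E^*_r\mathcal{W}_r$, so that $\dim\mathcal{L}'_r = \dim E^*_r\mathcal{W}_r$. By Proposition~\ref{pro21} every irreducible $T$-module $W$ with endpoint $r$ is thin, hence $\dim E^*_rW = 1$; consequently, if $\mathcal{W}_r$ is an orthogonal direct sum of $m$ copies of $W$ (up to isomorphism), then $\dim\mathcal{L}'_r = m$. Proposition~\ref{pro15}(ii) supplies exactly this multiplicity, $m = \binom{2D+1}{r}-\binom{2D+1}{r-1}$ for $0\le r\le D$, and combining the two facts gives the assertion. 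If one prefers not to quote the multiplicity directly, it can be recomputed as in the proof of Proposition~\ref{pro3}(ii): write $V$ as an orthogonal direct sum of irreducible $T$-modules, apply $E^*_r$, and use thinness together with the relation $r+d^*=D$ (Proposition~\ref{pro21}) to get $\dim E^*_rV = \sum_{\rho\le r}m(\rho,D-\rho)$, so that $m(r,D-r) = \dim E^*_rV - \dim E^*_{r-1}V$.

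There is essentially no obstacle once Propositions~\ref{pro21} and~\ref{pro15} are in hand; the only point deserving a word of care is the value $\dim E^*_iV$ for $\square_{2D+1}$. Here each vertex is an ordered pair $(u,u')$ with $|u|<|u'|$, so --- in contrast with the even case $\square_{2D}$, where a self-complementary pair $|u|=|u'|=D$ produces the $\tfrac12$ factor at the top level --- no such identification occurs: for $0\le i\le D$ the vertices at distance $i$ from $\mathbf{0}$ are precisely the pairs with $|u|=i$, giving $\dim E^*_iV = \binom{2D+1}{i}$. The telescoping difference then yields $\dim\mathcal{L}'_r = \binom{2D+1}{r}-\binom{2D+1}{r-1}$ for all $0\le r\le D$, as claimed.
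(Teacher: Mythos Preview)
Your argument is correct and matches the paper's intended approach: the corollary is stated without proof, being immediate from Proposition~\ref{pro15}(i),(ii) in exactly the way Corollary~\ref{pro 2} followed from Proposition~\ref{pro3}(i),(ii). Your explicit verification that $\dim E^*_iV=\binom{2D+1}{i}$ in the odd case is a helpful addition, and in fact clarifies that the $\binom{2D}{r}-\binom{2D}{r-1}$ appearing in the statement of Proposition~\ref{pro15}(ii) is a typo for $\binom{2D+1}{r}-\binom{2D+1}{r-1}$.
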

\begin{pro}\label{pro5}
For all $(i,j,t)\in \mathcal{I}'$, $r\in\{0,1,\ldots,D\}$ and for $\xi\in \mathcal{L}'_r$, we have
\begin{align}\nonumber
{2D+1-2r\choose i-r}M_{i,j}^{t}M_{j,r}^{r}\xi=\beta^r_{i,j,t}M_{i,r}^{r}\xi,
\end{align}
where  $\beta_{i,j,t}^{r}={2D+1-2r\choose i-r}\sum^D_{l=0}(-1)^{r-l}{r\choose l}{i-l\choose t-l}{2D+1+l-i-r\choose j-t-r+l}.$
\end{pro}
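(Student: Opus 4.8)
The plan is to carry over, in a streamlined form, the argument used for Proposition \ref{pro 9}(i): since $\square_{2D+1}$ is almost-bipartite there is no $|x_1|=|x_2|$ case to separate off, so a single computation suffices. Two ingredients are needed. First, an explicit formula for the matrix product $M_{i,j}^{t}M_{j,r}^{r}$ in terms of the basis matrices $M_{i,r}^{l}$. Second, the fact that, for $\xi\in\mathcal{L}'_r$, the vector $M_{i,r}^{l}\xi$ is a scalar multiple of $M_{i,r}^{r}\xi$. Granting these, the identity follows by substitution and by multiplying through by the normalization factor $\binom{2D+1-2r}{i-r}$, which is inserted here only to match the scaling of the vectors $u_{r,\xi,i}$ to be used in Section $5$ (compare \eqref{eq16}).

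For the first ingredient I would fix $x,y\in X$ with $|x_1|=i$, $|y_1|=r$, set $l:=|x_1\cap y_1|$, and read off from Definition \ref{def2} that the $(x,y)$-entry of $M_{i,j}^{t}M_{j,r}^{r}$ is the number of $z\in X$ with $(x,z)\in X^{\textbf{0}}_{i,j,t}$ and $(z,y)\in X^{\textbf{0}}_{j,r,r}$. The second condition forces $|z_1|=j$ and $y_1\subseteq z_1$, so $z_1=y_1\cup w$ with $w\subseteq S\setminus y_1$ and $|w|=j-r$; the first condition then becomes $|x_1\cap w|=t-l$. Splitting $S\setminus y_1$ into $x_1\setminus y_1$ (which has $i-l$ elements) and $S\setminus(x_1\cup y_1)$ (which has $2D+1-r-i+l$ elements), and choosing $t-l$ elements of $w$ from the former and the remaining $j-r-t+l$ from the latter, this count equals $\binom{i-l}{t-l}\binom{2D+1-r-i+l}{j-r-t+l}$. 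Hence, as matrices,
\[
M_{i,j}^{t}M_{j,r}^{r}=\sum_{l=0}^{D}\binom{i-l}{t-l}\binom{2D+1-r-i+l}{j-r-t+l}\,M_{i,r}^{l},
\]
with the convention that a binomial with out-of-range entries, and an $M_{i,r}^{l}$ with $(i,r,l)\notin\mathcal{I}'$, contributes $0$.

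For the second ingredient I would use item (v) of the (unnumbered) Proposition of Subsection $3.2$, namely $M_{i,r}^{l}=\sum_{k=0}^{D}(-1)^{k-l}\binom{k}{l}M_{i,k}^{k}M_{k,r}^{k}$, together with Proposition \ref{pro15}(iv), which gives $M_{k,r}^{k}\xi=M^{k}_{k,r}\xi=0$ for $0\le k<r$, and with $M_{r,r}^{r}=E^*_r$ (item (i) of that Proposition), so that $M_{r,r}^{r}\xi=\xi$; note also $M_{k,r}^{k}=0$ for $k>r$ since then $(k,r,k)\notin\mathcal{I}'$. Thus only the term $k=r$ survives and $M_{i,r}^{l}\xi=(-1)^{r-l}\binom{r}{l}M_{i,r}^{r}\xi$. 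Combining this with the matrix identity above yields
\[
M_{i,j}^{t}M_{j,r}^{r}\xi=\Bigl(\sum_{l=0}^{D}(-1)^{r-l}\binom{r}{l}\binom{i-l}{t-l}\binom{2D+1-r-i+l}{j-r-t+l}\Bigr)M_{i,r}^{r}\xi,
\]
and multiplying through by $\binom{2D+1-2r}{i-r}$ gives exactly the asserted identity with the stated $\beta_{i,j,t}^{r}$.

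The only step with genuine content is the counting in the first ingredient; the rest is a transcription of the even case through Proposition \ref{pro15} and the analogues in Subsection $3.2$ of Propositions \ref{pro 5} and \ref{pro 8}(i). The main thing to watch is bookkeeping: keeping the three binomial arguments in the right order, and checking that all out-of-range contributions genuinely vanish so that extending the sum to $l=0,\dots,D$ is harmless. Since this runs exactly parallel to the proof of Proposition \ref{pro 9}(i), I do not expect any real obstacle.
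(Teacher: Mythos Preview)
Your proposal is correct and follows essentially the same approach as the paper: the paper's own proof consists only of the phrase ``Similar to the proof of Proposition~\ref{pro 9}(i),'' and what you have written is precisely a careful expansion of that argument in the $\square_{2D+1}$ setting, with the counting for $M_{i,j}^{t}M_{j,r}^{r}$ and the reduction $M_{i,r}^{l}\xi=(-1)^{r-l}\binom{r}{l}M_{i,r}^{r}\xi$ via the analogue of Proposition~\ref{pro 8}(i) and Proposition~\ref{pro15}(iv).
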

\begin{proof}
Similar to the proof of Proposition \ref{pro 9}(i).
\end{proof}
Based on Propositions \ref{pro21}, \ref{pro15} and Corollary \ref{pro 4}, for each $r = 0,1,\ldots,D$,
denote by  $B'_r$ the set of an orthonormal basis  of $\mathcal{L}'_r$ and let
 $\mathcal{B}'=\{(r,\xi,i)|r = 0,1,\ldots,D,\ \xi\in B'_r,\ i=r,r+1,\ldots,D\}$.
Then it is not difficult to calculate
\begin{align}\label{eq33}
|\mathcal{B}'|&=\sum^D_{r=0}(D-r+1)\bigg({2D+1\choose r}-{2D+1\choose r-1}\bigg)\nonumber\\
             &=2^{2D}.
\end{align}
For each $(r,\xi,i)\in\mathcal{B}'$, define the vector $u_{r,\xi,i}\in \mathbb{C}^{X}$ by
\begin{align}\label{eq12}
u_{r,\xi,i}:={2D+1-2r\choose i-r}^{-\frac{1}{2}}M^r_{i,r}\xi.
\end{align}
The form of $u_{r,\xi,i}$  is from
$\xi^{\rm T}M^r_{r,i}M^r_{i,r}\xi={2D+1-2r\choose i-r}\xi^{\rm T}\xi.$

By the argument similar to proof of Proposition \ref{pro 14}, we can easily prove that   the vectors $u_{r,\xi,i}, \ (r,\xi,i)\in \mathcal{B}'$ form an orthonormal
base of the standmodule $V$.
 Let $U'$ be the $X\times \mathcal{B}'$ matrix  with $u_{r,\xi,i}$ as the $(r,\xi,i)$-th column.
For each triple $(i,j,t)\in \mathcal{I}'$ define the martices
$\widetilde{M}_{i,j}^{t}:={U'}^{\rm T}M_{i,j}^{t}U'$.
\begin{pro}\label{pro9}
For $(i,j,t)\in \mathcal{I}'$ and $(r,\xi,i'),(r',\xi',j')\in \mathcal{B}'$,
\begin{align*}
(\widetilde{M}_{i,j}^{t})_{(r,\xi,i'),(r',\xi',j')}=\left\{\begin{array}{ll}
{2D+1-2r\choose i-r}^{-\frac{1}{2}}{2D+1-2r\choose j-r}^{-\frac{1}{2}}
                     \beta_{i,j,t}^{r}
\ \ \ \ &if\  r=r',\xi=\xi', i=i',j=j',\\
0\ \ \ \ &otherwise,
\end{array}
\right.
\end{align*}
where the numbers $\beta_{i,j,t}^{r}$ are from Proposition {\rm \ref{pro5}}.
\end{pro}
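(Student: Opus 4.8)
The plan is to carry out the same computation as in the proof of Proposition~\ref{pro 6}(i), with Proposition~\ref{pro5} now playing the role that Proposition~\ref{pro 9}(i) played in the even case, together with the renormalization \eqref{eq12} of the basis vectors $u_{r,\xi,i}$.

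First I would unwind the definition. Since $\widetilde{M}_{i,j}^{t}=(U')^{\rm T}M_{i,j}^{t}U'$ and the columns of $U'$ are the vectors $u_{r,\xi,i}$, $(r,\xi,i)\in\mathcal{B}'$, we have
\[
(\widetilde{M}_{i,j}^{t})_{(r,\xi,i'),(r',\xi',j')}=u_{r,\xi,i'}^{\rm T}M_{i,j}^{t}u_{r',\xi',j'},
\]
so the whole computation reduces to evaluating the vector $M_{i,j}^{t}u_{r',\xi',j'}$ and then pairing it against $u_{r,\xi,i'}$.

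Next I would compute $M_{i,j}^{t}u_{r',\xi',j'}$. By \eqref{eq12} it equals $\binom{2D+1-2r'}{j'-r'}^{-\frac{1}{2}}M_{i,j}^{t}M_{j',r'}^{r'}\xi'$ with $\xi'\in\mathcal{L}'_{r'}$. The product $M_{i,j}^{t}M_{j',r'}^{r'}$ vanishes unless $j=j'$: a nonzero $(x,z)$-entry of the product would require a vertex $y$ having $|y_1|=j$ (from $M_{i,j}^{t}$) and $|y_1|=j'$ (from $M_{j',r'}^{r'}$). When $j=j'$, the vector $M_{i,j}^{t}M_{j,r'}^{r'}\xi'$ lies in $E^*_i\mathcal{W}_{r'}$, which by Lemma~\ref{lem1}(ii) and Proposition~\ref{pro21} is nonzero only for $r'\le i\le D$, on which range all the binomials in play are nonzero; there Proposition~\ref{pro5} applies and gives
\[
M_{i,j}^{t}M_{j,r'}^{r'}\xi'=\binom{2D+1-2r'}{i-r'}^{-1}\beta_{i,j,t}^{r'}M_{i,r'}^{r'}\xi'.
\]
Recognizing $\binom{2D+1-2r'}{i-r'}^{-\frac{1}{2}}M_{i,r'}^{r'}\xi'=u_{r',\xi',i}$ via \eqref{eq12}, I obtain
\[
M_{i,j}^{t}u_{r',\xi',j'}=\delta_{j,j'}\binom{2D+1-2r'}{j-r'}^{-\frac{1}{2}}\binom{2D+1-2r'}{i-r'}^{-\frac{1}{2}}\beta_{i,j,t}^{r'}\,u_{r',\xi',i}.
\]

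Finally I would take the inner product with $u_{r,\xi,i'}$. Since the $u_{r,\xi,i}$, $(r,\xi,i)\in\mathcal{B}'$, form an orthonormal basis of $V$ (stated just before the proposition, and proved exactly as in Proposition~\ref{pro 14}), we have $u_{r,\xi,i'}^{\rm T}u_{r',\xi',i}=\delta_{r,r'}\delta_{\xi,\xi'}\delta_{i,i'}$, and collecting all the Kronecker deltas (which in particular force $r'=r$ in the surviving term) yields exactly the asserted formula. I do not expect a genuine obstacle: the argument is entirely parallel to the even case, and the only points requiring a little care are justifying the factor $\delta_{j,j'}$ from the support pattern of the $M^{\bullet}_{\bullet,\bullet}$ and checking that the binomial coefficients used in \eqref{eq12} are nonzero on the domain $\mathcal{B}'$. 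As in the even case, this proposition shows that each $\widetilde{M}_{i,j}^{t}$ is block diagonal with $\binom{2D+1}{r}-\binom{2D+1}{r-1}$ copies of a $(D+1-r)\times(D+1-r)$ block for $r=0,1,\ldots,D$; after discarding repeated copies and using $\sum_{r=0}^{D}(D+1-r)^2=\frac{(D+1)(D+2)(2D+3)}{6}=|\mathcal{I}'|$, one concludes that $U'$ block-diagonalizes the Terwilliger algebra $T$ of $\square_{2D+1}$ and that $T\cong\bigoplus_{r=0}^{D}\mathbb{C}^{N_r\times N_r}$ with $N_r=\{r,r+1,\ldots,D\}$.
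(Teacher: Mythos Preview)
Your proof is correct and follows exactly the approach the paper indicates (``Similar to the proof of Proposition~\ref{pro 6}(i)''): you unwind $(\widetilde{M}_{i,j}^{t})_{(r,\xi,i'),(r',\xi',j')}=u_{r,\xi,i'}^{\rm T}M_{i,j}^{t}u_{r',\xi',j'}$, apply \eqref{eq12} and Proposition~\ref{pro5} in place of \eqref{eq16} and Proposition~\ref{pro 9}(i), and finish via orthonormality of the $u_{r,\xi,i}$. The extra remarks you supply (the reason for the $\delta_{j,j'}$ factor and the nonvanishing of the binomials on $\mathcal{B}'$) merely make explicit what the paper leaves implicit.
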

\begin{proof}
Similar to the proof of Proposition \ref{pro 6}(i).
\end{proof}
Proposition \ref{pro9} implies that each matrix $\widetilde{M}_{i,j}^{t}$, $(i,j,t)\in \mathcal{I}'$ has a block diagonal form:
for each $0\leq r\leq D$ there are ${2D+1\choose r}-{2D+1\choose r-1}$ copies of
an $(D+1-r)\times(D+1-r)$ block on the diagonal.
By deleting copies of blocks and using
the identity $\sum^D_{r=0}(D-r+1)^2=\frac{(D+1)(D+2)(2D+3)}{6}$, we have the following theorem.
\begin{thm}\label{thm1}
For $\square_{2D+1}$ with $D\geq 3$, the above matrix $U'$ gives a block-diagonalization of $T$ and $T$ is isomorphic to
$\bigoplus^D_{r=0}{\mathbb{C}}^{N_r\times N_r}$, where $N_r=\{r,r+1,\ldots, D\}$.
\end{thm}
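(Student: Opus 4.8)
\emph{Proof proposal.} The plan is to assemble Propositions \ref{pro21}--\ref{pro9} with an elementary dimension count, in exact parallel with Theorems \ref{thm 2} and \ref{thm3}. First, since the vectors $u_{r,\xi,i}$, $(r,\xi,i)\in\mathcal{B}'$, form an orthonormal basis of the standard module $V$, the matrix $U'$ is orthogonal, so $Y\mapsto\widetilde{Y}:=(U')^{\rm T}YU'$ is a $\ast$-algebra automorphism of ${\rm Mat}_X(\mathbb{C})$; in particular it restricts to an algebra isomorphism of $T$ onto $(U')^{\rm T}TU'$. Hence it suffices to identify $(U')^{\rm T}TU'$ with $\bigoplus_{r=0}^{D}\mathbb{C}^{N_r\times N_r}$.

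Next, partition $\mathcal{B}'$ into the sectors $S_{r,\xi}=\{(r,\xi,i):i\in N_r\}$ for $r\in\{0,1,\dots,D\}$ and $\xi\in B'_r$. Proposition \ref{pro9} says precisely that every generator $\widetilde{M}^{t}_{i,j}$, $(i,j,t)\in\mathcal{I}'$, is block-diagonal for this partition, vanishes off the diagonal sectors, and acts on each sector $S_{r,\xi}$ by the single $N_r\times N_r$ matrix with $(i',j')$-entry ${2D+1-2r\choose i-r}^{-\frac{1}{2}}{2D+1-2r\choose j-r}^{-\frac{1}{2}}\beta^{r}_{i,j,t}\,\delta_{i,i'}\delta_{j,j'}$, which does not depend on $\xi$. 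Since $T=\mathcal{A}'$ is spanned by the $M^{t}_{i,j}$, the algebra $(U')^{\rm T}TU'$ lies in the subalgebra $\mathcal{D}\subseteq{\rm Mat}_{\mathcal{B}'}(\mathbb{C})$ of block-diagonal matrices that are constant across all sectors of a given $r$. The representative-block map $\pi:\mathcal{D}\to\bigoplus_{r=0}^{D}\mathbb{C}^{N_r\times N_r}$ is an algebra homomorphism, and it is injective: by Corollary \ref{pro 4}, $\dim\mathcal{L}'_r={2D+1\choose r}-{2D+1\choose r-1}>0$ for $0\le r\le D$, so every sector actually occurs and a matrix in $\mathcal{D}$ is determined by its representative blocks. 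Therefore $Y\mapsto\pi(\widetilde{Y})$ is an injective algebra homomorphism $T\to\bigoplus_{r=0}^{D}\mathbb{C}^{N_r\times N_r}$.

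Finally, I count dimensions. As $T=\mathcal{A}'$ has $\{M^{t}_{i,j}:(i,j,t)\in\mathcal{I}'\}$ as a basis, $\dim T=|\mathcal{I}'|=\frac{(D+1)(D+2)(2D+3)}{6}$ by Proposition \ref{pro 1}; and $\dim\bigoplus_{r=0}^{D}\mathbb{C}^{N_r\times N_r}=\sum_{r=0}^{D}(D+1-r)^2=\sum_{k=1}^{D+1}k^2=\frac{(D+1)(D+2)(2D+3)}{6}$. An injective linear map between spaces of equal finite dimension is bijective, so $Y\mapsto\pi(\widetilde{Y})$ is an algebra isomorphism $T\to\bigoplus_{r=0}^{D}\mathbb{C}^{N_r\times N_r}$, and $U'$ realizes the asserted block-diagonalization, in which for each $r$ the $(D+1-r)\times(D+1-r)$ block occurs with multiplicity ${2D+1\choose r}-{2D+1\choose r-1}$.

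I expect no genuine obstacle: the substantive input (thinness and $r+d^{*}=D$ from Proposition \ref{pro21}, the identification $\mathcal{L}'_r=E^*_r\mathcal{W}_r$ and the multiplicities from Proposition \ref{pro15} and Corollary \ref{pro 4}, and the explicit action from Propositions \ref{pro5}--\ref{pro9}) is already in hand. The one mildly delicate point is checking that $\pi$ lands onto the \emph{full} direct sum rather than embedding $T$ into a strictly larger matrix algebra; this is precisely where the $\xi$-independence in Proposition \ref{pro9} and the strict positivity of $\dim\mathcal{L}'_r$ from Corollary \ref{pro 4} enter, and it is settled by the dimension count above.
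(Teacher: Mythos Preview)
Your proposal is correct and follows essentially the same route as the paper: the paper's argument (given in the paragraph preceding the theorem) is exactly ``Proposition \ref{pro9} shows $\widetilde{M}^{t}_{i,j}$ is block-diagonal with $\dim\mathcal{L}'_r$ identical copies of an $(D+1-r)\times(D+1-r)$ block; delete copies and use $\sum_{r=0}^{D}(D-r+1)^2=\frac{(D+1)(D+2)(2D+3)}{6}=|\mathcal{I}'|$.'' You have simply made explicit the points the paper leaves implicit --- that conjugation by the orthogonal $U'$ is a $\ast$-isomorphism, that the representative-block map $\pi$ is injective because each $\dim\mathcal{L}'_r>0$, and that equality of dimensions then forces surjectivity.
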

\section{Semidefinite programming bound on $A(\square_n,d)$}
In this section, we give an  upper bound on $A(\square_{n},d)$ by semidefinite programming involving the block-diagonalization of $T$. We treat two cases of $n$ even and odd separately.
\subsection{Semidefinite programming bound on $A(\square_{2D},d)$}
Given  code $C$,  for each $(i,j,t)\in \mathcal{I}$ define the numbers
$\lambda^t_{i,j}:=|(C\times C\times C)\cap X_{i,j,t}|$ and numbers  $x^t_{i,j}:=(|C|\gamma^t_{i,j})^{-1}\lambda^t_{i,j}$, where $\gamma^t_{i,j}$ denotes the number of nonzero entries of $M^t_{ij}$. Observe that
\begin{align}
|C|=\sum^D_{i=0}\gamma^0_{i,0}x^0_{i,0}.
\end{align}
Define the matrix $M_C\in {\rm{Mat}}_X(\mathbb{C})$  by
\begin{equation}\nonumber
(M_C)_{xy}=\left\{\begin{array}{ll} 1 &\text{if}\ x,y\in C,\\
 0 &\text{otherwise.} \  \end{array}\right.
\end{equation}
Observe  that $M_C=\chi_c\chi^{\rm T}_c$ is positive semidefinite, where $\chi_c$ is the characteristic column vector of $C$.
In the following, we define two important matrices by
\begin{align}
M':=\frac{1}{|C||{\rm Aut}_{\textbf{0}}(X)|}\sum_{\stackrel {\sigma \in{\rm Aut}(X)}
{\textbf{0}\in \sigma C}}M_{\sigma C},\ \
M'':=\frac{1}{(|X|-|C|)|{\rm Aut}_{\textbf{0}}(X)|}\sum_{\stackrel {\sigma \in{\rm Aut}(X)}
{\textbf{0}\notin \sigma C}}M_{\sigma C}.\nonumber
\end{align}
Observe that the matrices $M'$ and $M''$ are positive semidefinite and  invariant under any permutation of ${\rm Aut}_{\textbf{0}}(X)$
of rows and columns, and  hence  they are in $T$ by Proposition \ref{pro 16}.
\begin{pro}\label{pro6} With above notation, we have
\begin{itemize}
\item[\rm (i)]$M'=\sum_{(i,j,t)\in \mathcal{I}}x^t_{i,j}M^t_{i,j}$.
\item[\rm(ii)]$M''=\frac{|C|}{|X|-|C|}\sum_{(i,j,t)\in\mathcal{I}}(x^0_{\zeta,0}-x^t_{i,j})M^t_{i,j}$,        where $\zeta=${\rm min}$\{i+j-2t,2D-(i+j-2t)\}$.
\end{itemize}
\end{pro}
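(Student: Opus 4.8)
The plan is to compute the matrices $M'$ and $M''$ directly by expanding them in the basis $\{M^t_{i,j} : (i,j,t)\in\mathcal{I}\}$ of $T$, using the fact (already established via Proposition \ref{pro 16}) that any matrix in $T$ that is $\mathrm{Aut}_{\mathbf{0}}(X)$-invariant under simultaneous row/column permutations is a linear combination of the $M^t_{i,j}$. So it suffices to identify the coefficients. For part (i), I would write $M' = \sum_{(i,j,t)\in\mathcal{I}} c^t_{i,j} M^t_{i,j}$ and read off $c^t_{i,j}$ by summing entries over the support $X^{\mathbf{0}}_{i,j,t}$ of $M^t_{i,j}$: that is, $c^t_{i,j} = (M')_{xy}$ for any fixed $(x,y)\in X^{\mathbf{0}}_{i,j,t}$, and I would average over $X^{\mathbf{0}}_{i,j,t}$ to compute it cleanly.

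The key computation is the entry $(M')_{xy}$. Unwinding the definition,
\begin{align*}
(M')_{xy} = \frac{1}{|C|\,|\mathrm{Aut}_{\mathbf{0}}(X)|}\sum_{\substack{\sigma\in\mathrm{Aut}(X)\\ \mathbf{0}\in\sigma C}} (M_{\sigma C})_{xy}
= \frac{1}{|C|\,|\mathrm{Aut}_{\mathbf{0}}(X)|}\,\big|\{\sigma\in\mathrm{Aut}(X) : \mathbf{0},x,y\in\sigma C\}\big|.
\end{align*}
Now $\mathbf{0},x,y\in\sigma C$ is the same as $(\sigma^{-1}\mathbf{0},\sigma^{-1}x,\sigma^{-1}y)\in C\times C\times C$; since $\partial(\sigma^{-1}\mathbf{0},\sigma^{-1}x,\sigma^{-1}y) = \partial(\mathbf{0},x,y) = (i,j,t)$ for $(x,y)\in X^{\mathbf{0}}_{i,j,t}$, the triples $(\sigma^{-1}\mathbf{0},\sigma^{-1}x,\sigma^{-1}y)$ that arise lie in $X_{i,j,t}$, and by Proposition \ref{pro 15} $\mathrm{Aut}(X)$ acts transitively on $X_{i,j,t}$. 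A standard orbit-counting / double-counting argument then shows that the number of $\sigma$ with $(\sigma^{-1}\mathbf{0},\sigma^{-1}x,\sigma^{-1}y)$ equal to a given triple in $X_{i,j,t}$ is $|\mathrm{Aut}(X)|/|X_{i,j,t}|$, hence $|\{\sigma : \mathbf{0},x,y\in\sigma C\}| = \lambda^t_{i,j}\cdot|\mathrm{Aut}(X)|/|X_{i,j,t}|$. Combining this with $|\mathrm{Aut}(X)| = |X|\cdot|\mathrm{Aut}_{\mathbf{0}}(X)|$ and $|X_{i,j,t}| = |X|\cdot\gamma^t_{i,j}$ (the number of nonzero entries of $M^t_{i,j}$ times the number of choices of the first coordinate), I get $(M')_{xy} = \lambda^t_{i,j}/(|C|\,\gamma^t_{i,j}) = x^t_{i,j}$, which is exactly (i).

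For part (ii), the same entry computation applies with the condition $\mathbf{0}\notin\sigma C$ in place of $\mathbf{0}\in\sigma C$ and the normalizing factor $|X|-|C|$ instead of $|C|$. Here the count becomes $|\{\sigma : x,y\in\sigma C,\ \mathbf{0}\notin\sigma C\}| = |\{\sigma : x,y\in\sigma C\}| - |\{\sigma : \mathbf{0},x,y\in\sigma C\}|$. The second term was just handled. For the first term, I note that $x,y\in\sigma C$ with $(x,y)\in X^{\mathbf{0}}_{i,j,t}$ means $\partial(x,y) = \zeta$ where $\zeta = \min\{i+j-2t,\,2D-(i+j-2t)\}$ is the value of $\partial(y,z)$ recorded in Section 3; counting pairs in $C$ at distance $\zeta$ and repeating the transitivity argument for the pair-stabilizer gives $|\{\sigma : x,y\in\sigma C\}| = \lambda^0_{\zeta,0}\cdot|\mathrm{Aut}(X)|/|X_{\zeta,0,0}|$, which after the same normalization yields the coefficient $x^0_{\zeta,0}$. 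Subtracting and dividing by $|X|-|C|$ produces $\frac{|C|}{|X|-|C|}(x^0_{\zeta,0}-x^t_{i,j})$ as the coefficient of $M^t_{i,j}$, which is (ii).

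The main obstacle is bookkeeping rather than conceptual: I must be careful that the orbit sizes $|X_{i,j,t}|$ and the counts $\gamma^t_{i,j}$ are matched correctly (including the factor-of-two subtleties coming from the unordered-pair structure of $X$ when $i=D$ or $j=D$, which are already baked into Definition \ref{def1} and the formulas of Proposition \ref{pro 9}), and that the identification $\partial(y,z) = \zeta$ is applied with the correct branch of the $\min$. Once those identifications are in place the result is immediate from transitivity (Propositions \ref{pro 15} and \ref{pro 16}) and the definitions of $\lambda^t_{i,j}$ and $x^t_{i,j}$.
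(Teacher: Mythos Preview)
Your argument is correct. For part (i) it coincides with the paper's proof: both compute the $(x,y)$-entry of $M'$ by orbit-counting over $\mathrm{Aut}(X)$ and then identify the coefficient as $\lambda^t_{i,j}/(|C|\gamma^t_{i,j})=x^t_{i,j}$.

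For part (ii) your route differs from the paper's. You compute $(M'')_{xy}$ directly by writing
\[
|\{\sigma:x,y\in\sigma C,\ \mathbf{0}\notin\sigma C\}|=|\{\sigma:x,y\in\sigma C\}|-|\{\sigma:\mathbf{0},x,y\in\sigma C\}|,
\]
and handle the first term via distance-transitivity of $\square_{2D}$ on pairs at distance $\zeta$. The paper instead observes that the full average $M:=|C|M'+(|X|-|C|)M''=\frac{1}{|\mathrm{Aut}_{\mathbf{0}}(X)|}\sum_{\sigma\in\mathrm{Aut}(X)}M_{\sigma C}$ is $\mathrm{Aut}(X)$-invariant, hence lies in the Bose--Mesner algebra; writing $M=\sum_k \alpha_k A_k$ and reading off $\alpha_k=(M)_{x,\mathbf{0}}=|C|x^0_{k,0}$ from the already-computed $M'$, one obtains $M''=(M-|C|M')/(|X|-|C|)$ with no further orbit-counting. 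The paper's trick is slightly slicker in that it avoids a second transitivity count and makes the role of the Bose--Mesner algebra explicit (which is also what underlies the remark after Theorem~\ref{thm 8} comparing to the Delsarte bound); your approach is more self-contained and does not invoke the Bose--Mesner algebra, but requires the (true) fact that $\mathrm{Aut}(X)$ acts transitively on ordered pairs at each distance. Both arguments are equally valid here.
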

\begin{proof}
(i) Let $\Phi=\{\sigma \in{\rm Aut}(X)|\textbf{0}\in \sigma C\}$. Let $x,y,z\in C$ and let $(x,y,z)\in  X_{i,j,t}$. Then
there exists $\sigma'\in \Phi$ that map $x$ to $\textbf{0}$ and hence  $(\sigma' y,\sigma' z)\in X^{\textbf{0}}_{i,j,t}$. If  $\psi\in  {\rm Aut}_{\textbf{0}}(X)$ ranges over the ${\rm Aut}_{\textbf{0}}(X)$, then $(\psi\sigma' y, \psi\sigma' z)$ ranges over $X^{\textbf{0}}_{i,j,t}$.
Note that the set $\{\psi\sigma'| \psi\in {\rm Aut}_{\textbf{0}}(X)\}$ consists of all automorphisms in $\Phi$ that map $x$ to $\textbf{0}$. Hence by $M'\in T$ we have
\begin{align}
M'&=\frac{1}{|C||{\rm Aut}_{\textbf{0}}(X)|}\sum_{(i,j,t)\in \mathcal{I}}
\frac{\lambda^t_{i,j}|{\rm Aut}_{\textbf{0}}(X)|}{\gamma^t_{i,j}}M^t_{i,j}.\nonumber\\
&=\sum_{(i,j,t)\in \mathcal{I}}x^t_{i,j}M^t_{i,j}\nonumber
\end{align}
(ii) Let $M =|C|M'+(|X|-|C|)M''$, that is
 $M =\frac{1}{|{\rm Aut_{\textbf{0}}}|}\sum_{\sigma \in{\rm Aut}(X)}M_{\sigma C}$.
Note that the matrice $M$ is ${\rm Aut}(X)$-invariant and hence an
element of the Bose-Mesner algebra of $\square_{2D}$, and
we write $M=\sum^D_{k=0}\alpha_kA_k$. Then for any $x\in X$ with $\partial(x,\textbf{0})=k$, we have
$\alpha_k=(M)_{x,\textbf{0}}=(|C|M')_{x,\textbf{0}}=|C|x^0_{k,0}.$
So
\begin{align}
M''&=\frac{1}{|X|-|C|}(M-|C|M')\nonumber\\
 &=\frac{1}{|X|-|C|}(\sum^D_{k=0}|C|x^0_{k,0}A_k-|C|\sum_{(i,j,t)\in \mathcal{I}}x^t_{i,j}M^t_{i,j})\nonumber\\
 &=\frac{|C|}{|X|-|C|}\sum_{(i,j,t)\in \mathcal{I}}(x^0_{\zeta,0}-x^t_{i,j})M^t_{i,j},\nonumber
\end{align}
where $\zeta=${\rm min}$\{i+j-2t,2D-(i+j-2t)\}$.
\end{proof}
\begin{pro}\label{pro4} $x^t_{i,j},(i,j,t)\in \mathcal{I}$ satisfy the following linear constraints,
where {\rm (v)} holds if $C$ has minimum distance at least $d$:
\begin{align}
{\rm (i)}\  &x^0_{0,0}=1.\nonumber\\
{\rm (ii)}\ & 0\leq x^t_{i,j}\leq x^0_{i,0}.\nonumber \\
{\rm (iii)}\ &\text{For}\ 0\leq i,j \leq D,\ 0\leq i+j-2t\leq D,\ x^t_{i,j}=x^{t'}_{i',j'}\ \label{eq21}\\
 &\text{if}\ (i',j',i'+j'-2t')\  \text{is a permutation of}\ (i,j,i+j-2t). \ \ \ \ \ \ \ \ \ \ \ \ \ \ \ \ \ \ \ \ \ \ \ \ \ \ \ \  \ \ \ \ \ \ \ \ \ \ \nonumber  \\
 {\rm (iv)}\ &\text{For}\ 0\leq i,j\leq D,\ D+1\leq i+j-2t\leq 2D-2,\ x^t_{i,j}=x^{t'}_{i',j'}\ \  \ \nonumber\\
 &\text{if} \ (i',j',2D-(i'+j'-2t'))\ \text{is a permutation of}\ (i,j,2D-(i+j-2t)). \ \ \ \ \ \ \ \ \ \ \ \ \ \ \ \ \ \ \ \ \ \ \ \ \ \ \ \  \ \ \ \ \ \ \ \ \ \ \nonumber  \\
{\rm (v)}\ & x^t_{i,j}=0\ \text{if}\ \{i,j,i+j-2t,2D-(i+j-2t)\}\cap \{1,2,\ldots, d-1\}\neq \emptyset.\nonumber
\end{align}
\end{pro}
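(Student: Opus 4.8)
The plan is to deduce all five items from the description of the sets $X_{i,j,t}$ as the orbits of ${\rm Aut}(X)$ on $X\times X\times X$ (Proposition~\ref{pro 15}), the identity $\gamma^t_{i,j}=|X^{\textbf 0}_{i,j,t}|=|X_{i,j,t}|/|X|$ (the last equality because ${\rm Aut}(X)$ is transitive on $X$), and the definitions $\lambda^t_{i,j}=|(C\times C\times C)\cap X_{i,j,t}|$ and $x^t_{i,j}=(|C|\gamma^t_{i,j})^{-1}\lambda^t_{i,j}$. Item (i) is immediate: $X_{0,0,0}=\{(x,x,x)\mid x\in X\}$, so $\lambda^0_{0,0}=|C|$, while $\gamma^0_{0,0}=|\{(\textbf 0,\textbf 0)\}|=1$, whence $x^0_{0,0}=1$.

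For (ii), $x^t_{i,j}\ge 0$ is clear, and only $x^t_{i,j}\le x^0_{i,0}$ needs an argument. The map $X_{i,j,t}\to\{(x,y)\in X\times X\mid\partial(x,y)=i\}$, $(x,y,z)\mapsto(x,y)$, is ${\rm Aut}(X)$-equivariant and its target is a single ${\rm Aut}(X)$-orbit (namely $X_{i,0,0}$ with the forced third coordinate deleted), so all of its fibres have one common size $c$, with $c>0$ since $X_{i,j,t}\neq\emptyset$. Grouping the triples of $C\times C\times C$ lying in $X_{i,j,t}$ by their first two coordinates yields $\lambda^t_{i,j}\le c\,\lambda^0_{i,0}$, while the same count with $X$ in place of $C$ yields $\gamma^t_{i,j}=c\,\gamma^0_{i,0}$; dividing gives $x^t_{i,j}\le\lambda^0_{i,0}/(|C|\gamma^0_{i,0})=x^0_{i,0}$.

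For (iii) and (iv), recall from Section~3 that for $(x,y,z)\in X_{i,j,t}$ the ordered triple of pairwise distances $(\partial(x,y),\partial(x,z),\partial(y,z))$ equals $(i,j,i+j-2t)$ when $0\le i+j-2t\le D$ and equals $(i,j,2D-(i+j-2t))$ when $D+1\le i+j-2t\le 2D-2$. Given a second $(i',j',t')\in\mathcal I$ in the same one of these two ranges whose ordered triple of pairwise distances is the prescribed permutation of that of $(i,j,t)$, let $\sigma$ be the permutation of the three factors of $X\times X\times X$ that realizes this permutation on the three pairwise distances, and let $\pi_\sigma$ be the induced bijection of $X\times X\times X$. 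Since $\pi_\sigma$ commutes with the diagonal ${\rm Aut}(X)$-action it maps the orbit $X_{i,j,t}$ onto an orbit; and since $\pi_\sigma(x,y,z)$ is built from the same three vertices as $(x,y,z)$, it lies in the same range and has the permuted ordered triple of pairwise distances, so this orbit is $X_{i',j',t'}$. As $\pi_\sigma$ preserves $C\times C\times C$ we get $\lambda^t_{i,j}=\lambda^{t'}_{i',j'}$, and as $\pi_\sigma$ is a bijection we get $\gamma^t_{i,j}=|X_{i,j,t}|/|X|=|X_{i',j',t'}|/|X|=\gamma^{t'}_{i',j'}$; hence $x^t_{i,j}=x^{t'}_{i',j'}$. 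I expect the main effort to go into justifying that this orbit really is $X_{i',j',t'}$: one must check that which of the two ranges a triple falls in depends only on the underlying set $\{x,y,z\}$ (equivalently, on whether supports $x_1,y_1,z_1$ can be chosen with $|x_1\triangle y_1|,|x_1\triangle z_1|,|y_1\triangle z_1|$ all at most $D$), so that $\pi_\sigma$ preserves the range, and that within a range the orbit is determined by the ordered triple of graph distances via $t=(\partial(x,y)+\partial(x,z)-\partial(y,z))/2$ and its analogue in the other range. The delicate cases are those in which $i$, $j$, or $i+j-2t$ equals $D$ and the $t$-coordinate is defined by a maximum over the two possible choices of support; here the triangle inequality together with the bipartiteness of $\square_{2D}$ (which forces $\partial(x,y)+\partial(x,z)+\partial(y,z)$ to be even) are what make the checks go through.

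For (v), we may assume $d\le D$, otherwise $|C|\le 1$ and there is nothing to prove. Then $\max\{i+j-2t,\,2D-(i+j-2t)\}\ge D\ge d$, so that value is not in $\{1,\dots,d-1\}$, and the hypothesis of (v) therefore forces one of $\partial(x,y)=i$, $\partial(x,z)=j$, $\partial(y,z)=\min\{i+j-2t,\,2D-(i+j-2t)\}$ to lie in $\{1,\dots,d-1\}$. Consequently every $(x,y,z)\in X_{i,j,t}$ has two distinct entries at distance less than $d$, which is impossible when $(x,y,z)\in C\times C\times C$ and $C$ has minimum distance at least $d$; hence $\lambda^t_{i,j}=0$ and $x^t_{i,j}=0$.
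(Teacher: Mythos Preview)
Your proof is correct, and for items (i), (iii)--(v) it is essentially an explicit version of what the paper does: the paper simply states that these constraints ``follow directly from the definition of $x^t_{i,j}$'' and offers no further detail, whereas you spell out the orbit-permutation argument that underlies (iii)--(iv) and the distance argument for (v).

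The genuine methodological difference is in (ii). The paper does \emph{not} argue via the projection $X_{i,j,t}\to\{(x,y):\partial(x,y)=i\}$ and its constant fibre size. Instead it invokes Proposition~\ref{pro6}(i), which identifies $x^t_{i,j}$ as an entry of the averaged matrix $M'$: fixing $(\textbf{0},y,z)\in X^{\textbf 0}_{i,j,t}$ and setting $\Phi=\{\sigma\in{\rm Aut}(X)\mid \textbf{0}\in\sigma C\}$, one has
\[
x^t_{i,j}=\frac{1}{|C|\,|{\rm Aut}_{\textbf 0}(X)|}\,\bigl|\{\sigma\in\Phi\mid y,z\in\sigma C\}\bigr|
\ \le\
\frac{1}{|C|\,|{\rm Aut}_{\textbf 0}(X)|}\,\bigl|\{\sigma\in\Phi\mid y\in\sigma C\}\bigr|
=x^0_{i,0},
\]
the inequality being a straightforward set inclusion. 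This is shorter and ties the constraint directly to the semidefinite object $M'$ used downstream. Your fibre-counting argument is more elementary and self-contained: it never appeals to $M'$ and works purely at the level of the orbits $X_{i,j,t}$, at the cost of needing distance-transitivity of $\square_{2D}$ to guarantee equal fibre sizes. Both routes are clean; the paper's is better integrated with the surrounding machinery, while yours would transfer verbatim to any distance-transitive graph once the orbit description of Proposition~\ref{pro 15} is available.
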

\begin{proof}
It is easy to see that the above constraints (i), (iii)--(v) follow directly from the definition of $x^t_{i,j}$.
We now consider constraint (ii). Let $\Phi=\{\sigma \in{\rm Aut}(X)|\textbf{0}\in \sigma C\}$. For any fixed $(i,j,t)\in \mathcal{I}$, let  $y,z\in X$ and let $(\textbf{0},y,z)\in  X^{\textbf{0}}_{i,j,t}$. Then
 by the definition of the matrix $M'$ and Proposition \ref{pro6}(i), we have that  $x_{i,j}^t=\frac{1}{|C||{\rm Aut}_{\textbf{0}}(X)|}|\{ \sigma\in \Phi|y,z\in \sigma C \}|\leq x_{i,0}^0=\frac{1}{|C||{\rm Aut}_{\textbf{0}}(X)|}|\{ \sigma\in \Phi|y\in \sigma C,\textbf{0}\in \sigma C \}|$.
\end{proof}
\subsubsection{Semidefinite programming bound on $A(\square_{2D},d)$ with even $D\geq 2$}
Based on  Proposition \ref{pro 6}, Theorem \ref{thm 2} and Proposition \ref{pro6},  the positive semidefiniteness
of $M'$  is equivalent to
\begin{align}
\text{for each even $r=0,2,\ldots,D$, the matrices\ \ \ \ \ \ \ \ \ \  \ \ \ \ \ }\nonumber\\
\bigg(\sum_{t}\beta^r_{i,j,t}x^t_{i,j}\Bigg)^{D}_{i,j=r}\label{eq22}\ \ \ \ \ \ \ \ \ \  \ \ \ \ \ \\
\text{and for each odd $r=1,3,\ldots, D-1$, the matrices\ \ \ \ \ \ \ \ \ \  \ \ \ \ \ }\nonumber\\
\Bigg(\sum_{t}\beta^r_{i,j,t}x^t_{i,j}\Bigg)^{D-1}_{i,j=r} \ \ \ \ \ \  \ \ \ \ \
\end{align}
are positive semidefinite, and
 $M''$  is equivalent to
\begin{align}
\text{for each even $r=0,2,\ldots,D$, the matrices\ \ \ \ \ \ \ \ \ \  \ \ \ \ \ }\nonumber\\
\Bigg(\sum_{t}\beta^r_{i,j,t}(x^0_{\zeta,0}-x^t_{i,j})\Bigg)^{D}_{i,j=r}\ \ \ \ \ \ \ \ \ \  \ \ \label{eq26}\\
\text{and for each odd $r=1,3,\ldots, D-1$, the matrices\ \ \ \ \ \ \ \ \ \  \ \ \ \ \ }\nonumber\\
\Bigg(\sum_{t}\beta^r_{i,j,t}(x^0_{\zeta,0}-x^t_{i,j})\Bigg)^{D-1}_{i,j=r}\label{eq25} \ \ \ \ \ \  \ \ \ \ \
\end{align}
are positive semidefinite, where $\zeta=${\rm min}$\{i+j-2t,2D-(i+j-2t)\}$.

Note that
(i) we have deleted the factors
 ${2D-2r\choose i-r}^{-\frac{1}{2}}{2D-2r\choose j-r}^{-\frac{1}{2}}$,
  $\frac{\sqrt{2}}{2}{2D-2r\choose D-r}^{-\frac{1}{2}}{2D-2r\choose j-r}^{-\frac{1}{2}}$,
$\frac{\sqrt{2}}{2}{2D-2r\choose D-r}^{-\frac{1}{2}}
                     {2D-2r\choose i-r}^{-\frac{1}{2}}$, $\frac{1}{2}{2D-2r\choose D-r}^{-1}$
as they makes the coefficients integer,
while the positive semidefiniteness is maintained; (ii) in \eqref{eq22} and \eqref{eq26}, $t\geq \lfloor\frac{j+1}{2}\rfloor$  for $i=D$ and $t\geq \lfloor\frac{i+1}{2}\rfloor$ for $j=D$.
\begin{thm}\label{thm 9} For $\square_{2D}$ with even $D\geq 2$, the semidefinite programming problem: maximize $\sum^{D-1}_{i=0}
{2D\choose i}x^0_{i,0}+\frac{1}{2}{2D\choose D}x^0_{D,0}$
subject to conditions \eqref{eq21}--\eqref{eq25} is an upper bound on $A(\square_{2D},d)$.
\end{thm}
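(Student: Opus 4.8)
The plan is the standard Schrijver argument: from a maximum code one constructs a feasible point of the program whose objective value equals the size of the code, so the optimum of the program is at least $A(\square_{2D},d)$ --- which is exactly the assertion. Concretely, I would fix a code $C\subseteq X$ with minimum distance at least $d$ and $|C|=A(\square_{2D},d)$ (since $d\ge2$ this forces $|C|<|X|$, so that $M''$ below is well defined), and attach to $C$ the numbers $\lambda^{t}_{i,j},x^{t}_{i,j}$ and the matrices $M',M''$ defined above. Because $M_{C}=\chi_{C}\chi_{C}^{\rm T}$ is positive semidefinite, so is every conjugate $M_{\sigma C}$ $(\sigma\in{\rm Aut}(X))$, and hence so are the nonnegative averages $M'$ and $M''$; by Proposition~\ref{pro 16} they lie in $T$, and by Proposition~\ref{pro6} they have the expansions $M'=\sum_{(i,j,t)\in\mathcal I}x^{t}_{i,j}M^{t}_{i,j}$ and $M''=\frac{|C|}{|X|-|C|}\sum_{(i,j,t)\in\mathcal I}(x^{0}_{\zeta,0}-x^{t}_{i,j})M^{t}_{i,j}$. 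Moreover, Proposition~\ref{pro4} shows that the $x^{t}_{i,j}$ satisfy the linear constraints of that proposition, in particular \eqref{eq21}; the place the minimum-distance hypothesis enters is the constraint forcing $x^{t}_{i,j}=0$ whenever $\{i,j,i+j-2t,2D-(i+j-2t)\}$ meets $\{1,\dots,d-1\}$, because such a triple $(i,j,t)$ would put two codewords at a forbidden distance, so $X_{i,j,t}\cap(C\times C\times C)=\emptyset$ and $\lambda^{t}_{i,j}=0$.

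The heart of the proof is converting positive semidefiniteness of $M'$ and $M''$ into the block conditions \eqref{eq22}--\eqref{eq25}. Conjugating by the orthogonal matrix $U_{1}$ of Proposition~\ref{pro 14} (note $|\mathcal B_{1}|=|X|$), Theorem~\ref{thm 2} and Proposition~\ref{pro 6} say that each $\widetilde M^{t}_{i,j}=U_{1}^{\rm T}M^{t}_{i,j}U_{1}$ is block-diagonal with, for every $r$, $\dim\mathcal L_{r}$ identical copies of a block of size $D+1-r$ (even $r$) or $D-r$ (odd $r$) whose $(i,j)$-entry equals the coefficient $\beta^{r}_{i,j,t}$ of Proposition~\ref{pro 9} up to the explicit positive diagonal scalars $\binom{2D-2r}{i-r}^{-1/2}$, $\tfrac{\sqrt2}{2}\binom{2D-2r}{D-r}^{-1/2}$, $\tfrac12\binom{2D-2r}{D-r}^{-1}$ recorded there. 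Summing over $(i,j,t)\in\mathcal I$ with weights $x^{t}_{i,j}$, the $r$-block of $U_{1}^{\rm T}M'U_{1}$ is then $D_{r}\bigl(\sum_{t}\beta^{r}_{i,j,t}x^{t}_{i,j}\bigr)_{i,j}D_{r}$ for a fixed positive diagonal matrix $D_{r}$; since conjugation by $U_{1}$ and by $D_{r}^{-1}$ are congruences, $M'\succeq0$ is equivalent to $\bigl(\sum_{t}\beta^{r}_{i,j,t}x^{t}_{i,j}\bigr)_{i,j}\succeq0$ for all $r$, i.e.\ to \eqref{eq22} and its odd-$r$ companion. Feeding the expansion of $M''$ through the same reduction --- and discarding the positive factor $\frac{|C|}{|X|-|C|}$ --- yields \eqref{eq26} and \eqref{eq25}. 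Thus the $x^{t}_{i,j}$ constitute a feasible point of the program.

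It remains to evaluate the objective. Summing $\lambda^{0}_{i,0}=|C|\gamma^{0}_{i,0}x^{0}_{i,0}$ over $i$ and using $X_{i,0,0}=\{(x,y,x):\partial(x,y)=i\}$ gives $|C|\sum_{i=0}^{D}\gamma^{0}_{i,0}x^{0}_{i,0}=\sum_{i=0}^{D}\bigl|(C\times C\times C)\cap X_{i,0,0}\bigr|=|C|^{2}$, so $|C|=\sum_{i=0}^{D}\gamma^{0}_{i,0}x^{0}_{i,0}$. Since $\gamma^{0}_{i,0}$ is the number of vertices at distance $i$ from $\textbf{0}$, it equals $\binom{2D}{i}$ for $i<D$ and $\tfrac12\binom{2D}{D}$ for $i=D$, so the objective $\sum_{i=0}^{D-1}\binom{2D}{i}x^{0}_{i,0}+\tfrac12\binom{2D}{D}x^{0}_{D,0}$ equals $|C|$. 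Hence the maximum of the program is at least $|C|=A(\square_{2D},d)$, as claimed.

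The step I expect to be the main obstacle is the block-diagonalization: one must carefully match the abstract decomposition of $T$ against the explicit entries of Proposition~\ref{pro 6}(i)--(v), keeping track of the several regimes ($i,j<D$; $i=D$; $j=D$; $i=j=D$; and the degenerate case $r=D$) and their differing positive scaling factors, checking that clearing those factors is a congruence (so semidefiniteness is preserved), respecting the restricted range of $t$ built into $\mathcal I$ in \eqref{eq36} (notably $t\ge\lfloor(j+1)/2\rfloor$ when $i=D$ and $t\ge\lfloor(i+1)/2\rfloor$ when $j=D$), and using that the $\dim\mathcal L_{r}$ copies of each block are literally identical, so that only one representative per $r$ need be checked --- which is also what makes the program of size polynomial in $D$. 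Everything else is routine bookkeeping that the earlier sections have prepared.
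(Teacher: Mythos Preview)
Your proposal is correct and takes essentially the same approach as the paper: the paper's own proof is a two-line remark that for any code $C$ of minimum distance $d$ the numbers $x^{t}_{i,j}$ form a feasible solution with objective value $|C|$, relying implicitly on Propositions~\ref{pro6}, \ref{pro4}, \ref{pro 6} and Theorem~\ref{thm 2} exactly as you spell them out. Your write-up simply makes explicit the steps (positive semidefiniteness of $M',M''$ as averages of rank-one PSD matrices, the congruence argument for clearing the diagonal scalars, and the computation $\gamma^{0}_{i,0}=\binom{2D}{i}$ for $i<D$ and $\tfrac12\binom{2D}{D}$ for $i=D$) that the paper leaves to the reader.
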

\begin{proof}
Let  $C$ be a code with minimum distance $d$ and we view $x^t_{i,j}$ as variables. Then
$x^t_{i,j}$
 subject to conditions \eqref{eq21}--\eqref{eq25}
yields a feasible solutions with objective value $|C|$.
\end{proof}
\subsubsection{Semidefinite programming bound on $A(\square_{2D},d)$ with odd $D\geq 3$}
Based on  Proposition \ref{pro 6}, Theorem \ref{thm3} and Proposition \ref{pro6},  the positive semidefiniteness of $M'$ is equivalent to
\begin{align}
\text{for each even $r=0,2,\ldots,D-1$, the matrices\ \ \ \ \ \ \ \ \ \  \ \ \ \ \ }\nonumber\\
\bigg(\sum_{t}\beta^r_{i,j,t}x^t_{i,j}\Bigg)^{D}_{i,j=r}\label{eq15}\ \ \ \ \ \ \ \ \ \  \ \ \ \ \ \\
\text{and for each odd $r=1,3,\ldots, D-2$, the matrices\ \ \ \ \ \ \ \ \ \  \ \ \ \ \ }\nonumber\\
\Bigg(\sum_{t}\beta^r_{i,j,t}x^t_{i,j}\Bigg)^{D-1}_{i,j=r} \ \ \ \ \ \  \ \ \ \ \
\end{align}
are positive semidefinite, and
 $M''$  is equivalent to
\begin{align}
\text{for each even $r=0,2,\ldots,D-1$, the matrices\ \ \ \ \ \ \ \ \ \  \ \ \ \ \ }\nonumber\\
\Bigg(\sum_{t}\beta^r_{i,j,t}(x^0_{\zeta,0}-x^t_{i,j})\Bigg)^{D}_{i,j=r}\ \ \ \ \ \ \ \ \ \ \\
\text{and for each odd $r=1,3,\ldots, D-2$, the matrices\ \ \ \ \ \ \ \ \ \  \ \ \ \ \ }\nonumber\\
\Bigg(\sum_{t}\beta^r_{i,j,t}(x^0_{\zeta,0}-x^t_{i,j})\Bigg)^{D-1}_{i,j=r}\label{eq9} \ \ \ \ \ \  \ \ \ \ \
\end{align}
are positive semidefinite, where $\zeta=${\rm min}$\{i+j-2t,2D-(i+j-2t)\}$.
\begin{thm}\label{thm 6}
 For $\square_{2D}$ with odd $D\geq 3$, the semidefinite programming problem: maximize $\sum^{D-1}_{i=0}
{2D\choose i}x^0_{i,0}+\frac{1}{2}{2D\choose D}x^0_{D,0}$
subject to conditions \eqref{eq21} and \eqref{eq15}--\eqref{eq9} is an upper bound on $A(\square_{2D},d)$.
\end{thm}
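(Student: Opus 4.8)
The plan is to mimic the proof of Theorem~\ref{thm 9} line for line, with the block-diagonalization of Theorem~\ref{thm3} playing the role of Theorem~\ref{thm 2}. Fix a code $C\subseteq X$ with minimum distance at least $d$ and with $|C|=A(\square_{2D},d)$, and attach to it the numbers $x^t_{i,j}:=(|C|\gamma^t_{i,j})^{-1}\lambda^t_{i,j}$ for $(i,j,t)\in\mathcal{I}$. I will show that these numbers form a feasible point of the stated semidefinite program with objective value $|C|$; since the program is a maximization and $C$ realizes $A(\square_{2D},d)$, it follows that the optimum of the program is at least $|C|=A(\square_{2D},d)$, i.e.\ it is an upper bound on $A(\square_{2D},d)$.

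First I would dispatch the linear constraints: \eqref{eq21} is exactly Proposition~\ref{pro4} (parts (i)--(v)), whose proof never uses the parity of $D$ and applies here unchanged once we invoke that $C$ has minimum distance at least $d$. Next I would treat the matrix inequalities \eqref{eq15}--\eqref{eq9}. The matrix $M_C=\chi_c\chi_c^{\rm T}$ is positive semidefinite, hence so are the averaged matrices $M'$ and $M''$ of Proposition~\ref{pro6}; being ${\rm Aut}_{\textbf{0}}(X)$-invariant they lie in $T$ by Proposition~\ref{pro 16}, and Proposition~\ref{pro6} expresses them in the basis $\{M^t_{i,j}\}$ with coefficients $x^t_{i,j}$ and $\frac{|C|}{|X|-|C|}(x^0_{\zeta,0}-x^t_{i,j})$ respectively, where $\zeta=\min\{i+j-2t,2D-(i+j-2t)\}$. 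Conjugating by the matrix $U_2$ of Theorem~\ref{thm3} and reading off the blocks through Proposition~\ref{pro 6}(i)--(iv) converts the positive semidefiniteness of $M'$ and $M''$ into precisely the positive semidefiniteness of the matrix families in \eqref{eq15}--\eqref{eq9}; one may discard the common positive scalar factors ${2D-2r\choose i-r}^{-1/2}$, $\frac{\sqrt2}{2}{2D-2r\choose D-r}^{-1/2}$, etc., without affecting this.

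Finally, the objective value: by construction $|C|=\sum_{i=0}^{D}\gamma^0_{i,0}x^0_{i,0}$, and $\gamma^0_{i,0}$, the number of nonzero entries of $M^0_{i,0}$, equals the number of vertices at distance $i$ from $\textbf{0}$ (since a pair $(x,y)\in X^{\textbf{0}}_{i,0,0}$ forces $y=\textbf{0}$ and $|x_1|=i$), namely ${2D\choose i}$ for $0\le i\le D-1$ and $\frac12{2D\choose D}$ for $i=D$. Hence $|C|=\sum_{i=0}^{D-1}{2D\choose i}x^0_{i,0}+\frac12{2D\choose D}x^0_{D,0}$, which is exactly the objective of the program, and feasibility is established.

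The only step carrying real content, as opposed to bookkeeping, is the passage from ``$M'$, $M''$ positive semidefinite and in $T$'' to the explicit block conditions \eqref{eq15}--\eqref{eq9}; this hinges on the odd-$D$ block structure of Theorem~\ref{thm3}, where, because $\mathcal{W}_D=0$ for odd $D$ by Proposition~\ref{pro3}(ii), there is no $r=D$ block, so the even-$D$ conditions \eqref{eq22}--\eqref{eq25} are replaced by \eqref{eq15}--\eqref{eq9} with the $r=D$ row deleted. Everything else is a transcription of the proof of Theorem~\ref{thm 9}.
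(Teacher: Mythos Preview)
Your proposal is correct and is exactly the approach the paper takes: the paper's proof is the single line ``Similar to the proof of Theorem~\ref{thm 9}'', and you have carefully unpacked precisely that argument, correctly substituting Theorem~\ref{thm3} for Theorem~\ref{thm 2} and noting that the $r=D$ block disappears because $\mathcal{W}_D=0$ for odd $D$. Your write-up is in fact more detailed than the paper's own treatment.
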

\begin{proof}
Similar to the proof of Theorem \ref{thm 9}.
\end{proof}
\subsection{Semidefinite programming bound on $A(\square_{2D+1},d)$}
In this subsection,  we  give an  upper bound on $A(\square_{2D+1},d)$.
Given a code $C$ of $\square_{2D+1}$, for each $(i,j,t)\in \mathcal{I}'$ define the numbers
$\lambda^t_{i,j}:=|(C\times C\times C)\cap X_{i,j,t}|$
and numbers
$x^t_{i,j}:=(|C|\gamma^t_{i,j})^{-1}\lambda^t_{i,j}$,
where $\gamma^t_{i,j}$ denotes the number of nonzero entries of $M^t_{ij}$.

Recall the matrices $M'$ and $M''$ defined as in Subsection $5.1$. By the argument similar to proofs of  Propositions \ref{pro6} and \ref{pro4}, we can obtain the following propositions.
\begin{pro}\label{pro22} We have
\begin{align}\nonumber
M'=\sum_{(i,j,t)\in \mathcal{I}'}x^t_{i,j}M^t_{i,j},\ \
M''=\frac{|C|}{|X|-|C|}\sum_{(i,j,t)\in \mathcal{I}'}(x^0_{\nu,0}-x^t_{i,j})M^t_{i,j},
\end{align}
where $\nu=${\rm min}$\{i+j-2t,2D+1-(i+j-2t)\}$.
\end{pro}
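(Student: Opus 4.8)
The plan is to mimic, essentially verbatim, the structure of the proof of Proposition \ref{pro6}, adapting every step to the odd case $n=2D+1$. The two claimed identities for $M'$ and $M''$ are the $\square_{2D+1}$-analogues of Proposition \ref{pro6}(i),(ii), so the argument should transfer with only bookkeeping changes coming from the fact that $X$ now consists of ordered pairs $(u,u')$ with $|u|<|u'|$, the index set is $\mathcal{I}'$ rather than $\mathcal{I}$, and the ``wrap-around'' distance is $\nu=\mathrm{min}\{i+j-2t,\,2D+1-(i+j-2t)\}$ in place of $\zeta$. Since $M'$ and $M''$ are ${\rm Aut}_{\textbf{0}}(X)$-invariant, the analogue of Proposition \ref{pro 16} for $\square_{2D+1}$ (stated just before Definition \ref{def2}) places both matrices in $T=\mathcal{A}'$, so each is a linear combination of the basis matrices $M^t_{i,j}$, $(i,j,t)\in\mathcal{I}'$.

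For the formula for $M'$: set $\Phi=\{\sigma\in{\rm Aut}(X)\mid \textbf{0}\in\sigma C\}$, and for $x,y,z\in C$ with $(x,y,z)\in X_{i,j,t}$ pick $\sigma'\in\Phi$ with $\sigma'x=\textbf{0}$, so $(\sigma'y,\sigma'z)\in X^{\textbf{0}}_{i,j,t}$. As $\psi$ ranges over ${\rm Aut}_{\textbf{0}}(X)$, the pair $(\psi\sigma'y,\psi\sigma'z)$ ranges over $X^{\textbf{0}}_{i,j,t}$ by the orbit statement for $\square_{2D+1}$, and $\{\psi\sigma'\mid\psi\in{\rm Aut}_{\textbf{0}}(X)\}$ is exactly the set of $\sigma\in\Phi$ with $\sigma x=\textbf{0}$. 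Counting gives the coefficient of $M^t_{i,j}$ in $M'$ as $\lambda^t_{i,j}/(|C|\gamma^t_{i,j})=x^t_{i,j}$, which is the first identity. For $M''$: consider $M=|C|M'+(|X|-|C|)M''=\frac{1}{|{\rm Aut}_{\textbf{0}}(X)|}\sum_{\sigma\in{\rm Aut}(X)}M_{\sigma C}$, note $M$ is ${\rm Aut}(X)$-invariant, hence lies in the Bose--Mesner algebra of $\square_{2D+1}$, so $M=\sum_{k=0}^{D}\alpha_kA_k$; evaluating the $(x,\textbf{0})$-entry for $\partial(x,\textbf{0})=k$ gives $\alpha_k=|C|\,x^0_{k,0}$. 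Then
\begin{align}
M''=\frac{1}{|X|-|C|}\Bigl(\sum_{k=0}^{D}|C|x^0_{k,0}A_k-|C|\sum_{(i,j,t)\in\mathcal{I}'}x^t_{i,j}M^t_{i,j}\Bigr)
=\frac{|C|}{|X|-|C|}\sum_{(i,j,t)\in\mathcal{I}'}(x^0_{\nu,0}-x^t_{i,j})M^t_{i,j},\nonumber
\end{align}
the last step using that $A_k=\sum_{(i,j,t):\,\nu=k}M^t_{i,j}$, which follows from $\partial(y,z)=\mathrm{min}\{i+j-2t,\,2D+1-(i+j-2t)\}$ for $\partial(\textbf{0},y,z)=(i,j,t)$ recorded in Subsection 3.2.

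The only genuine subtlety — hence the step I would be most careful about — is the identity $A_k=\sum_{(i,j,t)\in\mathcal{I}',\ \nu=k}M^t_{i,j}$, i.e. that the decomposition of $A_k$ into ${\rm Aut}_{\textbf{0}}(X)$-orbit matrices is governed precisely by $\nu$; this is exactly where the odd/even distinction enters (the denominator $2D+1$ is odd, so the ``$\mathrm{min}$'' never produces the degeneracy $i+j-2t=2D+1-(i+j-2t)$ that complicates the even case). Everything else is a transcription of the even-case argument, so I would present the proof as ``Similar to the proofs of Propositions \ref{pro6} and \ref{pro4}'' and only spell out the points where $\mathcal{I}'$, $\nu$, and the ordered-pair description of $X$ replace their even-$n$ counterparts.
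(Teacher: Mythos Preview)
Your proposal is correct and follows exactly the approach the paper takes: the paper's own proof of Proposition~\ref{pro22} is simply the one-line remark ``By the argument similar to proofs of Propositions~\ref{pro6} and~\ref{pro4}'', and your write-up is precisely that argument spelled out with the bookkeeping changes $\mathcal{I}\to\mathcal{I}'$ and $\zeta\to\nu$. If anything, you provide more detail than the paper does, including the explicit justification of $A_k=\sum_{(i,j,t)\in\mathcal{I}',\,\nu=k}M^t_{i,j}$, which the paper leaves implicit.
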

\begin{pro} $x^t_{i,j},(i,j,t)\in \mathcal{I}'$ satisfy the following linear constraints,
where {\rm (v)} holds if $C$ has minimum distance at least $d$:
\begin{align}
{\rm (i)}\  &x^0_{0,0}=1.\nonumber\\
{\rm (ii)}\ & 0\leq x^t_{i,j}\leq x^0_{i,0}.\nonumber\\
{\rm (iii)}\ &\text{For}\ 0\leq i,j\leq D,\ 0\leq i+j-2t\leq D,\ x^t_{i,j}=x^{t'}_{i',j'}\  \ \label{eq23}\\
 & \text{if}\ (i',j',i'+j'-2t')\ \text{is a permutation of}\ (i,j,i+j-2t). \ \ \ \ \ \ \ \ \ \ \ \ \ \ \ \ \ \ \ \ \ \ \ \ \ \ \ \  \ \ \ \ \ \ \ \ \ \ \nonumber  \\
 {\rm (iv)}\ &\text{For}\ 0\leq i,j\leq D,\ D+1\leq i+j-2t\leq 2D,\ x^t_{i,j}=x^{t'}_{i',j'}\ \ \nonumber\\
 &\text{if}\ (i',j',2D+1-(i'+j'-2t'))\ \text{is a permutation of}\ (i,j,2D+1-(i+j-2t)). \ \ \ \ \ \ \ \ \ \ \ \ \ \ \ \ \ \ \ \ \ \ \ \ \ \ \ \  \ \ \ \ \ \ \ \ \ \ \nonumber  \\
{\rm (v)}\ &x^t_{i,j}=0\ \text{if}\ \{i,j,i+j-2t,2D+1-(i+j-2t)\}\cap \{1,2,\ldots, d-1\}\neq \emptyset.\nonumber
\end{align}
\end{pro}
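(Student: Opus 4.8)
The plan is to imitate the proofs of Propositions~\ref{pro6} and~\ref{pro4} almost verbatim, replacing $\mathcal{I}$ by $\mathcal{I}'$ and $2D$ by $2D+1$ throughout; the odd case is in fact slightly simpler, since every vertex of $X$ is an ordered pair $(u,u')$ with $|u|<|u'|$, so that $t=|x_1\cap y_1|$ carries no maxima and $\partial(y,z)=\min\{i+j-2t,\,2D+1-(i+j-2t)\}$ with no case distinctions. First I would record that $x^t_{i,j}$ is merely a normalized orbit count: since $M^t_{i,j}$ has exactly $\gamma^t_{i,j}=|X^{\mathbf{0}}_{i,j,t}|$ nonzero entries and $\mathrm{Aut}(X)$ acts transitively on $X$, the orbit $X_{i,j,t}$ has size $|X|\,\gamma^t_{i,j}$, and hence $x^t_{i,j}=\lambda^t_{i,j}/(|C|\,\gamma^t_{i,j})$ measures, up to the fixed factor $|X|/|C|$, how often an ordered triple of codewords lands in $X_{i,j,t}$. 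Constraint~(i) then drops out of $X_{0,0,0}=\{(x,x,x):x\in X\}$, which gives $\lambda^0_{0,0}=|C|$ and $\gamma^0_{0,0}=1$.

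For constraints~(iii) and~(iv) I would use that $C\times C\times C$ is stable under the $S_3$-action permuting the three coordinates of $X\times X\times X$, combined with the orbit classification of Section~3: a coordinate permutation carries $X_{i,j,t}$ bijectively onto the orbit whose parameters come from permuting the unordered triple of pairwise distances $\{i,\,j,\,\partial(y,z)\}$ --- with $\partial(y,z)$ equal to $i+j-2t$ in case~(iii) and to $2D+1-(i+j-2t)$ in case~(iv) --- and then re-solving for the overlap parameter. The bijection forces $\lambda^t_{i,j}=\lambda^{t'}_{i',j'}$ and also $\gamma^t_{i,j}=\gamma^{t'}_{i',j'}$ (equivalently $|X_{i,j,t}|=|X_{i',j',t'}|$), so $x^t_{i,j}=x^{t'}_{i',j'}$ follows. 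For~(v): if $C$ has minimum distance at least $d$, then any three codewords have all three pairwise distances lying in $\{0\}\cup\{d,d+1,\dots,D\}$; but the pairwise distances realised on $X_{i,j,t}$ are exactly $i$, $j$ and $\min\{i+j-2t,\,2D+1-(i+j-2t)\}$, whereas the remaining quantity $\max\{i+j-2t,\,2D+1-(i+j-2t)\}\ge D$ can never lie in $\{1,\dots,d-1\}$ since $d-1<D$; hence the hypothesis forces one of the three genuine pairwise distances into $\{1,\dots,d-1\}$, so $X_{i,j,t}\cap(C\times C\times C)=\emptyset$ and $x^t_{i,j}=0$.

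Only~(ii) uses the averaged matrix $M'$. Nonnegativity is immediate, and for the upper bound I would combine $M'=\sum_{(i,j,t)\in\mathcal{I}'}x^t_{i,j}M^t_{i,j}$ from Proposition~\ref{pro22} with the defining formula $M'=\tfrac{1}{|C|\,|\mathrm{Aut}_{\mathbf{0}}(X)|}\sum_{\sigma:\,\mathbf{0}\in\sigma C}M_{\sigma C}$. Fixing $(i,j,t)\in\mathcal{I}'$ and choosing $y,z\in X$ with $(\mathbf{0},y,z)\in X_{i,j,t}$, the $(y,z)$-entry reads $x^t_{i,j}=\tfrac{1}{|C|\,|\mathrm{Aut}_{\mathbf{0}}(X)|}\,|\{\sigma\in\mathrm{Aut}(X):\mathbf{0},y,z\in\sigma C\}|$, while the $(y,\mathbf{0})$-entry reads $x^0_{i,0}=\tfrac{1}{|C|\,|\mathrm{Aut}_{\mathbf{0}}(X)|}\,|\{\sigma\in\mathrm{Aut}(X):\mathbf{0},y\in\sigma C\}|$, so the inclusion of index sets gives $x^t_{i,j}\le x^0_{i,0}$. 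I expect the delicate point to be not any single estimate but the bookkeeping underlying~(iii)--(iv): one must check that $(i,j,t)$, together with the derived $\partial(y,z)$, is a complete and $S_3$-symmetric invariant of a triple in $X\times X\times X$, so that a coordinate permutation genuinely sends orbits to orbits under the stated re-indexing --- and this is exactly where the distance identity $\partial(y,z)=\min\{i+j-2t,\,2D+1-(i+j-2t)\}$ and the orbit classification for $\square_{2D+1}$ do the real work.
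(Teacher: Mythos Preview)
Your proposal is correct and follows essentially the same approach as the paper: the paper simply says this proposition is proved ``by the argument similar to proofs of Propositions~\ref{pro6} and~\ref{pro4}'', and in Proposition~\ref{pro4} it asserts that (i), (iii)--(v) ``follow directly from the definition of $x^t_{i,j}$'' while giving for (ii) precisely the $M'$-entry comparison you describe. Your write-up supplies the details the paper leaves implicit (the $S_3$-symmetry of orbits for (iii)--(iv), and the observation that $\max\{i+j-2t,\,2D+1-(i+j-2t)\}\ge D+1>d-1$ for (v)), but the underlying argument is the same.
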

Based on  Proposition \ref{pro9}, Theorem \ref{thm1} and Proposition \ref{pro22}, the positive semidefiniteness
of $M'$ and $M''$ is equivalent to
\begin{align}\label{eq28}
\text{for each $r=0,1,\ldots, D$, the matrices\ \ \ \ \ \ \ \ \ \  \ \ \ \ \ }\nonumber\\
\Bigg(\sum_{t}\beta^r_{i,j,t}x^t_{i,j}\Bigg)^{D}_{i,j=r}\\ \text{and}\ \ \Bigg(\sum_{t}\beta^r_{i,j,t}(x^0_{\nu,0}-x^t_{i,j})\Bigg)^{D}_{i,j=r}\label{eq29}
\end{align}
are positive semidefinite, where $\nu=${\rm min}$\{i+j-2t,2D+1-(i+j-2t)\}$.
\begin{thm}\label{thm 8} For $\square_{2D+1}$, the semidefinite programming problem: maximize $\sum^D_{i=0}{2D+1\choose i}x^0_{i,0}$
subject to conditions \eqref{eq23}--\eqref{eq29} is an upper bound on $A(\square_{2D+1},d)$.
\end{thm}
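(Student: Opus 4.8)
The plan is to repeat, almost verbatim, the argument behind Theorems \ref{thm 9} and \ref{thm 6}: starting from an arbitrary code $C$ of $\square_{2D+1}$ with minimum distance at least $d$, I would produce a feasible point of the semidefinite program whose objective value equals $|C|$. Since the program is a maximization, its optimum is then $\geq |C|$ for every such $C$, hence $\geq A(\square_{2D+1},d)$, which is the assertion.

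First I would fix a code $C$ with $0<|C|<|X|$ and minimum distance at least $d$, and form the numbers $\lambda^t_{i,j}$ and $x^t_{i,j}=(|C|\gamma^t_{i,j})^{-1}\lambda^t_{i,j}$ for $(i,j,t)\in\mathcal{I}'$ as in Subsection 5.3. These satisfy the linear constraints \eqref{eq23} (and items (i), (ii), (iv), (v) of the same proposition) by the $\square_{2D+1}$-analogue of Proposition \ref{pro4}; item (v) is exactly where the minimum-distance hypothesis is used, since $\partial(y,z)=\min\{i+j-2t,\,2D+1-(i+j-2t)\}$ forces $\lambda^t_{i,j}=0$ whenever one of $i$, $j$, $\partial(y,z)$ falls in $\{1,\dots,d-1\}$.

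Next I would check the positive semidefiniteness conditions \eqref{eq28} and \eqref{eq29}. The matrices $M'$ and $M''$ attached to $C$ are nonnegative averages of the rank-one positive semidefinite matrices $M_{\sigma C}=\chi_{\sigma C}\chi_{\sigma C}^{\rm T}$, so they are positive semidefinite; being invariant under ${\rm Aut}_{\mathbf{0}}(X)$ they lie in the Terwilliger algebra $T$ of $\square_{2D+1}$, which coincides with the span of the orbit matrices $M^t_{i,j}$, and by Proposition \ref{pro22} their coordinates there are $x^t_{i,j}$ and $\frac{|C|}{|X|-|C|}(x^0_{\nu,0}-x^t_{i,j})$ respectively, with $\nu=\min\{i+j-2t,\,2D+1-(i+j-2t)\}$. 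Conjugating by the orthogonal matrix $U'$ of Theorem \ref{thm1} and invoking Proposition \ref{pro9}, the matrices ${U'}^{\rm T}M'U'$ and ${U'}^{\rm T}M''U'$ become block diagonal: for each $r=0,1,\dots,D$ the corresponding block occurs ${2D+1\choose r}-{2D+1\choose r-1}>0$ times and, in position $(i,j)$, equals ${2D+1-2r\choose i-r}^{-1/2}{2D+1-2r\choose j-r}^{-1/2}$ times $\sum_t\beta^r_{i,j,t}x^t_{i,j}$, respectively $\sum_t\beta^r_{i,j,t}(x^0_{\nu,0}-x^t_{i,j})$, with the $\beta^r_{i,j,t}$ of Proposition \ref{pro5}. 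Conjugating each block by the invertible diagonal matrix $\mathrm{diag}\big({2D+1-2r\choose i-r}^{1/2}\big)_{i=r}^{D}$ preserves positive semidefiniteness, so positive semidefiniteness of $M'$ and $M''$ is equivalent to \eqref{eq28} and \eqref{eq29}; thus $(x^t_{i,j})$ is a feasible point.

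Finally I would evaluate the objective. For each ordered pair $(x,y)\in C\times C$ there is a unique $i$ with $\partial(x,y)=i$, and then $(x,y,x)\in X_{i,0,0}$, so $\sum_{i=0}^{D}\lambda^0_{i,0}=|C|^2$; since $M^0_{i,0}$ is nonzero precisely in the ${2D+1\choose i}$ positions $(x,\mathbf{0})$ with $|x_1|=i$, we have $\gamma^0_{i,0}={2D+1\choose i}$, whence $\sum_{i=0}^{D}{2D+1\choose i}x^0_{i,0}=\frac{1}{|C|}\sum_{i=0}^{D}\lambda^0_{i,0}=|C|$ --- note the absence of a factor $\frac12$ on the $i=D$ term, in contrast with the even case, because for odd $n$ there are no unordered diagonal pairs in $X$. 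Hence the optimum of the program is at least $|C|$, and taking $C$ of maximum size gives the theorem. I expect no genuinely hard step; the one point needing care is the equivalence \emph{``$M'$, $M''$ positive semidefinite $\Leftrightarrow$ all blocks positive semidefinite''} together with the harmless deletion of the positive scalar prefactors, which both rest on $U'$ being a real orthogonal (hence $\ast$-preserving) change of basis, exactly as in the proof of Theorem \ref{thm 9}.
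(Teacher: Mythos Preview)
Your proposal is correct and follows exactly the same approach as the paper, whose proof of this theorem is simply ``Similar to the proof of Theorem~\ref{thm 9}.'' You have in fact supplied a careful, fully expanded version of that argument---constructing a feasible point from an arbitrary code $C$ and verifying that its objective value is $|C|$---so there is nothing to add.
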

\begin{proof}
Similar to the proof of Theorem \ref{thm 9}.
\end{proof}
We remark that the above semidefinite programming problems in Theorems \ref{thm 9}, \ref{thm 6} and \ref{thm 8}  with $O(n^3)$ variables can be solved in time polynomial in $n$.
The obtained new bound is at least as strong as  the Delsarte's linear
programming bound \cite{d}. Indeed, diagonalizing the Bose-Mesner algebra of $\square_{n}$ yields the  Delsarte bound, which is equal to the maximum of $\sum^{\lfloor\frac{n}{2}\rfloor}_{i=0}\gamma^0_{i,0}x^0_{i,0}$ subject to the conditions $x^0_{0,0}=1$, $x^0_{1,0}=\cdots =x^0_{d-1,0}$, $x^0_{d,0}, x^0_{d+1,0},\ldots, x^0_{\lfloor\frac{n}{2}\rfloor,0}\geq 0$ and
\begin{align}\label{eq5}
\sum^{\lfloor\frac{n}{2}\rfloor}_{i=0}x^0_{i,0}A_i\ \  \text{is positive semidefinite,}
\end{align}
 where $A_i$ is the $i$th distance matrix of $\square_{n}$. Note that condition  \eqref{eq5} can be  implied by the condition
 that $M'$ and $M''$ is positive semidefinite.

\subsection{Computational results}
In this subsection we  give, in the range $8\leq n\leq 13$, several concrete semidefinite programming  bounds  and  Delsarte's linear programming bounds on $A(\square_{n},d)$, respectively. The latter involves the  second eigenmatrix of $\square_n$.
\begin{lem} Let $\bar{q}_j(i)\ (0\leq i, j\leq \lfloor \frac{n}{2}\rfloor)$ be the $(i,j)$-entry of this eigenmatrix. Then we have
$\bar{q}_j(i)=\sum^{2j}_{k=0}(-1)^k{i\choose k}{n-i\choose 2j-k}$.
\end{lem}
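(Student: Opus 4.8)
The plan is to identify the primitive idempotents of $\square_n$ with those of the Hamming cube $H(n,2)$ that survive the antipodal quotient. Write $n=2D$ or $n=2D+1$, so $\square_n$ has diameter $D=\lfloor n/2\rfloor$ and $\dim\mathcal{M}=D+1$. Let $\Theta$ be the permutation matrix of $H(n,2)$ induced by the antipodal map $u\mapsto S\setminus u$, and for $0\le\ell\le n$ let $E^{H}_{\ell}$ be the primitive idempotent of $H(n,2)$ onto the weight-$\ell$ Krawtchouk eigenspace; recall from the classical theory of the binary Hamming scheme (see \cite{d},\cite{bcn}) that $(E^{H}_{\ell})_{u,w}=2^{-n}K_{\ell}(|u\triangle w|)$, where $K_{\ell}(x):=\sum_{k}(-1)^{k}\binom{x}{k}\binom{n-x}{\ell-k}$, and that $K_{\ell}(n-x)=(-1)^{\ell}K_{\ell}(x)$ and $\Theta E^{H}_{\ell}=(-1)^{\ell}E^{H}_{\ell}$. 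I also introduce the push-forward $P\colon\mathbb{C}^{VH(n,2)}\to\mathbb{C}^{X}$ and its transpose $P^{*}\colon\mathbb{C}^{X}\to\mathbb{C}^{VH(n,2)}$ given by $(Pg)(\{u,u'\})=g(u)+g(u')$ and $(P^{*}f)(u)=f(\{u,u'\})$, which satisfy $PP^{*}=2I$, $P^{*}P=I+\Theta$ and $P\Theta=P$.

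First I would define, for $0\le j\le D$, the matrix $F_{j}:=|X|^{-1}\sum_{i=0}^{D}K_{2j}(i)\,A_{i}\in\mathcal{M}$, where $A_{i}$ is the $i$th distance matrix of $\square_n$; the assertion to prove is exactly $F_{j}=E_{j}$, since then comparing $E_{j}=|X|^{-1}\sum_{i}\bar q_{j}(i)A_{i}$ with the definition gives $\bar q_{j}(i)=K_{2j}(i)$. Because $2j$ is even, $K_{2j}$ is invariant under $x\mapsto n-x$, and an entrywise check — using that the four numbers $(E^{H}_{2j})_{u,w}$, $(E^{H}_{2j})_{u,w'}$, $(E^{H}_{2j})_{u',w}$, $(E^{H}_{2j})_{u',w'}$ all equal $2^{-n}K_{2j}(|u\triangle w|)=2^{-n}K_{2j}(\partial(\{u,u'\},\{w,w'\}))$ — yields $F_{j}=\tfrac12\,P\,E^{H}_{2j}\,P^{*}$. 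The relations above then give everything formally: $P^{*}PE^{H}_{2j}=(I+\Theta)E^{H}_{2j}=2E^{H}_{2j}$ forces $F_{j}^{2}=F_{j}$ and $F_{j}F_{j'}=\delta_{j,j'}F_{j}$; from $\sum_{j=0}^{D}E^{H}_{2j}=\tfrac12(I+\Theta)$ we get $\sum_{j=0}^{D}F_{j}=\tfrac14 P(I+\Theta)P^{*}=\tfrac14(PP^{*}+P\Theta P^{*})=I$; and ${\rm tr}(F_{j})={\rm tr}(E^{H}_{2j})=\binom{n}{2j}\neq 0$. Hence $F_{0},\dots,F_{D}$ are $D+1$ pairwise orthogonal nonzero symmetric idempotents in the $(D+1)$-dimensional commutative algebra $\mathcal{M}$, so they are precisely its primitive idempotents; the indexing matches the standard one since $F_{0}=|X|^{-1}J=E_{0}$ (and, if desired, the $A_{1}$-eigenvalues $K_{1}(2j)=n-4j$ are strictly decreasing in $j$). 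This gives $\bar q_{j}(i)=K_{2j}(i)=\sum_{k=0}^{2j}(-1)^{k}\binom{i}{k}\binom{n-i}{2j-k}$, as required.

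The step I expect to require the most care is the entrywise identity $F_{j}=\tfrac12 PE^{H}_{2j}P^{*}$: one must unwind the two-element fibre $\{w,w'\}$ and check that the factor $2$ from $PP^{*}=2I$ and the doubling coming from the fibre combine correctly, in particular at the middle entry where $n=2D$ and $|u\triangle w|=D$ (there $w$ and $w'$ are both at distance $D$ from $u$, yet $A_{D}$ has a single $1$ in that position). A slightly longer alternative avoids the quotient: compute the valency $v_{i}=\binom{n}{i}$ (with $v_{D}=\tfrac12\binom{n}{D}$ when $n=2D$) and multiplicity $m_{j}=\binom{n}{2j}$ of $\square_n$ from its known intersection array in \cite{bcn}, observe that $A_{i}$ acts on antipodally invariant functions as the Hamming operator $A_{i}^{H}$ so that its eigenvalue on the $j$th eigenspace is $K_{i}(2j)$ (with the usual halving at $i=D$), and then invoke $\bar q_{j}(i)=(m_{j}/v_{i})\,p_{i}(j)$ together with the Krawtchouk reciprocity $\binom{n}{2j}K_{i}(2j)=\binom{n}{i}K_{2j}(i)$; here the obstacle shifts to establishing that reciprocity, which follows from the generating function $\sum_{i}K_{i}(x)z^{i}=(1-z)^{x}(1+z)^{n-x}$ or can be reduced to Lemma~\ref{lem 2.8}.
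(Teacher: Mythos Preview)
Your argument is correct and takes a genuinely different route from the paper. The paper proceeds by recurrence: it quotes the three-term relation $c_i q_j(i-1)+a_i q_j(i)+b_i q_j(i+1)=\theta_j q_j(i)$ for the second eigenmatrix of any distance-regular graph, observes that for $i<D$ the intersection numbers of $\square_n$ coincide with those of $H(n,2)$ while the eigenvalue $\theta_j$ of $\square_n$ equals $\theta_{2j}$ of $H(n,2)$, and concludes (with matching initial value $q_j(0)=m_j=\binom{n}{2j}$) that $\bar q_j(i)=q^{H}_{2j}(i)=K_{2j}(i)$. Your approach instead realises $\square_n$ as the antipodal quotient of $H(n,2)$ via the push--pull maps $P,P^{*}$ and transports the even-indexed primitive idempotents $E^{H}_{2j}$ directly to the primitive idempotents $F_j=\tfrac12 PE^{H}_{2j}P^{*}$ of the Bose--Mesner algebra of $\square_n$; the identities $P^{*}P=I+\Theta$, $PP^{*}=2I$, $\Theta E^{H}_{2j}=E^{H}_{2j}$ do all the work. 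This is more structural and explains \emph{why} only the even Krawtchouk polynomials appear (they are precisely the ones fixed by the antipodal symmetry), at the cost of a slightly longer setup. The paper's recurrence argument is shorter but leaves the verification at $i=D$, where the intersection numbers differ, implicit; your idempotent argument handles all $i$ uniformly once the entrywise identity $F_j=\tfrac12 PE^{H}_{2j}P^{*}$ is checked. Your caution about the $n=2D$, $i=D$ boundary is well placed but harmless: all four fibre entries of $E^{H}_{2j}$ still equal $2^{-n}K_{2j}(D)$, so no extra factor appears. The ordering remark is also fine, since $A_1^{\square}P=PA_1^{H}$ gives $A_1^{\square}F_j=(n-4j)F_j$, matching the standard decreasing ordering of eigenvalues.
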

\begin{proof}
We first recall the following fact. Let $\Gamma$ denote a distance-regular graph with diameter $D$ and intersection numbers $c_i,a_i,b_i\ (0\leq i\leq D)$. Without loss of generality, we assume its eigenvalues $\theta_0>\theta_1>\cdots >\theta_D$. Let $q_j(i)\ (0\leq i, j\leq D)$ be the $(i,j)$-entry of the  second eigenmatrix of $\Gamma$.
Then we have $c_iq_j(i-1)+a_iq_j(i)+b_iq_j(i+1)=\theta_jq_j(i)$ ($0\leq j\leq D$) by \cite[p. 128]{bcn}.

When $\Gamma$ is $H(n,2)$, it is known that $q_{j}(i)=\sum^{j}_{k=0}(-1)^k{i\choose k}{n-i\choose j-k}\ (0\leq i, j\leq n)$ is the  $(i,j)$-entry of the second  eigenmatrix of $H(n,2)$. Then by comparing   the above identity for $H(n,2)$ with that for $\square_n$, one can easily finds that $\bar{q}_j(i)=q_{2j}(i)\ (0\leq i, j\leq \lfloor \frac{n}{2}\rfloor)$.
\end{proof}

The followings are our computational resuls.\\ [.2cm]
New upper bounds on $A(\square_{2D},d)$\text{\ \ \ \ \ \  \ \ \ \ \ \ \ \ \ \  \ \ \ \ \ \ \ \ \ } New upper bounds on $A(\square_{2D+1},d)$\\
\begin{tabular}{cccc}
\hline
{$ D$}                    &{$d$}  &$\stackrel{\rm  New\ upper}{\rm bound}$ &\text{Delsarte bound} \\\hline
{$4$}   &$2$        &{28}                 &$64$         \\
{$5$}   &$2$                 &{256}                 &$256$\\\hline
{$5$}   &$3$                 &{24}                 &$32$\\
{$6$}   &$3$                 &{87}                 &$128$\\ \hline
{$5$}   &$4$                 &{16}                 &$16$\\
{$6$}   &$4$                 &{54}                 &$85$\\  \hline
\end{tabular}
\ \ \ \ \  \ \ \ \ \ \ \ \ \ \  \ \ \ \ \  \ \ \
\begin{tabular}{cccc}
\hline
{$ D$}                    &{$d$}  &$\stackrel{\rm  New\ upper}{\rm bound}$ &\text{Delsarte bound} \\\hline
{$4$}   &$2$        &{93}                 &$112$         \\
{$6$}   &$2$        &{1348}                 &$1877$         \\ \hline
{$5$}   &$3$                 &{85}                 &$85$\\
{$6$}   &$3$                 &{213}                 &$213$\\ \hline
{$5$}   &$4$                 &{20}                 &$27$\\
{$6$}   &$4$                 &{111}                 &$120$\\ \hline
\end{tabular}

\section*{Acknowledgement}
This work was supported by the NSF of China (No. 11471097) and the NSF of Hebei Province (No. A2017403010).


\begin{thebibliography}{00}


\bibitem{ban} E. Bannai,  T.Ito,  Algebraic Combinatorics I: Association Schemes,
The Benjamin-Cummings Lecture Notes Ser. vol. 58, Benjamin-Cummings, Menlo Park, 1984.

\bibitem{bcn} A.E. Brouwer,   A.M. Cohen, A. Neumaier,  Distance-Regular Graphs,  Springer, Berlin, 1989.
\bibitem{cau}  J.S. Caughman, The  Terwilliger algebra of
bipartite $P$- and $Q$-Polynomial schemes, Discrete Math. 196 (1999) 65--95.

\bibitem{ca}  J.S. Caughman, M.S. MacLean, P.Terwilliger,  The  Terwilliger algebra of
almost bipartite $P$- and $Q$-Polynomial schemes, Discrete Math. 292 (2005) 17--44.

\bibitem{d} P. Delsarte,  An algebraic approach to the association schemes of coding
theory, Philips Res. Repts. Suppl., no. 10, 1973.

\bibitem{g} D. Gijswijt,  Matrix algebras and semidefinite programming techniques for
codes,  Ph.D. thesis, arxiv.1007.0906.

\bibitem{g1} D. Gijswijt, A. Schrijver, H. Tanaka,  New upper bounds for nonbinary codes, J. Combin.
Theory Ser. A 13 (2006) 1719--1731.

 \bibitem{s} A. Schrijver, New code upper bounds from the Terwilliger
 algebra and semidefinite programming, IEEE Trans. Inform. Theory 51 (2005) 2859--2866.

\bibitem{p2} P. Terwilliger,  The subconstituent algebra of an association scheme I, J. Algebraic
Combin.  1 (1992) 363--388.

\bibitem{ter3} P. Terwilliger, The subconstituent algebra of an association scheme III, J.  Algebraic Combin.   2 (1993) 177--210.

\end{thebibliography}
\end{document}